\newcommand{\CC}{\mathbb C}
\newcommand{\EE}{\mathbb E}
\newcommand{\PP}{\mathbb P}
\newcommand{\vir}{\mathrm{vir}}
\DeclareMathOperator{\Aut}{Aut}
\DeclareMathOperator{\Hom}{Hom}
\DeclareMathOperator{\Cont}{Cont}
\DeclareMathOperator{\ch}{ch}
\DeclareMathOperator{\ev}{ev}
\DeclareMathOperator{\St}{St}
\theoremstyle{plain}
\newtheorem{theorem}{Theorem}
\newtheorem{lemma}[theorem]{Lemma}
\newtheorem{proposition}[theorem]{Proposition}
\newtheorem{corollary}[theorem]{Corollary}
\newtheorem{example}[theorem]{Example}
\newtheorem{conjecture}[theorem]{Conjecture}
\theoremstyle{definition}
\newtheorem{definition}[theorem]{Definition}
\newtheorem{remark}[theorem]{Remark}
\newtheorem{question}[theorem]{Question}
\begin{document}

\title[Universal equations from Hodge integrals]{Universal equations for higher genus Gromov--Witten invariants from Hodge integrals}
\author[F.~Janda]{Felix Janda}
\address{University of Illinois Urbana--Champaign, Urbana, IL 61801, USA}
\email{fjanda@illinois.edu}

\author[X.~Wang]{Xin Wang}
\address{School of Mathematics, Shandong University, Jinan, China}
\email{wangxin2015@sdu.edu.cn}

\begin{abstract}
  We establish new universal equations for higher genus Gromov--Witten
  invariants of target manifolds, by studying both the Chern character
  and Chern classes of the Hodge bundle on the moduli space of curves.
  As a consequence, we find new push-forward relations on the moduli
  space of stable curves.
\end{abstract}

\subjclass[2020]{Primary 14N35; Secondary 14N10}
\keywords{Gromov--Witten invariant, universal equation, push-forward relation}

\maketitle	
\tableofcontents
\allowdisplaybreaks

\section{Introduction}

Finding universal partial differential equations for Gromov--Witten invariants of any target manifold is a long-standing problem in Gromov--Witten theory.
Such equations capture fundamental properties for Gromov--Witten invariants.
Usually, it is very difficult to find explicit equations in all genera.
So far, there have been mainly two approaches for finding such universal equations for Gromov--Witten invariants.
One approach is to use tautological relations, in particular \emph{topological recursion relations}, on the moduli space of stable curves.
For example, the equality between the boundary divisors on $\overline{\mathcal{M}}_{0,4}$ leads to the well-known WDVV equation for genus-0 Gromov--Witten invariants.
Another approach is to use tautological relations on the moduli space of stable maps.
For example, in \cite{faber2000hodge}, by using the Grothedieck--Riemann--Roch formula on the moduli space of stable maps, Faber and Pandharipande obtained a family of explicit universal equations for descendant Gromov--Witten invariants.
In 2014, a remarkable formula for double ramification cycles was proposed by Pixton and later proved in \cite{janda2017double}.
In this paper, we use this new ingredient to obtain new universal equations for descendant Gromov--Witten invariants.

\subsection{New universal equations}

Let $X$ be a smooth projective variety and $\{\phi_\alpha: \alpha=1,\dots,N\}$ be a basis of its cohomology ring $H^*(X;\mathbb{C})$ with $\phi_1=\mathbf{1}$ the identity.
Recall that the \emph{big phase space} for Gromov--Witten invariants of $X$ is defined to be $\prod_{n=0}^{\infty}H^*(X;\mathbb{C})$ with standard
basis $\{\tau_{n}(\phi_\alpha): \alpha=1,\dots,N, n\geq0\}$.
Denote the coordinates on the big phase space
with respect to the standard basis by  $\{t_n^\alpha\}$.
Let $F_g$ be the genus-$g$ generating function, which is a formal power
series of $\mathbf{t}=(t_n^\alpha)$  with coefficients being the  genus-$g$ Gromov-Witten invariants.
Denote $\langle\langle\tau_{n_1}(\phi_{\alpha_1}),\dots,\tau_{n_k}(\phi_{\alpha_k})\rangle\rangle_g$ for the derivatives
of $F_g$ with respect to the variables $t_{n_1}^{\alpha_1},\dots,t_{n_k}^{\alpha_k}$.
Similarly, we use $\langle\langle\tau_{n_1}(\phi_{\alpha_1}),\dots,\tau_{n_k}(\phi_{\alpha_k});c_{g,k}\rangle\rangle_g$ to denote  the correlation function of Gromov--Witten theory twisted with a cohomology class $c_{g,k} \in H^*(\overline{\mathcal{M}}_{g,k};\mathbb{C})$.
For convenience, we identify $\tau_n(\phi_\alpha)$ with
the coordinate vector field $\frac{\partial}{\partial t_n^\alpha}$
on $\prod_{n=0}^{\infty}H^*(X;\mathbb{C})$ for $n\geq0$. If $n<0$, $\tau_n(\phi_\alpha)$ is understood to be the $0$
vector field. We also abbreviate $\tau_0(\phi_\alpha)$ by $\phi_\alpha$. Let $\phi^\alpha=\eta^{\alpha\beta}\phi_\beta$  with $(\eta^{\alpha\beta})$
 representing the inverse matrix of the Poincar\'e intersection pairing on $H^*(X;\mathbb{C})$. As a  convention, 
repeated Greek letter indices are summed over their entire range.
Recall 
 the following $T$ operator which was studied in
\cite{liu2002quantum}
\begin{equation}
  \label{eq:T-operator}
  T(W):=\tau_+(W)-\langle\langle W\phi^\alpha\rangle\rangle_0\phi_\alpha
\end{equation}
for any vector field $W$ on the big phase space, where $\tau_+(W)$  is a linear operator defined
by $\tau_{+}(\tau_n(\phi_\alpha))=\tau_{n+1}(\phi_\alpha)$.

Let $A=(a_1,\dots,a_n)$ be a vector with integers $\sum_{i=1}^{n}a_i=0$.  Let $\mathsf{P}_g^g(A)\in H^{2g}(\overline{\mathcal{M}}_{g,n})$ be the cohomology class obtained from  Pixton's double ramification cycle formula\cite{janda2017double}, which is roughly a linear combination of descendant stratum classes.

Our first result is:
\begin{theorem}\label{thm:deg-g-chern-chara} For any $g\geq2$, the following equation holds for pure descendant Gromov-Witten invariants of $X$
\begin{small}
\begin{align}\label{eqn:deg-g-chern-chara}
&-\sum_{n,\alpha}\Tilde{t}_n^\alpha\langle\langle\tau_{n+2g-1}(\phi_\alpha)\rangle\rangle_g+\frac{1}{2}\sum_{i=0}^{2g-2}(-1)^{i}
\left\{\langle\langle\tau_i(\phi_\alpha)\tau_{2g-2-i}(\phi^\alpha)\rangle\rangle_{g-1}
+\sum_{h=0}^{g}\langle\langle\tau_{i}(\phi_{\alpha})\rangle\rangle_h
\langle\langle\tau_{2g-2-i}(\phi^{\alpha})\rangle\rangle_{g-h}\right\}
\nonumber
\\=&\frac{(-1)^{g}g}{2^{2g-1}}
\sum_{m = 1}^{g-1} \frac{1}{m}\sum_{\substack{k_1+\dots+k_m=g\\ k_1,\dots,k_m\geq1}} \sum_{\substack{g_1+\dots+g_m=g-1\\ g_1,\dots,g_m\geq1}}\sum_{\alpha_{1},\dots,\alpha_{m}=1}^{N}
\prod_{i=1}^{m}\frac{-1}{k_i!}\sum_{l_i=0}^{k_i-1}\binom{k_i-1}{l_i}
\langle\langle T^{l_i}(\phi_{\alpha_{i-1}})T^{k_i-1-l_i}(\phi^{\alpha_i});\mathsf{P}_{g_i}^{g_i}(0,0)\rangle\rangle_{g_i}
\end{align}    
\end{small}
where $\tilde{t}_n^\alpha=t_n^\alpha-\delta_{n}^{1}\delta_\alpha^{1}$ and $\alpha_0=\alpha_m$. 
\end{theorem}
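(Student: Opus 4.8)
The plan is to realize \eqref{eqn:deg-g-chern-chara} as the Gromov--Witten incarnation of a single tautological identity on $\overline{\mathcal M}_{g,n}$ for the degree-$(2g-1)$ component $\ch_{2g-1}(\EE)$ of the Chern character of the Hodge bundle $\EE$, obtained by comparing two expressions for this class. First I would invoke Mumford's Grothendieck--Riemann--Roch computation,
\[
\ch_{2g-1}(\EE)=\frac{B_{2g}}{(2g)!}\Bigl(\kappa_{2g-1}-\sum_{j}\psi_j^{2g-1}+\tfrac12\sum_{\Gamma}(\iota_\Gamma)_*\sum_{i=0}^{2g-2}(-1)^i(\psi')^i(\psi'')^{2g-2-i}\Bigr),
\]
where $\Gamma$ ranges over the boundary strata and $\psi',\psi''$ are the cotangent classes at the two branches of the node. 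Pulling this back along the stabilization morphism $\St\colon\overline{\mathcal M}_{g,n}(X,\beta)\to\overline{\mathcal M}_{g,n}$, capping with the virtual class and summing over $\beta$ and over the number of marked points produces the left-hand side: the $\sum_j\psi_j^{2g-1}$ term gives the first descendant term with its minus sign, the $\kappa_{2g-1}$ term is absorbed into the same correlator through $\kappa_a=\pi_*(\psi^{a+1})$ together with the string and dilaton equations (the origin of the dilaton shift $\tilde t_n^\alpha=t_n^\alpha-\delta_n^1\delta_\alpha^1$), and the boundary sum splits, by the Gromov--Witten splitting axiom, into the non-separating piece $\langle\langle\tau_i(\phi_\alpha)\tau_{2g-2-i}(\phi^\alpha)\rangle\rangle_{g-1}$ and the separating piece $\sum_h\langle\langle\tau_i(\phi_\alpha)\rangle\rangle_h\langle\langle\tau_{2g-2-i}(\phi^\alpha)\rangle\rangle_{g-h}$, with the alternating signs $(-1)^i$ inherited directly from Mumford.

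The right-hand side I would obtain from the second expression for $\ch_{2g-1}(\EE)$, built from the Chern \emph{classes} of $\EE$. Since $\ch_{2k}(\EE)=0$ for every $k\ge1$, the even-degree vanishing lets me pass freely between the Chern character and the total Chern class of $\EE$, and Pixton's formula expresses the Hodge class appearing at each genus $g_i$ as the double ramification insertion $\mathsf P_{g_i}^{g_i}(0,0)$, whose two marked points, both with vanishing ramification, correspond to the two half-edges of a node. Assembling the genus-$g_i$ twisted correlators into a matrix-valued series indexed by the cohomology basis $\phi_\alpha$ and its $\eta$-dual $\phi^\alpha$, the degree-$(2g-1)$ component is extracted as a logarithm of that series; the expansion $-\sum_m\frac1m\operatorname{tr}(M^m)$ is precisely the source of the cyclic weight $\tfrac1m$, the product $\prod_i$, and the closure condition $\alpha_0=\alpha_m$. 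The powers of the cotangent classes at the half-edges are raised by $\tau_+$, which becomes the $T$-operator of \eqref{eq:T-operator} once the genus-zero correction is incorporated, distributed over the two sides of each node as $T^{l_i}$ and $T^{k_i-1-l_i}$; the binomial $\binom{k_i-1}{l_i}$ and the factor $1/k_i!$ record this distribution, while the Bernoulli prefactors $B_{2g}/(2g)!$ carried by both Mumford's formula and Pixton's cancel, leaving the rational constant $(-1)^gg/2^{2g-1}$.

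A useful internal check along the way is the codimension count: the double ramification classes contribute $\sum_i g_i=g-1$, the descendant raisings contribute $\sum_i(k_i-1)=g-m$, and the $m$ nodes of the necklace contribute $m$, for a total of $2g-1$, matching the degree of $\ch_{2g-1}(\EE)$. The step I expect to be the main obstacle is the precise combinatorial reconciliation of the two expansions --- matching the necklace decomposition coming from $\operatorname{tr}\log$ against the stratum-by-stratum structure of Pixton's double ramification cycle --- while simultaneously verifying that the genus bookkeeping $\sum_i g_i=g-1$ and the descendant bookkeeping $\sum_i k_i=g$ remain compatible and that all Bernoulli and factorial factors assemble into the single constant $(-1)^gg/2^{2g-1}$. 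Establishing the cyclic weight $\tfrac1m$ rigorously, rather than the ordered-composition weight $\tfrac1{m!}$, and controlling the genus-zero $T$-operator corrections uniformly in the necklace length $m$, is where the bulk of the technical work will lie.
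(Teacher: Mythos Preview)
Your treatment of the left-hand side is correct and matches the paper: Faber--Pandharipande's GRR formula at $l=g$, restricted to $\mathbf s=0$, produces exactly the left side of \eqref{eqn:deg-g-chern-chara} as $\frac{(2g)!}{B_{2g}}\langle\langle\ch_{2g-1}(\EE)\rangle\rangle_g$.

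The right-hand side, however, has a genuine gap. You gesture at ``passing between the Chern character and the total Chern class'' and at a trace--log expansion $-\sum_m\frac1m\operatorname{tr}(M^m)$ as the source of the cyclic weight and the necklace structure, but this is not the mechanism, and as written there is no clear path from your description to the formula. The paper's argument rests on a specific algebraic identity you never invoke:
\[
\ch_{2g-1}(\EE)=\frac{(-1)^{g-1}}{(2g-1)!}\,\lambda_g\lambda_{g-1},
\]
obtained as the $t^{2g-2}$-coefficient of the Newton relation $\sum_{k\ge1}(-1)^{k-1}k!\,\ch_k(\EE)\,t^{k-1}=c(\EE^*)\cdot c'(\EE)$. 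This reduces the problem to computing $\lambda_g\lambda_{g-1}$ on $\overline{\mathcal M}_g$.

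The necklace structure then arises \emph{geometrically}, not from a trace--log on the Gromov--Witten side. One applies Pixton's formula $\lambda_g=\frac{(-1)^g}{2^g}\mathsf P_g^g(\emptyset)$, which is a sum over \emph{all} stable graphs $\Gamma\in G_{g,0}$, and multiplies by $\lambda_{g-1}$. The restriction behaviour $\iota_\Gamma^*\EE_g\cong\bigoplus_v\EE_{g(v)}\oplus\mathcal O^{h^1(\Gamma)}$ then forces two vanishings: graphs with a separating edge already contribute zero to $\mathsf P_g^g(\emptyset)$ (with $A=\emptyset$ the only admissible weighting on such an edge is $0$, killing the edge factor), and graphs with $h^1(\Gamma)\ge2$ are annihilated by $\lambda_{g-1}$, since $\sum_v g(v)\le g-2$ would require a factor $\lambda_i$ with $i>g(v)$ on some vertex. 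Only the circular (one-loop, no separating edge) graphs survive, and on each vertex $\lambda_{g-1}$ restricts to $\lambda_{g(v)}$, to which one applies Pixton a second time to produce the insertion $\mathsf P_{g(v)}^{g(v)}(0,0)$. The cyclic weight $\frac1{2m}$ is the order of the automorphism group of a length-$m$ necklace once vertex and edge data are summed over ordered tuples $(g_1,\dots,g_m)$, $(k_1,\dots,k_m)$; it is not a logarithmic artifact. The Bernoulli number $B_{2g}$ enters once from GRR and once (via Faulhaber's formula applied to $\frac1r\sum_{a=1}^{r-1}a^g(a-r)^g$) from the $r$-regularization in Pixton's formula, and these cancel to leave the constant $(-1)^gg/2^{2g-1}$.

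Your use of the $T$-operator to pass from ancestors to descendants and your codimension count are both correct ingredients. But without the $\lambda_g\lambda_{g-1}$ identity and the vanishing argument isolating the circular graphs, the derivation of the right-hand side is incomplete.
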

Together with \cite[Theorem 4]{liu2011new}, Theorem~\ref{thm:deg-g-chern-chara} implies the following identity:
\begin{corollary}
  \label{cor:LP-push-g}
  For any $g\geq2$, the following equation holds for the pure
  descendant Gromov-Witten invariants of $X$
  \begin{small}
\begin{align}\label{eqn:LP-push-g}
&\sum_{i=0}^{2g-2}(-1)^{i}
\langle\langle T^i(\phi_\alpha)T^{2g-2-i}(\phi^\alpha)\rangle\rangle_{g-1}
\nonumber
\\=&\frac{(-1)^{g}g}{2^{2g-2}}
\sum_{m = 1}^{g-1} \frac{1}{m}\sum_{\substack{k_1+\dots+k_m=g\\ k_1,\dots,k_m\geq1}} \sum_{\substack{g_1+\dots+g_m=g-1\\ g_1,\dots,g_m\geq1}}
\sum_{\alpha_1,\dots,\alpha_m=1}^{N}\prod_{i=1}^{m}\frac{-1}{ k_i!}\sum_{l_i=0}^{k_i-1}\binom{k_i-1}{l_i}
\langle\langle T^{l_i}(\phi_{\alpha_{i-1}})T^{k_i-1-l_i}(\phi^{\alpha_i});\mathsf{P}_{g_i}^{g_i}(0,0)\rangle\rangle_{g_i}.
\end{align}  \end{small}
\end{corollary}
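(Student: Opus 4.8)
The plan is to deduce the corollary by feeding the descendant reformulation provided by \cite[Theorem 4]{liu2011new} into Theorem~\ref{thm:deg-g-chern-chara}. The role of \cite[Theorem 4]{liu2011new} is to rewrite the alternating two-point $T$-operator correlator appearing on the left-hand side of \eqref{eqn:LP-push-g} entirely in terms of $\tau$-descendant data; translated into the present notation, it furnishes the identity
\begin{align*}
&\sum_{i=0}^{2g-2}(-1)^i\langle\langle T^i(\phi_\alpha)T^{2g-2-i}(\phi^\alpha)\rangle\rangle_{g-1}
= -2\sum_{n,\alpha}\tilde t_n^\alpha\langle\langle\tau_{n+2g-1}(\phi_\alpha)\rangle\rangle_g \\
&\quad + \sum_{i=0}^{2g-2}(-1)^i\langle\langle\tau_i(\phi_\alpha)\tau_{2g-2-i}(\phi^\alpha)\rangle\rangle_{g-1}
+ \sum_{i=0}^{2g-2}(-1)^i\sum_{h=0}^{g}\langle\langle\tau_i(\phi_\alpha)\rangle\rangle_h\langle\langle\tau_{2g-2-i}(\phi^\alpha)\rangle\rangle_{g-h}.
\end{align*}
The point to record is that the right-hand side of this identity is exactly twice the left-hand side of \eqref{eqn:deg-g-chern-chara}.

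Next I would compare the two right-hand sides. The right-hand side of \eqref{eqn:LP-push-g} is obtained from that of \eqref{eqn:deg-g-chern-chara} simply by replacing the prefactor $\frac{(-1)^g g}{2^{2g-1}}$ with $\frac{(-1)^g g}{2^{2g-2}}$, while every nested summation, every combinatorial weight $\frac{1}{m}\prod_i\frac{-1}{k_i!}\binom{k_i-1}{l_i}$, and every twisted correlator $\langle\langle T^{l_i}(\phi_{\alpha_{i-1}})T^{k_i-1-l_i}(\phi^{\alpha_i});\mathsf P_{g_i}^{g_i}(0,0)\rangle\rangle_{g_i}$ is left unchanged. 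Hence the right-hand side of \eqref{eqn:LP-push-g} is precisely twice the right-hand side of \eqref{eqn:deg-g-chern-chara}.

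The argument then closes with a three-term chain of equalities. By the identity above, the left-hand side of \eqref{eqn:LP-push-g} equals twice the left-hand side of \eqref{eqn:deg-g-chern-chara}; by Theorem~\ref{thm:deg-g-chern-chara} the latter equals twice the right-hand side of \eqref{eqn:deg-g-chern-chara}; and by the comparison of prefactors this equals the right-hand side of \eqref{eqn:LP-push-g}. The double ramification contributions $\mathsf P_{g_i}^{g_i}(0,0)$ never need to be touched, since they are common to both equations and pass through the substitution intact.

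The main obstacle I anticipate is purely a matter of matching conventions rather than new geometry. One must check that the sign and normalization choices in \cite[Theorem 4]{liu2011new} agree with those used here, and in particular that expanding each $T^i(\phi_\alpha)$ through the defining formula \eqref{eq:T-operator} into the descendant basis reproduces exactly the dilaton-shifted one-point term $-2\sum_{n,\alpha}\tilde t_n^\alpha\langle\langle\tau_{n+2g-1}(\phi_\alpha)\rangle\rangle_g$ together with the genus-splitting products, with the factor of $2$ correctly tracked. Once these conventions are aligned, the corollary follows immediately.
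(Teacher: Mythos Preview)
Your overall strategy—multiply Theorem~\ref{thm:deg-g-chern-chara} by $2$ and identify the result with \eqref{eqn:LP-push-g}—matches the paper's, but the identity you attribute to \cite[Theorem~4]{liu2011new} is not what that theorem states, and the verification you propose would not establish it.

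In the paper, \cite[Theorem~4]{liu2011new} is the \emph{vanishing}
\[
0=-\sum_{n,\alpha}\tilde t_n^\alpha\langle\langle\tau_{n+2g-1}(\phi_\alpha)\rangle\rangle_g+\frac{1}{2}\sum_{i=0}^{2g-2}(-1)^{i}\sum_{h=0}^{g}\langle\langle\tau_{i}(\phi_\alpha)\rangle\rangle_h\langle\langle\tau_{2g-2-i}(\phi^\alpha)\rangle\rangle_{g-h}.
\]
Subtracting this from \eqref{eqn:deg-g-chern-chara} leaves only $\tfrac12\sum_i(-1)^i\langle\langle\tau_i(\phi_\alpha)\tau_{2g-2-i}(\phi^\alpha)\rangle\rangle_{g-1}$ on the left. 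A \emph{second, separate} input, \cite[Proposition~3.2]{liu2011certain}, then converts $\tau$'s to $T$'s via
\[
\sum_{i=0}^{m}(-1)^{i}P(\tau_i(\phi_\alpha))Q(\tau_{m-i}(\phi^\alpha))=\sum_{i=0}^{m}(-1)^{i}P(T^i(\phi_\alpha))Q(T^{m-i}(\phi^\alpha)).
\]
Your displayed identity is precisely the conjunction of these two facts, not a single cited theorem.

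More importantly, your proposed check—expanding each $T^i(\phi_\alpha)$ through \eqref{eq:T-operator}—cannot produce the genus-$g$ one-point terms $\langle\langle\tau_{n+2g-1}(\phi_\alpha)\rangle\rangle_g$ or the mixed-genus products $\langle\langle\tau_i\rangle\rangle_h\langle\langle\tau_{2g-2-i}\rangle\rangle_{g-h}$. The operator $T$ only inserts genus-$0$ data, so a genus-$(g-1)$ bracket in $T$'s expands purely into genus-$(g-1)$ brackets (with genus-$0$ coefficients). Those higher-genus terms appear on your right-hand side only because the Liu--Pandharipande vanishing asserts they sum to zero and can be added for free; this is the substantive content of \cite[Theorem~4]{liu2011new}, not a normalization convention. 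So what you flag as the ``main obstacle'' is in fact impossible as stated, and the missing piece is the $T$--$\tau$ switch from \cite{liu2011certain}.
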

Proposition~\ref{prop:TaTb} in Section~\ref{subsec:Reformulation of Theorem 3} explains how to view the right hand side in \eqref{eqn:deg-g-chern-chara} and \eqref{eqn:LP-push-g} in terms of pure descendant Gromov-Witten invariants.
\begin{remark}
  In \cite{faber2000hodge}, it was pointed out that
  Faber--Pandharipande's equations are related to the genus-0
  $\widetilde{L}_n$-constraints proposed by Eguchi-Hori-Xiong in
  \cite{eguchi1997quantum}.
  However, the explicit expression of the higher genus
  $\widetilde{L}_n$-constraints remain unknown.
  We hope that \eqref{eqn:deg-g-chern-chara} will give a hint at the
  explicit form of the higher genus corrections to the
  $\widetilde{L}_n$-constraints.
\end{remark}
\subsection{A new push-forward relation}
It  is very interesting to find   nontrivial classes in the kernel
of the boundary push-forward map
\[\iota\colon R^*(\overline{\mathcal{M}}_{g,2})\rightarrow R^*(\overline{\mathcal{M}}_{g+1})\]
which can also be written in terms of $\psi$ classes and boundary classes.
Via the splitting principle in genus $g+1$ Gromov-Witten theory, this yields universal equations
in genus $g$ from linear combinations of descendant boundary classes in the kernel of $\iota_*$.
Such tautological relations were first constructed by Liu--Pandharipande in $R^{2g+r+1}(\overline{\mathcal{M}}_{g+1})$ for $r\geq1$ (cf. \cite[Theorem 2]{liu2011new}).  
It is remarkable that the equation
in Corollary~\ref{cor:LP-push-g} can be lifted to cycle level, which gives an extension  of Liu--Pandharipande's relation to the case of $r=0$.
\tikz{\coordinate (A) at (0,0); \coordinate (B) at (1,0); \coordinate (C) at (0.6,0.5);}

\tikzset{baseline=0, label distance=-3mm}
\def\NC{\draw (0,0.25) circle(0.25);}
\def\NL{\draw plot [smooth,tension=1.5] coordinates {(0,0) (-0.2,0.5) (-0.5,0.2) (0,0)};}
\def\NR{\draw plot [smooth,tension=1.5] coordinates {(0,0) (0.2,0.5) (0.5,0.2) (0,0)};}
\def\NN{\NL\NR}
\def\NNN{\NN \begin{scope}[rotate=180] \NR \end{scope}}
\def\NNNN{\NN \begin{scope}[rotate=180] \NN \end{scope}}
\def\NRS{\begin{scope}[shift={(B)}] \NR \end{scope}}
\def\NRD{\begin{scope}[rotate around={-90:(B)}] \NRS \end{scope}}
\def\DE{\draw plot [smooth,tension=1] coordinates {(0,0) (0.5,0.15) (1,0)}; \draw plot [smooth,tension=1.5] coordinates {(0,0) (0.5,-0.15) (1,0)};}
\def\DES{\begin{scope}[shift={(B)}] \DE \end{scope}}
\def\TE{\DE \draw (A) -- (B);}
\def\QE{\DE \draw plot [smooth,tension=1] coordinates {(A) (0.5,0.05) (B)}; \draw plot [smooth,tension=1.5] coordinates {(A) (0.5,-0.05) (B)};}
\def\T{\draw (0.2,0) -- (C) -- (B) -- (0.2,0);}
\def\TT{\draw (C) -- (B) -- (0.2,0); \draw plot [smooth,tension=1] coordinates {(0.2,0) (0.3,0.3) (C)}; \draw plot [smooth,tension=1] coordinates {(0.2,0) (0.5,0.2) (C)};}
\newcommand{\nn}[3]{\draw (#1)++(#2:3mm) node[fill=white,fill opacity=.85,inner sep=0mm,text=black,text opacity=1] {$\substack{\psi^#3}$};}
\renewcommand{\ggg}[2]{\fill (#2) circle(1.3mm) node {\color{white}$\substack #1$};}
\begin{definition}
  A stable graph $\Gamma$ in the sense of \cite[\S4.2]{pandharipande2015calculus} is a \emph{stable circular graph} if its vertices are connected in a closed chain.
  We denote the set of circular graphs by $G_{g, n}^\circ$.
\end{definition}
Any stable circular graph $\Gamma$ has the same number of vertices and edges.
For example, the following is a list of all stable circular graphs in $G_{4,0}^\circ$:
\begin{align*}
&\tikz{\NC \ggg{3}{A}}
, \quad \tikz{\DE \ggg{1}{A} \ggg{2}{B}},\quad  \tikz{\T \ggg{1}{B} \ggg{1}{0.2,0} \ggg{1}{C}}.
\end{align*}
\begin{theorem} 
\label{thm:LP-push-g-class}For $g\geq1$, the following topological recursion relation holds in  $R^{2g+1}(\overline{\mathcal{M}}_{g+1})$
\begin{align}\label{eqn:LP-push-g-class}
&\frac{2^{2g-1}}{(-1)^{g+1}(g+1)}\sum_{a+b=2g}(-1)^{a} \iota_*({\psi_{1}}^{a}{\psi_{2}}^{b})\nonumber
\\=&
\sum_{\Gamma\in G_{g,0}^\circ}\frac{1}{|\Aut(\Gamma)|}\sum_{\substack{\sum_{e\in E(\Gamma)}k(e)=g\\ k(e)\geq1, \forall e\in E(\Gamma)}}
(\xi_{\Gamma})_*\left(\prod_{e\in E(\Gamma)}\left(\frac{-1}{k(e)}\sum_{l(e)+m(e)=k(e)-1}\frac{\psi^{l(e)}_{h(e)}}{l(e))!}\frac{\psi_{h'(e)}^{m(e)}}{m(e)!}
 \right) \prod_{v\in V(\Gamma)}\mathsf{P}_{g(v)}^{g(v)}(0,0)\right)
\end{align}
where $\iota\colon \overline{\mathcal{M}}_{g,2} \to \overline{\mathcal M}_{g+1}$ and $\xi_{\Gamma}\colon \overline{\mathcal{M}}_{\Gamma}\rightarrow \overline{\mathcal{M}}_{g+1}$ are  the canonical gluing maps. $h(e)$ and $h'(e)$ denote the two half edges of $e$. 
\end{theorem}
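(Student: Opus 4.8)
The plan is to read off \eqref{eqn:LP-push-g-class} as the cycle-level relation whose Gromov--Witten realization is Corollary~\ref{cor:LP-push-g}, using the splitting-principle dictionary of Liu--Pandharipande between universal equations and tautological relations in the kernel of $\iota_*$; the present statement is then the $r=0$ extension of \cite[Theorem~2]{liu2011new}. To carry it out, I would first identify the left-hand side. For the non-separating gluing map $\iota\colon\overline{\mathcal M}_{g,2}\to\overline{\mathcal M}_{g+1}$, the branch polynomial $\sum_{a+b=2g}(-1)^a\psi_1^a\psi_2^b$ is exactly the combination entering the irreducible-boundary term of Mumford's Grothendieck--Riemann--Roch formula for $\mathrm{ch}_{2g+1}(\mathbb{E})$ on $\overline{\mathcal M}_{g+1}$. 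Since $\overline{\mathcal M}_{g+1}$ has no markings, the $\psi_i^{2g+1}$ contributions vanish, so up to the Bernoulli factor $B_{2g+2}/(2g+2)!$ and the coefficient $\tfrac12$ of Mumford's boundary term, $\iota_*\big(\sum_{a+b=2g}(-1)^a\psi_1^a\psi_2^b\big)$ is the irreducible-boundary part of $\mathrm{ch}_{2g+1}(\mathbb{E})$. These constants combine to the prefactor $2^{2g-1}/((-1)^{g+1}(g+1))$ on the left, which is the cycle-level avatar of the realization $\sum_{i}(-1)^i\langle\langle T^i(\phi_\alpha)T^{2g-2-i}(\phi^\alpha)\rangle\rangle$ on the left of \eqref{eqn:LP-push-g}, with $T\leftrightarrow\psi$ the standard descendant correspondence.

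Next I would match the right-hand side to that of \eqref{eqn:LP-push-g} through Proposition~\ref{prop:TaTb}. Each factor $\langle\langle T^{l_i}(\phi_{\alpha_{i-1}})\,T^{k_i-1-l_i}(\phi^{\alpha_i});\mathsf P_{g_i}^{g_i}(0,0)\rangle\rangle_{g_i}$ is the realization of a genus-$g_i$ vertex carrying Pixton's class $\mathsf P_{g_i}^{g_i}(0,0)$ and two $\psi$-powers produced by the $T$-operators, while the cyclic contraction $\alpha_0=\alpha_m$ together with $\prod_{i=1}^m$ is the realization of $(\xi_\Gamma)_*$ for a stable circular graph $\Gamma\in G_{g,0}^\circ$. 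I would then check that the summations $\tfrac1m\sum_m\sum_{k_i}\sum_{g_i}\sum_{\alpha_i}$ reassemble into $\sum_{\Gamma}\tfrac{1}{|\Aut(\Gamma)|}\sum_{\sum_e k(e)=g}$, and that the factors $\tfrac{-1}{k_i!}\binom{k_i-1}{l_i}$ turn into the edge weights $\tfrac{-1}{k(e)}\sum_{l+m=k(e)-1}\tfrac{\psi_{h(e)}^{l}}{l!}\tfrac{\psi_{h'(e)}^{m}}{m!}$; here the genus constraint $g_i\ge1$, enforced at cycle level by $\mathsf P_{g_i}^{g_i}(0,0)$, is what forces every vertex to consume genus and the chain to close.

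Finally, I would upgrade the matching to an equality of classes: a $\mathbb C$-linear combination of descendant boundary classes lies in the kernel of $\iota_*$ precisely when its Gromov--Witten realization vanishes for every smooth projective $X$. Both sides of \eqref{eqn:LP-push-g-class} realize, for all $X$, to the two sides of \eqref{eqn:LP-push-g}, whose equality is Corollary~\ref{cor:LP-push-g}; hence their difference realizes to $0$ for all $X$ and is a tautological relation, namely \eqref{eqn:LP-push-g-class}. The hard part will be this last step together with the bookkeeping of the previous one: controlling the genus-$0$ correction $-\langle\langle W\phi^\alpha\rangle\rangle_0\phi_\alpha$ inside the $T$-operator and the string/dilaton-type terms at cycle level, and ensuring that only circular graphs survive, so that neither tree-type graphs nor the interior $\kappa_{2g+1}$ and separating-boundary contributions of Mumford's formula reappear, with the automorphism and symmetry factors $\tfrac{1}{|\Aut(\Gamma)|}$ and $\tfrac1m$ matching exactly.
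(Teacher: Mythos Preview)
Your approach has a genuine gap in the final ``upgrading'' step. You assert that because the Gromov--Witten realization of the difference of the two sides of \eqref{eqn:LP-push-g-class} vanishes for every smooth projective $X$ (via Corollary~\ref{cor:LP-push-g}), that difference must itself vanish as a tautological class. This implication is not known: the vanishing of $\int_{\overline{\mathcal{M}}_{g+1}} D \cdot \Omega^{\mathbf{t}}_{g+1,0}$ for every CohFT $\Omega$ does not force $D = 0$ in $R^*(\overline{\mathcal{M}}_{g+1})$. The logical flow runs the other way---Theorem~\ref{thm:LP-push-g-class} implies Corollary~\ref{cor:LP-push-g}, as the paper states explicitly just after the theorem---and one cannot lift a universal equation to a cycle-level identity without an independent argument in the tautological ring.

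The paper's proof avoids this by never leaving the cycle level. The key input you are missing is Proposition~\ref{prop:lambda-g-lambda-g-1}, which already expresses $\lambda_{g+1}\lambda_g$ on $\overline{\mathcal{M}}_{g+1}$ as (a constant times) the right-hand side of \eqref{eqn:LP-push-g-class}. For the left-hand side, you correctly isolate the irreducible-boundary piece of Mumford's formula for $\ch_{2g+1}(\mathbb{E})$, but the $\kappa_{2g+1}$ term and the separating-boundary terms do \emph{not} vanish automatically, and you provide no mechanism for their cancellation beyond flagging it as ``the hard part''. The paper handles this by pushing forward \cite[Proposition~2]{liu2011new}, namely the relation~\eqref{eqn:LP-prop2} on $\overline{\mathcal{M}}_{g+1,2}$, along the map forgetting both markings; via the string equation this produces exactly $\kappa_{2g+1}+\tfrac12\sum j_*(\ldots)=0$, cancelling the unwanted terms in Mumford's formula. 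What remains is
\[
\frac{(2g+2)!}{B_{2g+2}}\ch_{2g+1}(\mathbb{E}) \;=\; \frac{1}{2}\,\iota_*\Big(\sum_{a+b=2g}(-1)^a\psi_1^a\psi_2^b\Big),
\]
and combining this with \eqref{eqn:ch-2g-1-hodge} and Proposition~\ref{prop:lambda-g-lambda-g-1} finishes the proof, with no appeal to Gromov--Witten theory at all.
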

Theorem~\ref{thm:LP-push-g-class} implies that
Equation~\eqref{eqn:LP-push-g} of Corollary~\ref{cor:LP-push-g} holds
for any cohomological field theory, as defined by Kontsevich and Manin
in \cite{kontsevich1994gromov}, in particular also for orbifold
Gromov-Witten theory and quantum singularity theory
(cf. \cite{chen2001orbifold}, \cite{fan2013witten}).
\subsection{Universal relations from Hodge class}
It was first mentioned in \cite{faber2000hodge} that a more sophisticated method of obtaining differential equations for  pure descendant Gromov-Witten invariants from
Faber--Pandharipande's formula (cf. \cite[Proposition 2]{faber2000hodge}) is to construct differential operators that correspond to multiplying by a Chern class of the Hodge bundle $\mathbb{E}$.
The Chern classes of Hodge bundle $\mathbb{E}$ certainly vanish
in degrees greater than $g$.
One obtains from \cite[Proposition~2]{faber2000hodge}) (Equation \eqref{eqn:FP-prop2} in this paper) relations
in degree greater than g (for each g).
It would be interesting to understand
these equations and their relation to topological recursion relations and the Virasoro constraints even
when $X$ is a point.

Let $z$ be a formal variable. We consider the space $\mathbb{H}_{+}$ which is the space of polynomials in one variable $z$ with coefficients in $H^*(X;\mathbb{C})$. Its has a canonical linear basis $\{z^n\phi_{\alpha}:\alpha=1,\dots,N, n\geq0\}$. Denote the coordinates on  $\mathbb{H}_{+}$ with respect to the canonical basis by $\{q_n^\alpha\}$. Then a general element in $\mathbb{H}_{+}$ can be written in the form $\mathbf{q}(z)=\sum_{n\geq0,\alpha}q_n^\alpha \phi_\alpha z^n$. For convenience, we identify $\mathbb{H}_{+}$ with the big phase space $\prod_{n=0}^{\infty}H^*(X;\mathbb{C})$ via identification between basis $\{z^{n}\phi_\alpha\}$ and $\{\tau_n(\phi_\alpha)\}$, and dilaton shift about coordinates $q_n^\alpha=\tilde{t}_n^\alpha$.
We will use the following convention
\begin{align*}
\widetilde{\mathcal{P}}(\mathbf{q})=\mathcal{P}(\mathbf{t})|_{q_n^\alpha=\Tilde{t}_n^\alpha}    
\end{align*}
for any partition function $\mathcal{P}(\mathbf{t})$.
Denote $\langle\langle\tau_{n_1}(\phi_{\alpha_1}),\dots,\tau_{n_k}(\phi_{\alpha_k})\rangle\rangle^{\sim}_g(\mathbf{q}(z))$ for the derivatives
of $\widetilde{F}_g(\mathbf{q})=F_g(\mathbf{t})_{q_n^\alpha=\Tilde{t}_n^\alpha}$ with respect to the variables $q_{n_1}^{\alpha_1},\dots,q_{n_k}^{\alpha_k}$.
Similarly, we use $\langle\langle\tau_{n_1}(\phi_{\alpha_1}),\dots,\tau_{n_k}(\phi_{\alpha_k});c_{g,k}\rangle\rangle^{\sim}_g(\mathbf{q}(z))$ to denote  the correlation function of Gromov--Witten theory twisted with a cohomology class $c_{g,k} \in H^*(\overline{\mathcal{M}}_{g,k};\mathbb{C})$.

\begin{theorem}[Theorem~\ref{thm:Hodge-class-relation}]
  \label{thm:intro-Hodge-class-relation}
  Let
  \begin{equation*}
    R(z)=\exp(\sum_{i=1}^{\infty}\frac{B_{2i}}{(2i)(2i-1)}u^{2i-1}z^{2i-1}).
  \end{equation*}
  Then, for $g\geq2$, the following holds for descendant Gromov--Witten invariants
  \begin{align*}
  [u^i]\sum_{\Gamma\in G^{\mathrm{Feyn}}_{g}}\frac{1}{|\Aut(\Gamma)|}\Cont_{\Gamma}
  =\begin{cases}
0, &i>g
\\
\frac{(-1)^g}{2^g}\cdot\langle\langle \,\,;\mathsf{P}_{g}^g(\emptyset)\rangle\rangle_g^{\sim}(\mathbf{q}(z))
&i=g
\end{cases}
 \end{align*}
where $[u^i]$ denotes taking the coefficient of $u^i$ and the contribution $\Cont_{\Gamma}$ of a Feynman graph $\Gamma$ is defined as a contraction of tensors:
\begin{itemize}
\item{Each vertex $v$ is assigned a tensor bracket
$$\langle\langle\quad\rangle\rangle_{g(v)}^{\sim}(R^{-1}\mathbf{q})$$
with insertion coming from the half edges.}
\item{Each edge $e$ is assigned a bivector with descendants $\frac{\sum_{\alpha,\beta}\left(\eta^{\alpha\beta}\phi_{\alpha}\otimes\phi_{\beta}-\eta^{\alpha\beta} R^{-1}(\psi_{h(e)})(\phi_{\alpha})\otimes R^{-1}(\psi_{h'(e)})(\phi_{\beta})\right)
}{\psi_{h(e)}+\psi_{h'(e)}}$}
\end{itemize}
\end{theorem}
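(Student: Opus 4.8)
The plan is to recognize the quantity $\sum_{\Gamma\in G^{\mathrm{Feyn}}_{g}}\frac{1}{|\Aut(\Gamma)|}\Cont_{\Gamma}$ as the graph-sum formula for the Givental--Teleman $R$-matrix action applied to the Gromov--Witten cohomological field theory of $X$, and then to identify this particular $R$-matrix with the one twisting the theory by the total Chern class of the Hodge bundle $\mathbb{E}$. The algebraic starting point is Mumford's Grothendieck--Riemann--Roch computation of $\ch(\mathbb{E})$ on $\overline{\mathcal{M}}_{g,n}$: the even components vanish, while each odd component equals $\frac{B_{2i}}{(2i)!}$ times a sum of a $\kappa$-term, the $\psi$-powers at the markings, and a boundary pushforward. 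Combining this with Newton's identity in the form $c_u(\mathbb{E})=\sum_{k\ge0}c_k(\mathbb{E})\,u^k=\exp\bigl(\sum_{i\ge1}(2i-2)!\,\ch_{2i-1}(\mathbb{E})\,u^{2i-1}\bigr)$ produces exactly the exponential series with coefficients $\frac{B_{2i}}{(2i)(2i-1)}=(2i-2)!\,\frac{B_{2i}}{(2i)!}$, which are the prefactors defining $R(z)$. Here $u$ is taken to track complex cohomological degree, so that $\psi$-powers of degree $2i-1$ accompany $u^{2i-1}$; this makes the whole construction degree-homogeneous and ensures that $[u^i]$ extracts the degree-$i$ part throughout.

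With this dictionary in place, I would carry out the standard reorganization of the exponentiated Mumford classes into the sum over Feynman graphs $\Gamma\in G^{\mathrm{Feyn}}_{g}$. The boundary pushforwards in $\ch_{2i-1}(\mathbb{E})$ assemble, after resummation over the ways of distributing the classes, into the edges and supply the bivector propagator $\frac{\eta^{\alpha\beta}\phi_\alpha\otimes\phi_\beta-\eta^{\alpha\beta}R^{-1}(\psi_{h(e)})(\phi_\alpha)\otimes R^{-1}(\psi_{h'(e)})(\phi_\beta)}{\psi_{h(e)}+\psi_{h'(e)}}$ of the statement; the $\psi$-powers at the markings produce the $R^{-1}(\psi)$ dressing on the half-edges at each vertex; and the $\kappa$-terms are generated automatically by the dilaton shift built into the $\sim$-convention together with the argument $R^{-1}\mathbf{q}$ inside each vertex bracket $\langle\langle\ \rangle\rangle_{g(v)}^{\sim}(R^{-1}\mathbf{q})$. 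The factors $\frac{1}{|\Aut(\Gamma)|}$ arise from the symmetrization inherent in exponentiating a sum of classes. The outcome of this step is the identity of formal power series in $u$
\[
\sum_{\Gamma\in G^{\mathrm{Feyn}}_{g}}\frac{1}{|\Aut(\Gamma)|}\Cont_{\Gamma}
=\langle\langle\ ;\,c_u(\mathbb{E})\rangle\rangle_g^{\sim}(\mathbf{q}(z)).
\]

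Granting this identity, both cases of the theorem follow by extracting coefficients of $u$. Since $\mathbb{E}$ has rank $g$, its Chern classes $c_i(\mathbb{E})$ vanish for $i>g$, so $[u^i]$ of the right-hand side is zero in that range, giving the first case. For $i=g$ the coefficient is $\langle\langle\ ;c_g(\mathbb{E})\rangle\rangle_g^{\sim}=\langle\langle\ ;\lambda_g\rangle\rangle_g^{\sim}$, and I would invoke the relation between the top Chern class of the Hodge bundle and Pixton's class, namely $\lambda_g=\frac{(-1)^g}{2^g}\mathsf{P}_g^g(\emptyset)$, coming from the degree-$g$ part of the double ramification cycle formula of \cite{janda2017double} at trivial ramification, to obtain the stated right-hand side with its constant $\frac{(-1)^g}{2^g}$.

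The main obstacle is the middle step: proving the graph-sum identity exactly, with every combinatorial constant correct. The two delicate points are, first, showing that the $\kappa$-contributions of Mumford's formula are reproduced precisely by the dilaton shift and the $R^{-1}\mathbf{q}$ vertex argument rather than appearing as separate insertions, which requires tracking the interaction of the translation part of the Givental group with the dilaton-shifted correlators $\langle\langle\ \rangle\rangle^{\sim}$; and second, verifying that the boundary pushforwards, summed over all ways of attaching the $\ch_{2i-1}(\mathbb{E})$ classes to vertices and edges, collapse into the closed-form propagator with denominator $\psi_{h(e)}+\psi_{h'(e)}$ and the correct weights $\frac{1}{|\Aut(\Gamma)|}$. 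I expect both to be handled by the Feynman-diagram bookkeeping underlying the Givental--Teleman reconstruction, with the consistency condition $R(z)R(-z)=1$ (immediate from the odd-power exponent defining $R$) guaranteeing that $R$ is an admissible symplectic $R$-matrix and hence that the action is well defined.
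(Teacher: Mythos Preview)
Your overall strategy is correct and reaches the same endpoint as the paper: both establish
\[
\sum_{\Gamma\in G^{\mathrm{Feyn}}_{g}}\frac{1}{|\Aut(\Gamma)|}\Cont_{\Gamma}=\widetilde{F}_g^{\mathbb{E}}(\mathbf{q},u)=\langle\langle\ ;c_u(\mathbb{E})\rangle\rangle_g^{\sim}(\mathbf{q}(z)),
\]
and then extract the $u^i$-coefficient, using $\mathrm{rk}\,\mathbb{E}=g$ for $i>g$ and $\lambda_g=\frac{(-1)^g}{2^g}\mathsf{P}_g^g(\emptyset)$ for $i=g$. The difference is in how the displayed identity is obtained.

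You propose to start from Mumford's GRR expression for $\ch_{2i-1}(\mathbb{E})$ on $\overline{\mathcal{M}}_{g,n}$, exponentiate via Newton's identity, and then reorganize the boundary/$\psi$/$\kappa$ decorations into Feynman graphs by hand. The paper instead works at the level of generating functions and bypasses your ``main obstacle'' entirely. It quotes the Faber--Pandharipande formula (GRR on the moduli of stable \emph{maps}, which is what is actually needed here), already packaged as a differential equation $\partial_{s_{2l-1}}\mathcal{D}^{\mathbb{E}}=\frac{B_{2l}}{(2l)!}D_{2l-1}\mathcal{D}^{\mathbb{E}}$ for the total Hodge descendant potential. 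After the substitution $s_{2i-1}=(2i-2)!\,u^{2i-1}$ one recognizes $D_{2l-1}=\widehat{z^{2l-1}}$, so that $\widetilde{\mathcal{D}}^{\mathbb{E}}=\widehat{R(z)}\,\widetilde{\mathcal{D}}^X$ with exactly the stated $R$. Givental's explicit formula for the action of a quantized upper-triangular $R$-matrix, together with the standard fact that the logarithm of a Wick expansion retains only connected graphs, then delivers the Feynman sum with all constants (the edge propagator, the $R^{-1}\mathbf{q}$ at vertices, and the $1/|\Aut(\Gamma)|$) in place. In effect, the paper outsources the two ``delicate points'' you flag---the $\kappa$-terms being absorbed by dilaton shift, and the boundary terms collapsing to the closed-form propagator---to two citable results, whereas your route amounts to reproving the content of Givental's quantization formula by direct combinatorics.
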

\begin{proposition}[see Proposition~\ref{prop:3-spin}]
  \label{prop:intro-3-spin}
  Pixton's 3-spin relations \cite{pixton2012conjectural} on the moduli space of stable curves imply the $i>g$ part of Theorem~\ref{thm:Hodge-class-relation}.
\end{proposition}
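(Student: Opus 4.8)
The plan is to read the left-hand side of Theorem~\ref{thm:intro-Hodge-class-relation} as Givental's $R$-matrix action of $R(z)$ on the Gromov--Witten cohomological field theory of $X$. Indeed, the substitution $\mathbf{q}\mapsto R^{-1}\mathbf{q}$ at each vertex together with the edge bivector $\frac{\mathrm{Id}-R^{-1}(\psi)\otimes R^{-1}(\psi')}{\psi+\psi'}$ is precisely the graphical formula for $R.\widetilde{F}$. Since the series $R(z)=\exp\!\left(\sum_i \frac{B_{2i}}{2i(2i-1)}\,u^{2i-1}z^{2i-1}\right)$ is scalar in the $H^*(X;\mathbb{C})$-direction, this action is uniform in the target and is assembled vertex-by-vertex out of $\psi$-class insertions and gluing push-forwards. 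Consequently, stripping the Gromov--Witten brackets reduces the identity $[u^i]=0$ for $i>g$ to a single \emph{universal tautological relation} on $\overline{\mathcal{M}}_{g,n}$, valid for every $X$ by the splitting axiom. The first step is therefore to make this reduction precise and to record the resulting class in $R^{*}(\overline{\mathcal{M}}_{g,n})$.

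Next I would identify this relation. The exponent $\sum_i\frac{B_{2i}}{2i(2i-1)}z^{2i-1}$ is the logarithm of the $R$-matrix whose action on the trivial theory reproduces Mumford's Grothendieck--Riemann--Roch formula for the Hodge bundle $\mathbb{E}$, with $u$ bookkeeping cohomological degree. Under this identification the $u$-degree-$i$ part of $R.\widetilde{F}$ is the degree-$i$ Hodge contribution, so the $i>g$ vanishing is the explicit tautological incarnation of $c_i(\mathbb{E})=0$ for $i>\operatorname{rank}\mathbb{E}=g$, written as a polynomial in $\psi$, $\kappa$ and boundary classes. For $i>g$ the vertex brackets are untwisted, the classes $\mathsf{P}_g^g$ entering only at $i=g$, so the reduction is genuinely clean in the range of interest.

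Finally I would deduce this Mumford-type relation from Pixton's 3-spin relations \cite{pixton2012conjectural}. The cleanest route is through Chiodo's formula: the $r$-spin class is itself an $R$-matrix graph sum, its vanishing above the virtual cohomological degree is the $r$-spin relation of Pandharipande--Pixton--Zvonkine, and in the Hodge specialization of the spin data the $r$-spin $R$-matrix degenerates to the Stirling matrix above while the degree bound degenerates to $g$. The $i>g$ vanishing then appears as the limit of the $r$-spin relations, and since by Pixton's analysis the $r$-spin relations for all $r$ lie in the ideal generated by the 3-spin relations, these imply the relation we want. The main obstacle will be the bookkeeping in this limit: checking that the Stirling series is exactly the surviving diagonal part of the $r$-spin $R$-matrix, that its $u$-grading matches the cohomological degree bound of the $r$-spin class, and that all Bernoulli and automorphism factors $\frac{1}{|\Aut(\Gamma)|}$ are preserved with no spurious contributions surviving the specialization.
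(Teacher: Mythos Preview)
Your first two steps are essentially correct and match the paper: the left-hand side of Theorem~\ref{thm:Hodge-class-relation} is the Givental $R$-matrix action with the Mumford/Bernoulli $R(z)$, so the statement $[u^i]=0$ for $i>g$ is exactly the tautological vanishing $\lambda_i=0$ for $i>g$, written in the strata algebra via the Givental--Teleman reconstruction of the one-dimensional Hodge CohFT $\Omega^{\lambda(u)}$. The paper sets this up in the same way.

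The divergence is in your third step. You propose to reach the 3-spin relations through $r$-spin theory and Chiodo's formula, arguing that under a ``Hodge specialization of the spin data'' the $r$-spin $R$-matrix degenerates to the Bernoulli series and the PPZ degree bound degenerates to $g$, and then invoking that $r$-spin relations for all $r$ lie in the 3-spin ideal. This last fact is indeed known, but it is a theorem of Janda rather than of Pixton, and more seriously the specialization you describe is not one that is available in the literature: the Hodge CohFT is one-dimensional and does not sit inside the $(r{-}1)$-dimensional Witten $r$-spin family for any $r\ge 2$. Chiodo's GRR formula does specialize to Mumford's formula, but that specialization (morally $r=1$) takes you outside the range where the PPZ vanishing and the ``$r$-spin $\Rightarrow$ 3-spin'' reduction apply. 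So as written, the bridge from the Hodge vanishing to the 3-spin ideal is missing a genuine step.

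The paper closes this gap by a different and cleaner route. Instead of trying to realize $\Omega^{\lambda(u)}$ as a limit of $r$-spin theories, it embeds it as a direct summand of the degree-zero $\CC^*$-equivariant Gromov--Witten CohFT of $\PP^1$: in the idempotent basis $\Omega^{\PP^1}$ splits as $\Omega^{\lambda(\mu^{-1})}\oplus\Omega^{\lambda((-\mu)^{-1})}$, with diagonal $R$-matrix whose blocks are exactly your Bernoulli series at $u=\pm\mu^{-1}$. The results of \cite{janda2018frobenius} apply directly to this genuinely semisimple CohFT (no limits needed) and say that the polar part in $\mu$ of its Givental--Teleman reconstruction is a linear combination of 3-spin relations. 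A short computation with the insertion $[0]$ shows that these polar terms are precisely the $u$-degree $>g$ part of the Hodge reconstruction. This buys you an honest embedding into a theory where the reduction-to-3-spin machinery is already established, rather than a limit that would itself require justification.
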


\subsection{Plan of the paper}

This paper is organized as follows.
In Section~\ref{sec:chern-character-Hodge}, we review Faber and
Pandharipande's formula for Hodge classes, Pixton's double
ramification cycle formula for the top Hodge class, and prove
Theorem~\ref{thm:deg-g-chern-chara} and Corollary~\ref{cor:LP-push-g}.
In Section~\ref{sec:push-forward relation}, we give prove the
new push-forward relation of Theorem~\ref{thm:LP-push-g-class}.
In Section~\ref{sec:Chern class of Hodge bundle}, we prove
Theorem~\ref{thm:intro-Hodge-class-relation} and
Proposition~\ref{prop:intro-3-spin}.

\section{Chern characters of the Hodge bundle}\label{sec:chern-character-Hodge}
\subsection{Review of Faber and Pandharipande's formula}

Let $\overline{\mathcal{M}}_{g,n}(X,\beta)$ be the moduli space of stable maps $\{f: (C; p_1,\dots,p_n)\rightarrow X\}$, where $(C; p_1,\dots,p_n)$
is a genus-$g$ nodal curve with $n$ marked points and the image $f_*[C]=\beta\in H_2(X;\mathbb{Z})$ is the effective class $\beta$.
The Hodge bundle $\mathbb{E}$ over $\overline{\mathcal{M}}_{g,n}(X,\beta)$ is the rank $g$ vector bundle with fiber $H^0(C,\omega_{C})$ over a moduli point
$f: (C,p_1,\dots,p_n)\rightarrow X$, where $\omega_{C}$ is the dualizing sheaf of the domain curve $C$.
In fact, the Hodge bundle over $\overline{\mathcal{M}}_{g,n}(X,\beta)$ is the pullback of the Hodge bundle from $\overline{\mathcal{M}}_{g,n}$ under the stablization map $\St\colon \overline{\mathcal{M}}_{g,n}(X,\beta)\rightarrow \overline{\mathcal{M}}_{g,n}$.
Let $\ch(\mathbb{E})$ be
the Chern character of $\mathbb{E}$ and denote its degree $k$ component  by $\ch_{k}(\mathbb{E})$.
It is well-known that the positive even Chern characters $\ch_{2k}(\mathbb{E})$ vanish (cf. \cite{mumford1983towards}). Let $\lambda_i:=c_i(\mathbb{E})$ be the $i$-th Chern class of Hodge bundle. 

In \cite{faber2000hodge}, Faber and Pandharipande considered the following Hodge integrals over the moduli space of stable maps:
\begin{align*}
\big\langle\prod_{i=1}^{n}\tau_{k_i}(\phi_{\alpha_i});\prod_{j=1}^{m}\ch_{l_j}(\mathbb{E})\big\rangle_{g,\beta}:=\int_{[\overline{\mathcal{M}}_{g,n}(X,\beta)]^\vir}\prod_{i=1}^{n}\psi_i^{k_i}\prod_{i=1}^{n}\ev_i^*\phi_{\alpha_i}\prod_{j=1}^{m}\ch_{l_j}(\mathbb{E})
\end{align*}
where $[\overline{\mathcal{M}}_{g,n}(X,\beta)]^\vir$ is the
virtual fundamental class of $\overline{\mathcal{M}}_{g,n}(X,\beta)$,
the descendant $\psi$-class $\psi_i$ is the first Chern class of the
tautological line bundle over $\overline{\mathcal{M}}_{g,n}(X,\beta)$
defined by the cotangent lines at the $i$-th marking of the domain
curves, and $\ev_i\colon \overline{\mathcal{M}}_{g,n}(X,\beta) \to X$
is the evaluation map defined by the image of the $i$-th marking.

The generating
function of genus-$g$ Hodge integrals is defined to be
\begin{align}\label{eqn:Fg-Hodge-t-s}
F_g^{\mathbb{E}}(\mathbf{t},\mathbf{s})
:=\sum_{\beta\in H_2(X;\mathbb{Z})}Q^\beta
\Bigg\langle\exp\left(\sum_{n,\alpha}t_n^\alpha\tau_n(\phi_\alpha)\right);\exp\left(\sum_{m\geq1}s_{2m-1}\ch_{2m-1}(\mathbb{E})\right)\Bigg\rangle_g
\end{align}
where $Q$ is the Novikov variable. 
In particular, after the restriction $\mathbf{s}=0$, the generating
function $F_g(\mathbf{t}):=F_{g}^{\mathbb{E}}|_{\mathbf{s}=0}$ becomes
the usual generating function of genus-$g$ pure descendant
Gromov--Witten invariants. For convenience, we identify $\tau_n(\phi_\alpha)$ with $\frac{\partial}{\partial t_n^\alpha}$
and $\ch_n(\mathbb{E})$ with $\frac{\partial}{\partial s_n}$. Moreover we use
the convention that $\tau_0(\phi_\alpha)=\phi_\alpha$ and $\tau_n(\phi_\alpha)=0$ if $n<0$. Double brackets denote differentiation of $F_{g}^{\mathbb{E}}(\mathbf{t},\mathbf{s})$,
\begin{align*}
\langle\langle\tau_{k_1}(\phi_{\alpha_1})\ldots\tau_{k_n}(\phi_{\alpha_n});\prod_{i=1}^{m}\ch_{2l_i-1}(\mathbb{E})\rangle\rangle_{g}^{\mathbb{E}} 
=\frac{\partial^{n+m}}{\partial t_{k_1}^{\alpha_1}\dots\partial t_{k_n}^{\alpha_n}\partial s_{2l_1-1}\dots\partial s_{2l_m-1}}F_{g}^{\mathbb{E}}(\mathbf{t},\mathbf{s})
\end{align*}
and
\begin{align*}
\langle\langle\tau_{k_1}(\phi_{\alpha_1})\ldots\tau_{k_n}(\phi_{\alpha_n});\prod_{i=1}^{m}\ch_{2l_i-1}(\mathbb{E})\rangle\rangle_{g}
=\langle\langle\tau_{k_1}(\phi_{\alpha_1})\ldots\tau_{k_n}(\phi_{\alpha_n});\prod_{i=1}^{m}\ch_{2l_i-1}(\mathbb{E})\rangle\rangle_{g}^{\mathbb{E}}|_{\mathbf{s}=0}.    
\end{align*}

In \cite{faber2000hodge}, the following universal equation for descendant  Gromov--Witten invariants of any smooth projective variety was proved for any $l>g$:
\begin{align}\label{eqn:FP-l>g}
-\Tilde{t}_n^\alpha\langle\langle\tau_{n+2l-1}(\phi_\alpha)\rangle\rangle_g+\frac{1}{2}\sum_{i=0}^{2l-2}(-1)^{i}
\left\{\langle\langle\tau_i(\phi_\alpha)\tau_{2l-2-i}(\phi^\alpha)\rangle\rangle_{g-1}
+\sum_{h=0}^{g}\langle\langle\tau_{i}(\phi_\alpha)\rangle\rangle_h
\langle\langle\tau_{2l-2-i}(\phi^\alpha)\rangle\rangle_{g-h}\right\}=0
\end{align}
where $\tilde{t}_n^\alpha=t_n^\alpha-\delta_{n}^{1}\delta_\alpha^{1}$. In fact, the Grothendieck--Riemann--Roch formula on the moduli space of
stable maps (cf. \cite{faber2000hodge}) implies
\begin{align}\label{eqn:grr-FP-l>g}
&\frac{(2l)!}{B_{2l}}
\langle\langle \ch_{2l-1}(\mathbb{E})\rangle\rangle_{g}^{\mathbb{E}}
\nonumber\\=&-\Tilde{t}_n^\alpha\langle\langle\tau_{n+2l-1}(\phi_\alpha)\rangle\rangle_g^{\mathbb{E}}+\frac{1}{2}\sum_{i=0}^{2l-2}(-1)^{i}
\left\{\langle\langle\tau_i(\phi_\alpha)\tau_{2l-2-i}(\phi^\alpha)\rangle\rangle_{g-1}^{\mathbb{E}}
+\sum_{h=0}^{g}\langle\langle\tau_{i}(\phi_\alpha)\rangle\rangle_h^{\mathbb{E}}
\langle\langle\tau_{2l-2-i}(\phi^\alpha)\rangle\rangle_{g-h}^{\mathbb{E}}\right\}
\end{align}
where $B_{2n}$ are the classical Bernoulli numbers defined by
\[\frac{x}{e^x-1}=1+\frac{x}{2}+\sum_{n=1}^{\infty}B_{2n}\frac{x^{2n}}{(2n)!}.\]
Then \eqref{eqn:FP-l>g} follows from \eqref{eqn:grr-FP-l>g}, the vanishing of Chern character of Hodge bundle $\ch_{2l-1}(\mathbb{E})=0$ for $l>g$, and setting $\mathbf{s}=0$.
This leads to the question how can these universal equations for pure descendant Gromov--Witten invariants be extended to the case $l\leq g$?
In this paper, we give an answer to this question in the case $l=g$.

\subsection{Pixton's formula for double ramification cycles and the $\lambda_g$ class} \label{sec:pixfor}
\subsubsection{Pixton's formula for double ramification cycles}
\label{pixconj}

Denote $G_{g,n}$ to be the set of stable graphs of genus-$g$, $n$ marked points \cite[\S4.2]{pandharipande2015calculus}.
Let $A=(a_1,\ldots,a_n)$ be a vector of double ramification data,
i.e.\ a vector of integers such that $\sum_{i=1}^na_i=0$.
Let $\Gamma \in G_{g,n}$ be a stable graph of genus $g$ with $n$ legs
and $r$ be a positive integer.
A weighting mod $r$ of $\Gamma$ is a function on the set of
half-edges,
$$ w\colon H(\Gamma) \rightarrow \{0,\dots,r-1\},$$
which satisfies the following three properties:
\begin{enumerate}
\item[(i)] $\forall h_i\in L(\Gamma)$, corresponding to
 the marking $i\in \{1,\ldots, n\}$,
$$w(h_i) \equiv a_i \mod r \ ,$$
\item[(ii)] $\forall e \in E(\Gamma)$, corresponding to two half-edges
$h,h' \in H(\Gamma)$,
$$w(h)+w(h') \equiv 0 \mod r\, ,$$
\item[(iii)] $\forall v\in V(\Gamma)$,
$$\sum_{v(h)= v} w(h) \equiv 0 \mod r\, ,$$ 
where the sum is taken over all  half-edges incident to $v$.
\end{enumerate}
Let $\mathsf{W}_{\Gamma,r}$ be the set of weightings mod $r$
of $\Gamma$. The set $\mathsf{W}_{\Gamma,r}$ is finite, with cardinality $r^{h^1(\Gamma)}$.
For each stable graph $\Gamma\in G_{g,n}$, we associate  a moduli space $\overline{\mathcal{M}}_\Gamma:=\prod_{v\in V(\Gamma)}\overline{\mathcal{M}}_{g(v),n(v)}$ and define a natural map $\xi_{\Gamma}: \overline{\mathcal{M}}_\Gamma \to 
\overline{\mathcal{M}}_{g,n}$ to be the  canonical gluing morphism.

We denote by
$\mathsf{P}_g^{d,r}(A)\in R^d(\overline{\mathcal{M}}_{g,n})$ the degree $d$ component of the tautological class 
\begin{align*}
\hspace{-5pt}\sum_{\Gamma\in {G}_{g,n}} 
\sum_{w\in \mathsf{W}_{\Gamma,r}}
\frac{1}{|\Aut(\Gamma)| } 
\frac{1}{r^{h^1(\Gamma)}}
\xi_{\Gamma*}\Bigg[
\prod_{i=1}^n \exp(a_i^2 \psi_{h_i}) \cdot 
\prod_{e=(h,h')\in E(\Gamma)}
\frac{1-\exp(-w(h)w(h')(\psi_h+\psi_{h'}))}{\psi_h + \psi_{h'}} \Bigg]
\end{align*} 
in $R^*(\overline{\mathcal{M}}_{g,n})$.

Pixton proved for fixed $g$, $A$, and $d$, the \label{pply}
class
$\mathsf{P}_g^{d,r}(A) \in R^d(\overline{\mathcal{M}}_{g,n})$
is polynomial in $r$ for $r$ sufficiently large. 
We denote by $\mathsf{P}_g^d(A)$ the value at $r=0$ 
of the polynomial associated to $\mathsf{P}_g^{d,r}(A)$. In other words, $\mathsf{P}_g^d(A)$ is the  constant term of the associated polynomial in $r$. 

The following formula for double ramification cycles was first conjectured
by Pixton.
\begin{theorem}[\cite{janda2017double}] \label{thm:DR-Pixton}
For $g\geq 0$ and double ramification data $A$, we have
$$\mathsf{DR}_g(A) = 2^{-g}\, \mathsf{P}_g^g(A)\, \in R^g(\overline{\mathcal{M}}_{g,n}).$$ 
\end{theorem}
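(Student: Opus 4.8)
The plan is to access the geometrically defined cycle $\mathsf{DR}_g(A)$ through virtual localization and to match the resulting tautological expression with Pixton's class $\mathsf{P}_g^g(A)$, whose building blocks—stable graphs, the leg factors $\exp(a_i^2\psi_{h_i})$, and the edge factor $\tfrac{1-\exp(-w(h)w(h')(\psi_h+\psi_{h'}))}{\psi_h+\psi_{h'}}$—are precisely of the shape produced by such a localization. The bridge between the two sides is the theory of $r$-th roots together with Chiodo's Grothendieck--Riemann--Roch computation. First I would recall the definition of $\mathsf{DR}_g(A)$ as the pushforward to $\overline{\mathcal{M}}_{g,n}$ of the virtual fundamental class on the moduli space of stable maps to rubber $\mathbb{P}^1$ with ramification profile prescribed by $A$ (the positive $a_i$ lying over $0$ and the negative $a_i$ over $\infty$). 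Applying $\mathbb{C}^*$-virtual localization and the rubber calculus expresses this pushforward as a sum over localization graphs, with vertex contributions given by Hodge/rubber integrals and edge contributions given by node-smoothing factors in the $\psi$-classes.

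On the combinatorial side, I would introduce the moduli space $\overline{\mathcal{M}}_{g,n}^{r}$ of $r$-th roots of the line bundle $\mathcal{O}(\sum_i a_i x_i)$ (with the level-zero convention), carrying a universal root $\mathcal{L}$, and apply GRR to $-R\pi_*\mathcal{L}$. This is Chiodo's formula, and its degree-$g$ component, pushed to $\overline{\mathcal{M}}_{g,n}$ and evaluated at the polynomial-in-$r$ value $r=0$, is by construction exactly $\mathsf{P}_g^g(A)$: the weightings mod $r$ of a stable graph $\Gamma$ are the $\mu_r$-monodromy data of $\mathcal{L}$, the factor $\exp(a_i^2\psi_{h_i})$ is the leg term of GRR, and the edge factor encodes the node contributions. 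Thus the theorem reduces to identifying this Chiodo-type class, in its $r\to 0$ limit, with the rubber localization output computing $\mathsf{DR}_g(A)$ (carrying the normalizing factor $2^{-g}$).

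The core of the argument is then to realize both tautological expressions as outputs of a single localization. I would set up $\mathbb{C}^*$-localization on the space of $r$-twisted stable maps to the root target $[\mathbb{P}^1/\mu_r]$ (equivalently, stable maps to $\mathbb{P}^1$ with $\mu_r$-orbifold structure at $0$ and $\infty$), and show that as $r$ grows the $r$-spin virtual contribution localizes onto the rubber locus that computes $\mathsf{DR}_g(A)$. Concretely, one matches the vertex factors—Hodge integrals against the rubber virtual class versus the Chiodo vertex terms coming from the fixed loci $\overline{\mathcal{M}}_{g(v),n(v)}$—and the edge factors—the rubber node-smoothing term versus $\tfrac{1-\exp(-w(h)w(h')(\psi_h+\psi_{h'}))}{\psi_h+\psi_{h'}}$—tracking the power of $2$ throughout. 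The polynomiality of $\mathsf{P}_g^{g,r}(A)$ in $r$, established by Pixton, is what makes the $r\to 0$ extraction of the constant term meaningful and lets the large-$r$ geometric limit be read off from the polynomial.

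The hardest part, and the main obstacle, is this final comparison of the two localization formulas: proving that the $r$-twisted virtual class genuinely degenerates to the DR rubber class, isolating Pixton's constant term, and verifying that every edge and vertex factor matches on the nose. Rigorously controlling the virtual classes on the relative/rubber and root-stack moduli spaces—their obstruction bundles, the $\mu_r$-gerbe normalizations $r^{-h^1(\Gamma)}$, and the combinatorics of the weightings mod $r$—while showing the limit commutes with the graph sum is the technical crux. I expect the bookkeeping of the $r$-dependence and the reconciliation of the rubber vertex integrals with the Chiodo vertex contributions to be where essentially all the difficulty lies, with the leg and edge matchings being comparatively formal once the framework is in place.
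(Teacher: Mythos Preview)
The paper does not prove this theorem at all: it is quoted verbatim from \cite{janda2017double} as an external input, and the surrounding text only uses the special case $A=(0,\dots,0)$ to extract the formula~\eqref{eqn:lambda-g-formula} for $\lambda_g$. So there is no ``paper's own proof'' to compare against.

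That said, your sketch is a faithful outline of the strategy actually used in \cite{janda2017double}: realize $\mathsf{DR}_g(A)$ via rubber maps, pass to moduli of $r$-th roots, identify Pixton's class with the degree-$g$ part of Chiodo's GRR expression for $c(-R\pi_*\mathcal L)$ at $r=0$, and bridge the two by $\mathbb C^*$-localization on stable maps to an orbifold $\mathbb P^1$ with $\mu_r$-structure at $0$ and $\infty$. You have also correctly located where the real work lies, namely the comparison of the rubber virtual class with the orbifold localization output and the control of the $r$-dependence. For the purposes of the present paper none of this is needed; you may simply cite the result.
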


\subsubsection{Formula for $\lambda_g$}

The top Chern class $\lambda_g$ of the Hodge bundle $\EE$ is a very
special case of a double ramification cycle (see
\cite[\S0.5.3]{janda2017double}):
\begin{equation}
\label{eqn:DR-lambda-g}\mathsf{DR}_g(0,\ldots,0) = (-1)^g\lambda_g \ \in R^g(\overline{\mathcal{M}}_{g,n}).\    
\end{equation}

Combining Theorem~\ref{thm:DR-Pixton} and \eqref{eqn:DR-lambda-g}, it
leads to a nice formula for the top Chern class of Hodge bundle
$\lambda_g$, which is supported on the boundary divisor of curves with
a non-separating node.
That is, in $R^{g}(\overline{\mathcal{M}}_{g,n})$, we have
\begin{equation}\label{eqn:lambda-g-formula}
  \lambda_g=\frac{(-1)^g}{2^g} \mathsf{P}_{g}^{g}(0,\dots,0) .
\end{equation} 
To illustrate this, we list some explicit formulae of $\lambda_g$ on
$\overline{\mathcal{M}}_g$ $(g\geq2)$ in low genus
(cf. \cite[\S3.1]{janda2017double}, which also lists a longer formula
in genus $4$):

\paragraph{Genus~1.}
\begin{align}
  \label{eqn:P_1^1(00)}
  \lambda_1 = -\frac 12 \mathsf{P}_{1}^{1}(0,0) = \frac{1}{24} \xi_{\Gamma*}(1)  
\end{align}
where $\Gamma \in G_{1,2}$ is the dual graph of the boundary divisor of singular stable curves
with a non-separating node.

\paragraph{Genus~2.}
\begin{equation*}
\lambda_2 = 
  \frac 1{240} \tikz{\NC \nn{A}{130}{{}} \ggg{1}{A}}
  + \frac 1{1152} \tikz{\NN \ggg{0}{A}}.
\end{equation*}

\paragraph{Genus~3.}
\begin{align*}
\lambda_3 &= 
  \frac 1{2016} \tikz{\NC \nn{A}{130}{2} \ggg{2}{A}}
  + \frac 1{2016} \tikz{\NC \nn{A}{130}{{}} \nn{A}{50}{{}} \ggg{2}{A}}
  - \frac 1{672} \tikz{\DE \nn{A}{30}{{}} \ggg{1}{A} \ggg{1}{B}}
  + \frac 1{5760} \tikz{\NN \nn{A}{160}{{}} \ggg{1}{A}} \\
  &
  - \frac{13}{30240} \tikz{\TE \ggg{0}{A} \ggg{1}{B}}
  - \frac 1{5760} \tikz{\NL \DE \ggg{0}{A} \ggg{1}{B}}
  + \frac 1{82944} \tikz{\NNN \ggg{0}{A}}.
\end{align*}
All these expressions are obtained by substituting $A=\emptyset$ in
Pixton's formula for the double ramification cycle (except in the
genus one case, where we should set $A = (0, 0)$).

\subsection{Universal equations from $\ch_{2g-1}(\mathbb{E})$}

A well-known formula for $\ch_{2g-1}(\mathbb{E})$ (cf. \cite[(46)]{faber2000logarithmic}) is
\begin{align}\label{eqn:ch-2g-1-hodge}
\ch_{2g-1}(\mathbb{E})=\frac{(-1)^{g-1}}{(2g-1)!}\lambda_g\lambda_{g-1}.
\end{align}
In fact, this is a consequence from the following general relationship between the Chern characters and Chern classes of a vector bundle:
\begin{align*}
\sum_{k\geq1}(-1)^{k-1}(k-1)!\ch_k(\mathbb{E})t^k=\log\left(\sum_{k\geq0}c_{k}(\mathbb{E})t^k\right).
\end{align*}
Taking derivative $\frac{d}{dt}$, we get
\begin{align*}
\sum_{k\geq1}(-1)^{k-1}k!\ch_k(\mathbb{E})t^{k-1}=\left(\sum_{k\geq0}\lambda_{k}t^k\right)^{-1}
\cdot \left(\sum_{k\geq1}k\cdot\lambda_{k}t^{k-1}\right).
\end{align*}
Since $c(\mathbb{E})^{-1}=c(\mathbb{E}^*)$, the above becomes
\begin{align}\label{eqn:chern-chara-chern-class-Hodge}
\sum_{k\geq1}(-1)^{k-1}k!\ch_k(\mathbb{E})t^{k-1}=\left(\sum_{k=0}^{g}\lambda_{k}(-t)^k\right)
\cdot \left(\sum_{k=1}^{g}k\cdot\lambda_{k}t^{k-1}\right).
\end{align}
Taking the coefficients of $t^{2g-2}$ on both sides of \eqref{eqn:chern-chara-chern-class-Hodge} gives \eqref{eqn:ch-2g-1-hodge}.

Recall the following  important properties of the Hodge bundle:
\begin{itemize}
    \item Pulling back via the gluing map $\iota_{g_1,g_2}\colon \overline{\mathcal{M}}_{g_1,n_1+1}\times \overline{\mathcal{M}}_{g_2,n_2+1}\rightarrow \overline{\mathcal{M}}_{g,n}$, we have
    \begin{align}
    \label{eqn:prop-Hodge-1}\iota_{g_1,g_2}^*\mathbb{E}_g\cong p_1^*\mathbb{E}_{g_1}\oplus p_2^*\mathbb{E}_{g_2}\end{align}
    where $p_i (i=1,2)$ denotes via the projection maps from $\overline{\mathcal{M}}_{g_1,n_1+1}\times \overline{\mathcal{M}}_{g_2,n_2+1}$ onto its factors.
    \item Pulling back via the gluing map: $\iota_{g-1}\colon \overline{\mathcal{M}}_{g-1,n+1}\rightarrow \overline{\mathcal{M}}_{g,n}$, we have
    \begin{align}
    \label{eqn:prop-Hodge-2}\iota_{g-1}^*\mathbb{E}_g\cong \mathbb{E}_{g-1}\oplus\mathcal{O}\end{align}
    where $\mathcal{O}$ is the structure sheaf.
\end{itemize}

\begin{lemma}\label{lem:con-gamma-lambda-g}
  For any $g\geq2$ and a stable circular graph $\Gamma\in {G}_{g,0}^\circ$, the contribution of $\Gamma$ to $\mathsf{P}_{g}^{g}(\emptyset)$ is:
  \begin{align}\label{eqn:con-gamma-lambda-g}
    &\Cont_{\Gamma}\mathsf{P}_{g}^g(\emptyset)= 
      \frac{B_{2g}}{|\Aut(\Gamma)|}\sum_{\substack{\sum_{e\in E(\Gamma)=g}k(e)=g\\ k(e)\geq1, \forall e\in E(\Gamma)}
    }(\xi_{\Gamma})_*\prod_{e\in E(\Gamma)}\left(\frac{-1}{k(e)!}(\psi_{h(e)}+\psi_{h'(e)})^{k(e)-1}  \right).
  \end{align}
\end{lemma}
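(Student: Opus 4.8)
The plan is to compute the contribution of a fixed stable circular graph $\Gamma$ to the tautological class $\mathsf{P}_g^g(\emptyset)$ directly from Pixton's formula, using the special combinatorial structure of circular graphs together with the polynomiality in $r$. Recall that $\mathsf{P}_g^{g,r}(\emptyset)$ is a sum over stable graphs $\Gamma'$ and weightings $w\bmod r$ of pushforwards of products of edge factors $\frac{1-\exp(-w(h)w(h')(\psi_h+\psi_{h'}))}{\psi_h+\psi_{h'}}$ (the leg factors are trivial since $A=\emptyset$). For the degree-$g$ component on a graph $\Gamma$ with $g$ vertices and $g$ edges, each of the $g$ vertices $v$ must have genus $g(v)=1$ (by the genus condition $\sum g(v)+h^1(\Gamma)=g$ and $h^1=g$ for a circular graph), so every vertex contributes $\overline{\mathcal{M}}_{1,2}$, and all $g$ units of cohomological degree must come from expanding the edge factors to total degree $g$ across the $g$ edges.

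First I would expand each edge factor: writing $w(h)w(h')=-w(h)^2\bmod r$ (using condition (ii), $w(h')\equiv -w(h)$), the factor becomes $\sum_{k(e)\geq 1}\frac{-(-w(h)^2)^{k(e)}}{k(e)!}(\psi_h+\psi_{h'})^{k(e)-1}$, and imposing total degree $g$ forces $\sum_{e}k(e)=g$ with each $k(e)\geq 1$. The key step is then to perform the sum over all weightings $w\in\mathsf{W}_{\Gamma,r}$ and divide by $r^{h^1(\Gamma)}=r^g$, extracting the coefficient that is polynomial in $r$ and taking its value at $r=0$. For a circular graph the edges form a single cycle, and conditions (ii)–(iii) pin the weightings down tightly: going around the cycle the half-edge weights alternate in sign, so the entire weighting is determined by a single free parameter $a\in\{0,\dots,r-1\}$, giving exactly $r=r^{h^1}$ weightings, consistent with the stated cardinality. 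The factor $\prod_e (w(h)w(h'))^{k(e)}=\prod_e (w(h)^2)^{k(e)}=a^{2g}$ (all incident half-edge weights equal $\pm a$), so the weighted sum reduces to $\frac{1}{r^g}\sum_{a=0}^{r-1}a^{2g}$ times the $\psi$-monomial.

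The heart of the argument is the identity $\frac{1}{r^g}\sum_{a=0}^{r-1}a^{2g}$ and its constant term in $r$. By Faulhaber's formula the power sum $\sum_{a=0}^{r-1}a^{2g}$ is a polynomial in $r$ whose expansion is governed by Bernoulli numbers; specifically $\sum_{a=0}^{r-1}a^{2g}=\frac{1}{2g+1}\sum_{j}\binom{2g+1}{j}B_j r^{2g+1-j}$, and after dividing by $r^g$ and extracting the $r$-constant term one lands precisely on the coefficient that produces $B_{2g}$. I expect this Bernoulli-number bookkeeping — matching the value at $r=0$ of the Faulhaber polynomial divided by $r^g$ against the single factor $B_{2g}$ in \eqref{eqn:con-gamma-lambda-g} — to be the main obstacle, since one must correctly track the sign $-1$ per edge (yielding $(-1)^g$ absorbed into the product of $\frac{-1}{k(e)!}$ factors) and confirm that all $\psi$-dependence assembles into $\prod_e\frac{-1}{k(e)!}(\psi_{h(e)}+\psi_{h'(e)})^{k(e)-1}$ exactly as claimed.

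Finally I would assemble the pieces: the automorphism factor $\frac{1}{|\Aut(\Gamma)|}$ comes directly from Pixton's formula, the gluing map $\xi_\Gamma$ pushes forward the product of edge $\psi$-monomials from $\overline{\mathcal{M}}_\Gamma=\prod_v\overline{\mathcal{M}}_{1,2}$ to $\overline{\mathcal{M}}_{g,0}$, and the sum over $\{k(e)\}$ with $\sum_e k(e)=g$ and $k(e)\geq 1$ is exactly the degree-$g$ selection. Combining the Bernoulli computation with these structural contributions yields \eqref{eqn:con-gamma-lambda-g}. One point to check carefully is that no other stable graph $\Gamma'$ appearing in Pixton's formula can degenerate or contribute to the $\Gamma$-term after pushforward; this is handled by noting that the contribution $\Cont_\Gamma$ is defined by restricting to the summand indexed by $\Gamma$ itself, so cross-terms do not arise, and the genus/degree constraints force all vertices to carry genus exactly $1$.
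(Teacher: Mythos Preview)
Your approach follows the same outline as the paper (expand Pixton's formula on $\Gamma$, parametrize weightings by a single residue $a$, pull out a power sum, apply Faulhaber), but there is a genuine error in your description of stable circular graphs that propagates through the computation.

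A stable circular graph $\Gamma\in G_{g,0}^\circ$ does \emph{not} have $g$ vertices, $g$ edges, and $h^1(\Gamma)=g$. It has some number $m$ of vertices (with $1\le m\le g-1$), the same number $m$ of edges (a single cycle), hence $h^1(\Gamma)=1$, and the vertex genera $g(v)\ge 1$ satisfy $\sum_v g(v)=g-1$. The paper's list of graphs in $G_{4,0}^\circ$ (one vertex of genus $3$; two vertices of genera $1,2$; three vertices each of genus $1$) illustrates this. Consequently you must divide by $r^{h^1(\Gamma)}=r$, not $r^g$; and you correctly observe that the weightings are parametrized by a single $a\in\{0,\dots,r-1\}$, which is consistent with $|\mathsf W_{\Gamma,r}|=r^{h^1}=r$, not $r^g$.

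A second error: in Pixton's formula the product $w(h)w(h')$ is the ordinary integer product of the representatives in $\{0,\dots,r-1\}$, not a residue modulo $r$. With $w(h)=a$ and $w(h')=r-a$ one has $w(h)w(h')=a(r-a)$, so each edge contributes $(-a(r-a))^{k(e)}=(a(a-r))^{k(e)}$ and the product over edges gives $a^g(a-r)^g$, not $a^{2g}$. The correct sum to analyze is therefore
\[
\frac{1}{r}\sum_{a=1}^{r-1}a^g(a-r)^g,
\]
and it is the constant term of \emph{this} expression (after the Faulhaber expansion) that equals $B_{2g}$. Your expression $\frac{1}{r^g}\sum_{a=0}^{r-1}a^{2g}$ has constant term $\frac{1}{2g+1}\binom{2g+1}{g+1}B_{g+1}$, which is not $B_{2g}$. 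Once you correct the graph structure and the integer product, the Faulhaber step goes through exactly as you outline.
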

\begin{proof}
  For any graph $\Gamma\in G_{g,0}^\circ$, the associated weighting modulo $r$ is either identically equal to $0$, or alternatingly $a$ and $r - a$ for some $1\leq a\leq r-1$.
  From \cite[Appendix]{janda2017double}, we know that
  \begin{equation*}
  \left[\frac{1}{|\Aut(\Gamma)|}
\frac{1}{r}\sum_{a=1}^{r-1}(\xi_{\Gamma})_*\prod_{e\in E(\Gamma)}\frac{1-\exp(-a(r-a)(\psi_{h(e)}+\psi_{h'(e)}))}{\psi_{h(e)}+\psi_{h'(e)}}\right]_{\deg=g}
  \end{equation*}
  is polynomial in $r$ for $r$ sufficiently large. 
  The contribution $\Cont_{\Gamma}\mathsf{P}_{g}^g(\emptyset)$ of the graph
  $\Gamma$ to $\mathsf{P}_{g}^{g}(\emptyset)$ equals the constant term of this polynomial.
  By a direct computation,
  \begin{align*}
    &\left[\frac{1}{|\Aut(\Gamma)|}
      \frac{1}{r}\sum_{a=1}^{r-1}(\xi_{\Gamma})_*\prod_{e\in E(\Gamma)}\frac{1-\exp(-a(r-a)(\psi_{h(e)}+\psi_{h'(e)}))}{\psi_{h(e)}+\psi_{h'(e)}}   \right]_{\deg=g}
    \\=&
         \frac{1}{|\Aut(\Gamma)|}\frac{1}{r}\sum_{a=1}^{r-1}\sum_{\sum_{e\in E(\Gamma)}k(e)=g
         }(\xi_{\Gamma})_*\prod_{e\in E(\Gamma)}\frac{-1}{k(e)!}a^{k(e)}(a-r)^{k(e)}(\psi_{h(e)}+\psi_{h'(e)})^{k(e)-1}  
    \\=&
         \left(\frac{1}{r}\sum_{a=1}^{r-1}a^g(a-r)^{g}\right)\cdot\frac{1}{|\Aut(\Gamma)|}\sum_{\sum_{e\in E(\Gamma)}k(e)=g
         }(\xi_{\Gamma})_*\prod_{e\in E(\Gamma)}\frac{-1}{k(e)!}(\psi_{h(e)}+\psi_{h'(e)})^{k(e)-1}. 
  \end{align*}
  Therefore,
  \begin{equation*}
    \Cont_{\Gamma}\mathsf{P}_{g}^g(\emptyset) = \frac{B_{2g}}{|\Aut(\Gamma)|}\sum_{e\in E(\Gamma)=g
    }(\xi_{\Gamma})_*\prod_{e\in E(\Gamma)}\left(\frac{-1}{k(e)!}(\psi_{h(e)}+\psi_{h'(e)})^{k(e)-1}  \right),
  \end{equation*}
  where we have used Faulhaber's formula 
  \begin{align*}
    \sum_{a=1}^{r-1}a^{m}=\frac{1}{m+1}\sum_{k=0}^{m}\binom{m+1}{k}B_{k}r^{m-k+1}    
  \end{align*}
  for any $m\geq1$.
\end{proof}

\begin{proposition}
\label{prop:lambda-g-lambda-g-1}
For any $g\geq2$, the following relation holds on $\overline{\mathcal{M}}_{g}$:
\begin{small}
\begin{align}\label{eqn:lambda-g-g-1}
&\lambda_g\lambda_{g-1}
\nonumber\\=&\frac{-B_{2g}}{2^{2g-1}}
\sum_{\Gamma\in G_{g,0}^\circ}\frac{1}{|\Aut(\Gamma)|}\sum_{\substack{\sum_{e\in E(\Gamma)}k(e)=g\\ k(e)\geq1, \forall e\in E(\Gamma)}}
(\xi_{\Gamma})_*\left(\prod_{e\in E(\Gamma)}\left(\frac{-1}{k(e)}\sum_{l(e)+m(e)=k(e)-1}\frac{\psi^{l(e)}_{h(e)}}{l(e)!}\frac{\psi_{h'(e)}^{m(e)}}{m(e)!}
 \right) \prod_{v\in V(\Gamma)}\mathsf{P}_{g(v)}^{g(v)}(0,0)\right)
\end{align}   
\end{small}
\end{proposition}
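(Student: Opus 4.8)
The plan is to start from the top-Chern-class formula \eqref{eqn:lambda-g-formula}, namely $\lambda_g=\frac{(-1)^g}{2^g}\mathsf{P}_g^g(\emptyset)$, to write $\mathsf{P}_g^g(\emptyset)$ as its sum of boundary contributions $\sum_{\Gamma\in G_{g,0}}\Cont_\Gamma\mathsf{P}_g^g(\emptyset)$ over stable graphs, and then to multiply the whole expression by $\lambda_{g-1}$. Since each $\Cont_\Gamma\mathsf{P}_g^g(\emptyset)$ is a pushforward $\xi_{\Gamma*}(\omega_\Gamma)$, the projection formula gives $\lambda_{g-1}\cdot\xi_{\Gamma*}(\omega_\Gamma)=\xi_{\Gamma*}(\xi_\Gamma^*\lambda_{g-1}\cdot\omega_\Gamma)$, so the entire computation reduces to understanding $\xi_\Gamma^*\lambda_{g-1}$ stratum by stratum. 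Iterating the Hodge-bundle splittings \eqref{eqn:prop-Hodge-1} and \eqref{eqn:prop-Hodge-2} along the edges of $\Gamma$ yields $\xi_\Gamma^*\mathbb{E}_g\cong\bigoplus_{v}p_v^*\mathbb{E}_{g(v)}\oplus\mathcal{O}^{\oplus h^1(\Gamma)}$, so that $\xi_\Gamma^*\lambda_{g-1}=c_{g-1}\big(\bigoplus_v\mathbb{E}_{g(v)}\big)=\sum_{\sum_v i_v=g-1,\,i_v\le g(v)}\prod_v\lambda_{i_v}(\mathbb{E}_{g(v)})$.

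Two independent reductions then cut the sum down to circular graphs. First, for the double-ramification data $A=\emptyset$, every separating edge (bridge) of $\Gamma$ forces its weighting to vanish: summing the vertex condition (iii) over the vertices on one side of the bridge makes all internal half-edge weights cancel in pairs, leaving $w(h)\equiv 0\bmod r$ for the bridge half-edge $h$, hence $w(h)=0$ and the edge factor $\frac{1-\exp(-w(h)w(h')(\psi_h+\psi_{h'}))}{\psi_h+\psi_{h'}}$ vanishes identically. Thus only bridgeless graphs survive in $\mathsf{P}_g^g(\emptyset)$. Second, on any surviving $\Gamma$ one has $\sum_v g(v)=g-h^1(\Gamma)$, while the constraints $\sum_v i_v=g-1$ and $i_v\le g(v)$ appearing in $\xi_\Gamma^*\lambda_{g-1}$ are solvable only when $h^1(\Gamma)=1$, in which case they force $i_v=g(v)$ for every $v$, giving the single term $\prod_v\lambda_{g(v)}(\mathbb{E}_{g(v)})$; graphs with $h^1\ge 2$ contribute $0$. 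A connected bridgeless graph with $h^1=1$ is precisely a single cycle, so only the circular graphs $\Gamma\in G_{g,0}^\circ$ remain, each of whose vertices is two-valent and hence lies in $\overline{\mathcal{M}}_{g(v),2}$ with $g(v)\ge 1$ and $\sum_v g(v)=g-1$.

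It remains to assemble the surviving terms. Because $\prod_v\lambda_{g(v)}$ is independent of the weighting $w$, it pulls out of the weighting sum, so the edge/weighting combinatorics is exactly the one already evaluated in Lemma~\ref{lem:con-gamma-lambda-g}: the sum $\frac{1}{r}\sum_{a=1}^{r-1}a^g(a-r)^g$ produces the Bernoulli factor $B_{2g}$ together with the edge factor $\prod_e\frac{-1}{k(e)!}(\psi_{h(e)}+\psi_{h'(e)})^{k(e)-1}$. Inserting $\lambda_{g(v)}=\frac{(-1)^{g(v)}}{2^{g(v)}}\mathsf{P}_{g(v)}^{g(v)}(0,0)$ from \eqref{eqn:lambda-g-formula} at each vertex and using $\sum_v g(v)=g-1$ collects the scalar $\frac{(-1)^g}{2^g}\cdot B_{2g}\cdot\frac{(-1)^{g-1}}{2^{g-1}}=\frac{-B_{2g}}{2^{2g-1}}$, matching the claimed prefactor. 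Finally the binomial identity $(\psi_{h}+\psi_{h'})^{k-1}=(k-1)!\sum_{l+m=k-1}\frac{\psi_h^l}{l!}\frac{\psi_{h'}^m}{m!}$ turns $\frac{-1}{k!}(\psi_{h}+\psi_{h'})^{k-1}$ into $\frac{-1}{k}\sum_{l+m=k-1}\frac{\psi_h^l}{l!}\frac{\psi_{h'}^m}{m!}$, reproducing the edge factor in \eqref{eqn:lambda-g-g-1}.

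The conceptual heart, and the main obstacle to watch, is the simultaneous use of the two reductions: neither alone suffices. The rank/degree argument on $\xi_\Gamma^*\lambda_{g-1}$ eliminates $h^1\ge 2$ and produces the vertex classes $\mathsf{P}_{g(v)}^{g(v)}(0,0)$, but by itself would still admit cycles with dangling trees, where it forces the top class $\lambda_{g(v)}$ even at one-valent leaf vertices which need not integrate to zero; these are removed only by the separating-edge vanishing. I would therefore take care to verify the bridge argument for weightings at the level needed (all $r$ large), to justify iterating \eqref{eqn:prop-Hodge-1}--\eqref{eqn:prop-Hodge-2} to the multi-edge splitting $\xi_\Gamma^*\mathbb{E}_g\cong\bigoplus_v p_v^*\mathbb{E}_{g(v)}\oplus\mathcal{O}^{\oplus h^1(\Gamma)}$, and to confirm that two-valency with $n=0$ forces every vertex genus to be at least one, so that no unstable $\overline{\mathcal{M}}_{0,2}$ vertex appears.
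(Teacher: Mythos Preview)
Your proposal is correct and follows essentially the same approach as the paper: write $\lambda_g\lambda_{g-1}=\frac{(-1)^g}{2^g}\lambda_{g-1}\cdot\sum_\Gamma\Cont_\Gamma\mathsf{P}_g^g(\emptyset)$, reduce to circular graphs by combining the separating-edge vanishing in Pixton's formula with the Hodge-bundle splitting of $\lambda_{g-1}$, invoke Lemma~\ref{lem:con-gamma-lambda-g} for the edge factor, and then apply \eqref{eqn:lambda-g-formula} at each vertex. Your write-up is in fact slightly more explicit than the paper's (you spell out $\xi_\Gamma^*\mathbb{E}_g\cong\bigoplus_v p_v^*\mathbb{E}_{g(v)}\oplus\mathcal{O}^{\oplus h^1(\Gamma)}$ and the bridge-weighting argument in detail), but the logic is the same.
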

\begin{proof}
  We compute:
  \begin{align*}
&\lambda_{g-1}\cdot\mathsf{P}_{g}^{g}(\emptyset)
=\lambda_{g-1}\cdot \sum_{\Gamma\in {G}_{g,0}}  \Cont_{\Gamma}\mathsf{P}_{g}^g(\emptyset)
=\sum_{\Gamma\in {G}_{g,0}^\circ} \lambda_{g-1}\cdot \Cont_{\Gamma}\mathsf{P}_{g}^g(\emptyset)
\\=&\sum_{\Gamma\in {G}_{g,0}^\circ} \frac{B_{2g}}{|\Aut(\Gamma)|}\sum_{\substack{\sum_{e\in E(\Gamma)}k(e)=g\\ k(e)\geq1 \forall e\in E(\Gamma)}
}(\xi_{\Gamma})_*\left(\prod_{e\in E(\Gamma)}\frac{-1}{k(e)!}(\psi_{h(e)}+\psi_{h'(e)})^{k(e)-1}  \cdot \prod_{v\in V(\Gamma)}\lambda_{g(v)}\right)
  \end{align*}
  In the second equality, we used both that, first, by definition, no
  stable graph with at least one separating edge contributes to
  $\mathsf{P}_g^g(\emptyset)$.
  Second, no graph with more than one loop contributes, since applying
  the properties of the Hodge bundle \eqref{eqn:prop-Hodge-1} and
  \eqref{eqn:prop-Hodge-2} to $\lambda_{g - 1}$, the contribution of
  such a graph would involve a Hodge class $\lambda_i$ for $i > g(v)$
  on some vertex moduli space $\overline{\mathcal{M}}_{g(v), n(v)}$.
  Finally, in the third equality, we again used the properties of the
  Hodge bundle, combined with Lemma~\ref{lem:con-gamma-lambda-g}.
  We next apply \eqref{eqn:lambda-g-formula} twice, and obtain
\begin{small}
\begin{align*}
&\lambda_g\lambda_{g-1} =\frac{(-1)^g}{2^g}\lambda_{g-1} \mathsf{P}_{g}^{g}(\emptyset)
\\=&\frac{(-1)^g}{2^g}\sum_{\Gamma\in {G}_{g,0}^\circ} \frac{B_{2g}}{|\Aut(\Gamma)|}\sum_{\substack{\sum_{e\in E(\Gamma)}k(e)=g\\ k(e)\geq1 \forall e\in E(\Gamma)}
}(\xi_{\Gamma})_*\left(\prod_{e\in E(\Gamma)}\frac{-1}{k(e)!}(\psi_{h(e)}+\psi_{h'(e)})^{k(e)-1}  \cdot \prod_{v\in V(\Gamma)}\lambda_{g(v)}\right)
\\=&\frac{(-1)^{2g-1}}{2^{2g-1}}\sum_{\Gamma\in {G}_{g,0}^\circ} \frac{B_{2g}}{|\Aut(\Gamma)|}\sum_{\substack{\sum_{e\in E(\Gamma)}k(e)=g\\ k(e)\geq1 \forall e\in E(\Gamma)}
}(\xi_{\Gamma})_*\left(\prod_{e\in E(\Gamma)}\frac{-1}{k(e)!}(\psi_{h(e)}+\psi_{h'(e)})^{k(e)-1}  \cdot \prod_{v\in V(\Gamma)}\mathsf{P}^{g(v)}_{g(v)}(0,0)\right).
\end{align*}    
\end{small}
After expanding each factor $(\psi_{h(e)}+\psi_{h'(e)})^{k(e)-1}$ via the binomial theorem, this yields \eqref{eqn:lambda-g-g-1}. 
\end{proof}


To illustrate this, we list formulae for $\lambda_g\lambda_{g-1}$ on $\overline{\mathcal{M}}_g$ in low genus:

\paragraph{Genus~2.}
\begin{equation*}
\lambda_2\lambda_1 = 
  -\frac{1}{480} \tikz{\NC \nn{A}{130}{{}} \ggg{1}{A}}_{\mathsf{P}_1^1(0,0)}.
\end{equation*}
\paragraph{Genus~3.}
\begin{align*}
\lambda_3 \lambda_2&= 
  \frac{1}{8064}  \tikz{\NC \nn{A}{130}{2} \ggg{2}{A}}_{\mathsf{P}_2^2(0,0)}
  + \frac{1}{8064}  \tikz{\NC \nn{A}{130}{{}} \nn{A}{50}{{}} \ggg{2}{A}}_{\mathsf{P}_2^2(0,0)}
  - \frac{1}{2688}  \empty_{\mathsf{P}_1^1(0,0)}\tikz{\DE \nn{A}{30}{{}} \ggg{1}{A} \ggg{1}{B}}_{\mathsf{P}_1^1(0,0)}.
\end{align*}
\paragraph{Genus~4.}
\begin{align*}
\lambda_4 \lambda_3&= 
  -\frac{1}{15360}  \tikz{\NC \nn{A}{130}{3} \ggg{3}{A}}_{\mathsf{P}_3^3(0,0)}
  - \frac{1}{5120} \tikz{\NC \nn{A}{130}{2} \nn{A}{50}{{}} \ggg{3}{A}}_{\mathsf{P}_3^3(0,0)}
  +\frac{1}{23040}  _{\mathsf{P}_1^1(0,0)}\tikz{\DE \nn{A}{30}{2} \ggg{1}{A} \ggg{2}{B}}_{\mathsf{P}_2^2(0,0)}
 +\frac{1}{30720} _{\mathsf{P}_1^1(0,0)}\tikz{\DE \nn{A}{30}{{}} \nn{A}{-30}{{}} \ggg{1}{A} \ggg{2}{B}}_{\mathsf{P}_2^2(0,0)}\\&
  +\frac{1}{11520} _{\mathsf{P}_1^1(0,0)}\tikz{\DE \nn{A}{30}{{}} \nn{B}{150}{{}} \ggg{1}{A} \ggg{2}{B}}_{\mathsf{P}_2^2(0,0)}+\frac{1}{15360}_{\mathsf{P}_1^1(0,0)}\tikz{\DE \nn{A}{30}{{}} \nn{B}{-150}{{}} \ggg{1}{A} \ggg{2}{B}}_{\mathsf{P}_2^2(0,0)}
  + \frac{1}{23040} _{\mathsf{P}_1^1(0,0)}\tikz{\DE \nn{B}{150}{2} \ggg{1}{A} \ggg{2}{B}}_{\mathsf{P}_2^2(0,0)}
  \\&+\frac{1}{30720} _{\mathsf{P}_1^1(0,0)}\tikz{\DE \nn{B}{150}{{}} \nn{B}{-150}{{}} \ggg{1}{A} \ggg{2}{B}}_{\mathsf{P}_2^2(0,0)}
  -\frac{1}{7680}_{\mathsf{P}_1^1(0,0)}\tikz{\T \nn{B}{180}{{}} \ggg{1}{B} \ggg{1}{0.2,0} \ggg{1}{C} \draw (C) node [anchor=south] {$\substack{\mathsf P_1^1(0,0)}$}}_{\mathsf{P}_1^1(0,0)}.
\end{align*}

\subsection{Proof of Theorem \ref{thm:deg-g-chern-chara}}
We first recall some standard results about the correspondence between descendant and ancestor invariants (cf. \cite[Appendix 2]{coates2007quantum}). 
Consider the stablization morphism
\[\St\colon \overline{\mathcal{M}}_{g,n}(X,\beta)\rightarrow \overline{\mathcal{M}}_{g,n}\]
which forgets the map $f$ in $(C; x_1,\dots,x_n; f ) \in  \overline{\mathcal{M}}_{g,n}(X,\beta)$ and stabilizes the curve
$(C; x_1,\dots,x_n)$
by contracting unstable components to points. 
Let
\[\pi_m\colon \overline{\mathcal{M}}_{g,n+m}(X,\beta)\rightarrow \overline{\mathcal{M}}_{g,n}(X,\beta)\]
be the morphism, which forgets the last $m$ markings, and stabilizes the curve.
For $1\leq i\leq n$, denote the  cotangent line bundle  along the $i$-th marked point over $\overline{\mathcal{M}}_{g,n}$ and $\overline{\mathcal{M}}_{g,n+m}(X,\beta)$ by $L_i$ and $\mathcal{L}_i$, respectively.
The bundles $\mathcal{L}_i$ and $\pi_m^*\St^*L_i$  over
$\overline{\mathcal{M}}_{g,n+m}(X,\beta)$ are identified outside the locus $D$ consisting of maps such that the $i$-th marked
point is situated on a component of the curve which gets collapsed by $\St \circ \pi_m$.

This locus $D$ is the image of the gluing map
\begin{align*}
   \iota\colon: \sqcup_{\substack{m_1+m_2=m\\ \beta_1+\beta_2=d}} \overline{\mathcal{M}}_{0,\{i\}+\bullet+m_1}(X,\beta_1)\times_{X} \overline{\mathcal{M}}_{g,[n]\setminus\{i\}+\circ+m_2}(X,\beta_2)\rightarrow \overline{\mathcal{M}}_{g,n+m}(X,\beta)
\end{align*}
where the two markings $\bullet$ and $\circ$ are glued together under map $\iota$.
We denote the domain of this map by $Y_{n,m,\beta}^{(i)}$. The virtual normal bundle to $D$ at a generic
point is $\Hom(\pi_m^*\St^*c_1(L_i), \mathcal{L}_i)$, and so $D$ is ``virtually Poincar\'e-dual'' to $c_1(\mathcal{L}_i)-\pi_m^*\St^*c_1(L_i)$ in the sense that
\begin{align}
\label{eqn:D-Y-vir}  \left(c_1(\mathcal{L}_i)-\pi_m^*\St^*c_1(L_i)\right) \cap [\overline{\mathcal{M}}_{n+m}{(X,\beta)}]^\vir =\iota_*[Y_{n,m,\beta}^{(i)}]^\vir
\end{align}

Define mixed type of ancestor and descendant  twisted correlator as follows:
\begin{align}
&\langle\langle\bar{\tau}_{j_1}\tau_{i_1}(\phi_1),\dots,\bar{\tau}_{j_n}\tau_{i_n}(\phi_n);c_{g,n}\rangle\rangle_g(\mathbf{t}(\psi))
\nonumber\\&:=\sum_{m,\beta\geq0}\frac{Q^\beta}{m!}
\int_{[\overline{\mathcal{M}}_{g,n+m}(X,\beta)]^\vir} \prod_{k=1}^{n}\left(\bar{\psi}_{k}^{j_k}\psi_k^{i_k}\ev_k^*\phi_k\right)\prod_{k=n+1}^{n+m}\ev_k^*\mathbf{t}(\psi_{k})\cdot \pi_m^* \St^*c_{g,n}.
\end{align}
for certain cohomology class $c_{g,n}\in H^*(\overline{\mathcal{M}}_{g,n})$, where $\psi_k:=c_1(\mathcal{L}_k)$ and $\bar{\psi}_k:=\pi_m^*\St^*c_1(L_k)$. 
Below we only focus the cases of $c_{g,n}=1$ or $\lambda_g$.

\begin{lemma}\label{lem:anc-desc}
For any $2g-2+n>0$,  on the big phase space, we have
\begin{align}\label{eqn:anc-des}
\langle\langle\bar\tau_{j_1}\tau_{i_1}(\phi_{\alpha_1}),\dots,\bar\tau_{j_n}\tau_{i_n}(\phi_{\alpha_n});\lambda_g\rangle\rangle_g =   \langle\langle  T^{j_1}(\tau_{i_1}(\phi_{\alpha_1})),\dots,  T^{j_n}(\tau_{i_n}(\phi_{\alpha_n}));\lambda_g\rangle\rangle_g 
\end{align}
for any $\{\phi_{\alpha_i}: i=1,\dots,n\}\subset H^*(X;\mathbb{C})$ and $T$ is the operator from \eqref{eq:T-operator}.
In particular,
\begin{align}\label{eqn:anc-des-2}
\langle\langle\bar\tau_{j_1}(\phi_{\alpha_1}),\dots,\bar\tau_{j_n}(\phi_{\alpha_n});\lambda_g\rangle\rangle_g =   \langle\langle  T^{j_1}(\phi_{\alpha_1}),\dots,  T^{j_n}(\phi_{\alpha_n});\lambda_g\rangle\rangle_g. 
\end{align}
Note that by \eqref{eqn:lambda-g-formula}, these two equations also hold true if we replace $\lambda_g$ by $\mathsf{P}_g^g(0,..,0)$. 
\end{lemma}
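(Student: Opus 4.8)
The plan is to establish the general identity~\eqref{eqn:anc-des} by induction on the total ancestor degree $\sum_{k=1}^n j_k$, converting the ancestor classes $\bar\psi$ into descendant classes $\psi$ one power at a time. The special case~\eqref{eqn:anc-des-2} is then just the instance $i_1=\dots=i_n=0$, and the final assertion with $\lambda_g$ replaced by $\mathsf{P}_g^g(0,\dots,0)$ follows at once from the proportionality~\eqref{eqn:lambda-g-formula}, since both sides are linear in the twisting class. The base case $\sum_k j_k=0$ is trivial because $\bar\tau_0=\mathrm{id}=T^0$.

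The engine of the induction is the comparison formula~\eqref{eqn:D-Y-vir}, which I read as $\bar\psi_i=\psi_i-[D]$ with $[D]=\iota_*[Y_{n,m,\beta}^{(i)}]^\vir$ the class of the locus on which the $i$-th marking sits on a rational component contracted by $\St\circ\pi_m$. To convert one power I write $\bar\psi_i^{\,j_k}=\bar\psi_i^{\,j_k-1}(\psi_i-[D])$. In the term containing $\psi_i$, this descendant factor merges with the existing $\psi_i^{i_k}$ to give $\psi_i^{i_k+1}$, so it replaces the insertion $\tau_{i_k}(\phi_{\alpha_k})$ by $\tau_+(\tau_{i_k}(\phi_{\alpha_k}))=\tau_{i_k+1}(\phi_{\alpha_k})$ while lowering the ancestor exponent to $\bar\tau_{j_k-1}$.

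The heart of the argument is the boundary term $\bar\psi_i^{\,j_k-1}\cdot[D]$, for which I would use two facts. First, on $D$ the $i$-th marking lies on a genus-$0$ bubble glued to the genus-$g$ component; since $\bar\psi_i=\pi_m^*\St^* c_1(L_i)$ and stabilization contracts the bubble, the restriction of $\bar\psi_i$ to $D$ equals the ancestor class $\bar\psi_\circ$ at the node on the genus-$g$ side, so the surviving power $\bar\psi_i^{\,j_k-1}$ transfers to $\bar\tau_{j_k-1}$ on the glued insertion while the bubble itself carries no ancestor class. Second, the node joining the bubble to the main component is separating with a genus-$0$ factor, so by the splitting~\eqref{eqn:prop-Hodge-1} together with $\mathbb{E}_0=0$ we have $\iota^*\lambda_g=\lambda_g$ on the genus-$g$ component, and the twisting class stays entirely on the main vertex. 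Summing over the dilaton-shifted background insertions $\mathbf{t}(\psi)$ distributed on the bubble and splitting the diagonal at the node as $\eta^{\alpha\beta}\phi_\alpha\otimes\phi_\beta$, the genus-$0$ bubble contributes exactly the two-point double bracket $\langle\langle\tau_{i_k}(\phi_{\alpha_k})\phi^\alpha\rangle\rangle_0$, with $\phi_\alpha$ glued onto the genus-$g$ correlator. Thus a single conversion reads
\[
  \bar\tau_{j_k}\tau_{i_k}(\phi_{\alpha_k})\ \longrightarrow\ \bar\tau_{j_k-1}\tau_{i_k+1}(\phi_{\alpha_k})-\langle\langle\tau_{i_k}(\phi_{\alpha_k})\phi^\alpha\rangle\rangle_0\,\bar\tau_{j_k-1}(\phi_\alpha),
\]
which is precisely $\bar\tau_{j_k-1}$ applied to $T(\tau_{i_k}(\phi_{\alpha_k}))$ by the definition~\eqref{eq:T-operator}. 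Feeding the two resulting correlators (of strictly smaller total ancestor degree) into the induction hypothesis and using the linearity of $T$ together with the fact that genus-$0$ brackets are scalar coefficients, these collapse into $T^{j_k}(\tau_{i_k}(\phi_{\alpha_k}))$, completing the induction.

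I expect the main obstacle to be the careful bookkeeping of the boundary term: verifying that the restriction of $\bar\psi_i$ transfers the ancestor power cleanly to the node (so that no descendant class is created on the bubble and no ancestor class survives there), and that the summation over the genus-$0$ bubble, including all distributions of the background and the stability requirement, assembles exactly into the two-point genus-$0$ double bracket entering $T$. The role of the Hodge twist is comparatively mild: the only new input beyond the classical $T$-operator computation of \cite{liu2002quantum} is the observation, via~\eqref{eqn:prop-Hodge-1} and $\mathbb{E}_0=0$, that $\lambda_g$ (equivalently $\mathsf{P}_g^g$) restricts trivially to the genus-$0$ bubble, so the whole argument proceeds as in the untwisted case with the twisting class passively riding on the genus-$g$ vertex.
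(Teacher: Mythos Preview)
Your proposal is correct and follows essentially the same argument as the paper: both use the comparison formula~\eqref{eqn:D-Y-vir} to trade one power of $\bar\psi$ for $\psi$ minus a boundary term, observe via~\eqref{eqn:prop-Hodge-1} that $\lambda_g$ restricts entirely to the genus-$g$ side of the bubble so the genus-$0$ factor contributes exactly $\langle\langle\tau_{i_k}(\phi_{\alpha_k})\phi^\alpha\rangle\rangle_0$, and then iterate. The paper organizes the iteration marking-by-marking rather than by total ancestor degree, but this is only a cosmetic difference; your write-up is in fact more explicit about the transfer of $\bar\psi_i^{\,j_k-1}$ to the node and the bookkeeping on the bubble than the paper's proof.
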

\begin{proof}
  We will focus on the first marked point, and for simplicity, suppress the content of the other marked
  points from our notation.
  By \eqref{eqn:D-Y-vir} and \eqref{eqn:prop-Hodge-1}, the following identity holds for mixed type twisted correlators on the big phase space
\begin{align*}
\langle\langle\bar{\tau}_{j_1-1}\tau_{i_1+1}(\phi),\dots;\lambda_g\rangle\rangle_g=
\langle\langle\bar{\tau}_{j_1}\tau_{i_1}(\phi),\dots;\lambda_g\rangle\rangle_g
+\langle\langle \tau_{i_1}(\phi),\phi_\beta\rangle\rangle_0\langle\langle\bar{\tau}_{j_1-1}(\phi^\beta),\dots;\lambda_g\rangle\rangle_g
\end{align*}
for any $\phi\in H^*(X;\mathbb{C})$.

This implies
\begin{align*}
&\langle\langle\bar{\tau}_{j_1}\tau_{i_1}(\phi_{\alpha_1}),\dots;\lambda_g\rangle\rangle_g 
=\langle\langle\bar\tau_{j_1-1}\left(\tau_{i_1+1}(\phi_{\alpha_1})-\langle\langle\tau_{i_1}(\phi_{\alpha_1}),\phi_\beta\rangle\rangle_0\phi^\beta\right),\dots;\lambda_g\rangle\rangle_g
\\=&\langle\langle\bar\tau_{j_1-1}T(\tau_{i_1}(\phi_{\alpha_1})),\dots;\lambda_g\rangle\rangle_g.
\end{align*}
Repeating this $j_1 - 1$ more times, we get
\begin{align*}
  \langle\langle\bar{\tau}_{j_1}\tau_{i_1}(\phi_{\alpha_1}),\dots;\lambda_g\rangle\rangle_g 
  =\langle\langle T^{j_1}(\tau_{i_1}(\phi_{\alpha_1})),\dots;\lambda_g\rangle\rangle_g
\end{align*}
Similar analysis for the other markings, yields \eqref{eqn:anc-des}.
\end{proof}

For any $g, n$ such that $2g - 2 + n > 0$ and $\mathbf{t}=\sum_{n,\alpha}t_n^\alpha z^n\phi_\alpha\in H^*(X)[[\{t_n^\alpha\}]][[z]]$, define a homomorphism $\Omega^{\mathbf{t}}_{g,n}\colon H^*(X)[[\psi]]^{\otimes n}\rightarrow H^*(\overline{\mathcal{M}}_{g,n})[[Q]][[\{t_n^\alpha\}]]$ via
\begin{align*}
\Omega^{\mathbf{t}}_{g,n}(\phi_{\alpha_1}\psi_1^{i_1},\dots,\phi_{\alpha_n}\psi_n^{i_n})  
=\sum_{m,\beta\geq0}\frac{Q^\beta}{m!}\St_*{\pi_m}_*\left(\prod_{k=1}^{n}\ev_k^*\phi_{\alpha_k}\psi_k^{i_k}\prod_{k=n+1}^{n+m}\ev_i^*\mathbf{t}(\psi_{k})\right).
\end{align*}
From the ``cutting edges'' axiom of virtual cycles of moduli of stable maps to $X$, this system of homomorphisms satisfies the splitting axioms:
\begin{align}\label{eqn:split-ax-1}
\iota_{g_1,g_2}^*\Omega_{g,n}^{\mathbf{t}}(\phi_{\alpha_1}\psi_1^{i_1},\dots,\phi_{\alpha_n}\psi_{n}^{i_n})
=\sum_{\beta}\Omega_{g_1,n_1+1}^{\mathbf{t}}(\otimes_{k\in I_1}\phi_{\alpha_k}\psi_k^{i_k},\phi_\beta)\otimes \Omega_{g_2,n_2+1}^{\mathbf{t}}(\otimes_{k\in I_2}\phi_{\alpha_k}\psi_k^{i_k}\otimes \phi^\beta)
\end{align}
and
\begin{align}\label{eqn:split-ax-2}
\iota_{g-1}^*\Omega_{g,n}^{\mathbf{t}}(\phi_{\alpha_1}\psi_1^{i_1},\dots,\phi_{\alpha_n}\psi_n^{i_n})
=\sum_{\beta}\Omega^{\mathbf{t}}_{g-1,n+2}(\phi_{\alpha_1}\psi_1^{i_1},\dots,\phi_{\alpha_n}\psi_n^{i_n},\phi_\beta,\phi^\beta)
\end{align}
where $\iota_{g_1,g_2}\colon \overline{\mathcal{M}}_{g_1,n_1+1}\times \overline{\mathcal{M}}_{g_2,n_2+1}\rightarrow \overline{\mathcal{M}}_{g,n}$ and $\iota_{g-1}\colon \overline{\mathcal{M}}_{g-1,n+2}\rightarrow \overline{\mathcal{M}}_{g,n}$ are the two canonical gluing maps.
Note that, by definition, $\Omega$ is related to the double bracket of descendant potential via integration on the moduli space of curve, 
i.e.
\begin{align*}
\int_{\overline{\mathcal{M}}_{g,n}}\Omega_{g,n}^{\mathbf{t}}(v_1\psi_1^{i_1},\dots,v_n\psi_{n}^{i_n})
=\langle\langle\tau_{i_1}(v_1),\dots,\tau_{i_n}(v_n)\rangle\rangle_g(\mathbf{t})
\end{align*}

Now let us prove Theorem~\ref{thm:deg-g-chern-chara}.
On the one hand, taking $l=g$ and $\mathbf{s}=0$ in \eqref{eqn:grr-FP-l>g} yields
\begin{align}
&\frac{(2g)!}{B_{2g}}
\langle\langle  \ch_{2g-1}(\mathbb{E})\rangle\rangle_{g}
\nonumber\\=&-\Tilde{t}_n^\alpha\langle\langle\tau_{n+2g-1}(\phi_\alpha)\rangle\rangle_g+\frac{1}{2}\sum_{i=0}^{2g-2}(-1)^{i}
\left\{\langle\langle\tau_i(\phi_\alpha)\tau_{2g-2-i}(\phi^\alpha)\rangle\rangle_{g-1}
+\sum_{h=0}^{g}\langle\langle\tau_{i}(\phi_\alpha)\rangle\rangle_h
\langle\langle\tau_{2g-2-i}(\phi^\alpha)\rangle\rangle_{g-h}\right\}
\end{align}
On the other hand, by combining \eqref{eqn:ch-2g-1-hodge} and Proposition~\ref{prop:lambda-g-lambda-g-1}, we obtain
\begin{footnotesize}
\begin{align*}
&\frac{(2g)!}{B_{2g}}
\ch_{2g-1}(\mathbb{E})
\nonumber\\=&
\frac{(-1)^{g}g}{2^{2g-2}}
\sum_{\Gamma\in G_{g,0}^\circ}\frac{1}{|\Aut(\Gamma)|}\sum_{\substack{\sum_{e\in E(\Gamma)}k(e)=g\\ k(e)\geq1, \forall e\in E(\Gamma)}}
(\xi_{\Gamma})_*\left(\prod_{e\in E(\Gamma)}\left(\frac{-1}{k(e)}\sum_{l(e)+m(e)=k(e)-1}\frac{\psi^{l(e)}_{h(e)}}{l(e)!}\frac{\psi_{h'(e)}^{m(e)}}{m(e)!}
 \right) \prod_{v\in V(\Gamma)}\mathsf{P}_{g(v)}^{g(v)}(0,0)\right).
 \end{align*}
\end{footnotesize}
Then combining with Lemma~\ref{lem:anc-desc}, the splitting
axioms~\eqref{eqn:split-ax-1} and \eqref{eqn:split-ax-2}, we get
\begin{small}
\begin{align*}
 &\frac{(2g)!}{B_{2g}}
\langle\langle  \ch_{2g-1}(\mathbb{E})\rangle\rangle_{g}
=\frac{(2g)!}{B_{2g}}
\int_{\overline{\mathcal{M}}_g}\Omega_{g,0}^{\mathbf{t}}\cdot \ch_{2g-1}(\mathbb{E})
\\=&\frac{(-1)^{g}g}{2^{2g-2}}
\sum_{\Gamma\in G_{g,0}^\circ}\frac{1}{|\Aut(\Gamma)|}\sum_{\substack{\sum_{e\in E(\Gamma)}k(e)=g\\ k(e)\geq1, \forall e\in E(\Gamma)}}
\int_{\overline{\mathcal{M}}_{\Gamma}}\xi_{\Gamma}^*\Omega_{g,0}^{\mathbf{t}}\cdot \Bigg(\prod_{e\in E(\Gamma)}\left(\frac{-1}{k(e)}\sum_{l(e)+m(e)=k(e)-1}\frac{\psi^{l(e)}_{h(e)}}{l(e)!}\frac{\psi_{h'(e)}^{m(e)}}{m(e)!}
 \right) 
 \\&\hspace{260pt}\cdot\prod_{v\in V(\Gamma)}\mathsf{P}_{g(v)}^{g(v)}(0,0)\Bigg)
\\=&\frac{(-1)^{g}g}{2^{2g-2}}
\sum_{m = 1}^{g-1} \frac{1}{2m}\sum_{\substack{k_1+\dots+k_m=g\\ k_1,\dots,k_m\geq1}} \sum_{\substack{g_1+\dots+g_m=g-1\\ g_1,\dots,g_m\geq1}}
\sum_{\alpha_{1},\dots,\alpha_{m}=1}^{N}\prod_{i=1}^{m}\frac{-1}{k_i!}\sum_{l_i=0}^{k_i-1}\binom{k_i-1}{l_i}
\langle\langle T^{l_i}(\phi_{\alpha_{i-1}})T^{k_i-1-l_i}(\phi^{\alpha_i});\mathsf{P}_{g_i}^{g_i}(0,0)\rangle\rangle_{g_i}
\end{align*}
\end{small}
where $\alpha_0=\alpha_m$.
In the last equality, we took into account the symmetries from stable circular graphs $\Gamma$ and indices $\{k_i\}_{i=1}^{m},\{g_i\}_{i=1}^{m}$.
Therefore, we prove Theorem~\ref{thm:deg-g-chern-chara}.

\subsection{Reformulation of Theorem \ref{thm:deg-g-chern-chara}}\label{subsec:Reformulation of Theorem 3}

In this subsection, explain how to use Pixton's formula to rewrite each  factor of the form $\langle\langle T^a(v_1)T^b(v_2);\mathsf{P}_{g}^{g}(0,0)\rangle\rangle_g$ in the right hand side of \eqref{eqn:LP-push-g} in terms of pure descendant Gromov-Witten invariants.
For convenience, let $\mathsf{P}^g_{\Gamma}(0,0)$ denote the constant term of the following polynomial in $r$ for sufficiently large $r$
\begin{align}\label{eqn:PgGamma00}
\mathsf{P}^g_{\Gamma}(0,0):=\text{Coeff}_{r^{0}}
\left[\sum_{w\in \mathsf{W}_{\Gamma,r}}
\frac{1}{|\Aut(\Gamma)| } 
\frac{1}{r^{h^1(\Gamma)}}
\prod_{e=(h,h')\in E(\Gamma)}
\frac{1-\exp(-w(h)w(h')(\psi_h+\psi_{h'}))}{\psi_h + \psi_{h'}}\right]
\end{align}
for any $\Gamma\in G_{g,2}$.
We may view $\mathsf{P}^g_{\Gamma}(0,0)$ as a polynomial in the
$\psi$-classes $\{\psi_h:h\in H(\Gamma)\}$.

We next define a linear operator $P \mapsto \langle\langle T^a(v_1)T^b(v_2); P\rangle\rangle_\Gamma$ that turns
any polynomial in the $\psi$-classes $\{\psi_h:h\in H(\Gamma)\}$ into a product of correlation functions.
Given a monomial $P = c \cdot \prod_{h\in H(\Gamma)} \psi_h^{a_h}$, we let
$\langle\langle T^a(v_1)T^b(v_2); P\rangle\rangle_\Gamma$ to be the contraction of the multilinear form
\begin{equation*}
  c \prod_{v\in V(\Gamma)} \langle\langle\ldots\rangle\rangle_{g(v)}
\end{equation*}
with arguments corresponding to half-edges $h \in H(\Gamma)$ via:
\begin{itemize}
\item for the argument corresponding to the first and second leg, use the vectors $T^a(v_1)$ and $T^b(v_2)$, respectively;
\item for each edge $e = (h,  h')$, use the bivector $T^{a_h}(\phi_\alpha)\otimes T^{a_{h'}}(\phi^\alpha)$ for the corresponding arguments.
\end{itemize}
\begin{proposition}\label{prop:TaTb}
\begin{equation*}
\langle\langle T^a(v_1)T^b(v_2);\mathsf{P}_{g}^{g}(0,0)\rangle\rangle_g
=\sum_{\Gamma\in {G}_{g,2}} \langle\langle T^a(v_1)T^b(v_2); \mathsf{P}^g_{\Gamma}(0,0)\rangle\rangle_\Gamma.
\end{equation*}
\end{proposition}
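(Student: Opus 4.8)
The plan is to prove the identity by unfolding the twisted correlator on the left-hand side into a sum of geometric contributions indexed by the stable graphs $\Gamma\in G_{g,2}$, and then to recognize each contribution as the corresponding $\Gamma$-bracket on the right. Both sides are linear in the twisting data, so after decomposing $\mathsf{P}_g^g(0,0)$ into boundary pushforwards and each pushed-forward class into monomials in the half-edge $\psi$-classes, it suffices to treat one graph $\Gamma$ and one monomial $P=c\prod_h\psi_h^{a_h}$ at a time. The three ingredients are: Pixton's formula, which writes the twisting class as a sum over $G_{g,2}$ of boundary pushforwards; the projection formula together with the splitting axioms \eqref{eqn:split-ax-1} and \eqref{eqn:split-ax-2}, which factorize the correlator over the vertices of $\Gamma$; and the ancestor--descendant relation of Lemma~\ref{lem:anc-desc}, which converts the resulting ancestor insertions into the $T$-operator insertions appearing in the $\Gamma$-bracket.

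First I would record the graph decomposition of the twisting class. Because the sum over stable graphs in Pixton's formula is finite in each fixed degree, and because extracting the constant term in $r$ and the pushforward $\xi_{\Gamma*}$ both commute with this finite sum, formula \eqref{eqn:PgGamma00} yields $\mathsf{P}_g^g(0,0)=\sum_{\Gamma\in G_{g,2}}(\xi_\Gamma)_*\mathsf{P}_\Gamma^g(0,0)$, where the leg factors $\exp(a_i^2\psi_{h_i})$ are absent precisely because $A=(0,0)$, and $\mathsf{P}_\Gamma^g(0,0)$ is read as a polynomial in the half-edge classes $\{\psi_h\}$ on $\overline{\mathcal{M}}_\Gamma$. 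Next I would use Lemma~\ref{lem:anc-desc} applied to the whole class $\mathsf{P}_g^g(0,0)$ (its statement is valid with $\mathsf{P}_g^g(0,0)$ in place of $\lambda_g$ by \eqref{eqn:lambda-g-formula}) to rewrite the left-hand side as $\langle\langle\bar\tau_a(v_1)\bar\tau_b(v_2);\mathsf{P}_g^g(0,0)\rangle\rangle_g$. Expressing this through the homomorphism $\Omega_{g,2}^{\mathbf{t}}$ and substituting the graph decomposition, the projection formula for $\xi_\Gamma$ transports the integral onto $\overline{\mathcal{M}}_\Gamma$, against the class $\psi_1^a\psi_2^b\,\mathsf{P}_\Gamma^g(0,0)$; the leg ancestor classes pull back to the ordinary $\psi$-classes at the two legs on the vertex carrying them.

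Then I would factorize over vertices. Applying \eqref{eqn:split-ax-1} and \eqref{eqn:split-ax-2} once for each edge of $\Gamma$ splits $\xi_\Gamma^*\Omega_{g,2}^{\mathbf{t}}(v_1,v_2)$ into $\prod_v\Omega_{g(v),n(v)}^{\mathbf{t}}$, with each edge $e=(h,h')$ contributing a contraction $\sum_\beta\phi_\beta\otimes\phi^\beta$ of its two half-edge arguments. Integrating each vertex factor against its $\psi$-powers --- the leg powers $a,b$ coming from $\xi_\Gamma^*\psi_i$ and the half-edge powers $a_h$ coming from the monomial $P$ --- and using the identity $\int_{\overline{\mathcal{M}}_{g(v),n(v)}}\Omega^{\mathbf{t}}_{g(v),n(v)}(\dots)\cdot\psi^{\dots}=\langle\langle\bar\tau_{\dots}(\dots)\rangle\rangle_{g(v)}$, each vertex produces an \emph{untwisted} ancestor correlator whose insertions are $\bar\tau_a(v_1),\bar\tau_b(v_2)$ on the legs and $\bar\tau_{a_h}(\phi_\alpha)$ on the half-edges. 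Finally, converting these ancestor insertions back into $T$-operators vertex by vertex reproduces exactly the multilinear form $c\prod_v\langle\langle\dots\rangle\rangle_{g(v)}$ with the prescribed insertions $T^a(v_1),T^b(v_2)$ and edge bivectors $T^{a_h}(\phi_\alpha)\otimes T^{a_{h'}}(\phi^\alpha)$, that is, the $\Gamma$-bracket $\langle\langle T^a(v_1)T^b(v_2);P\rangle\rangle_\Gamma$; summing over monomials and over $\Gamma$ then gives the claim.

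The step I expect to be the main obstacle is the last one: justifying the ancestor-to-$T$ conversion at each vertex, since the vertex correlators are \emph{untwisted}, whereas Lemma~\ref{lem:anc-desc} is stated for the $\lambda_g$-twisted theory. The resolution is to observe that the inductive step in the proof of Lemma~\ref{lem:anc-desc} rests only on \eqref{eqn:D-Y-vir} and on the fact that the boundary $D$ produced there is a genus-zero rational tail; the Hodge-bundle splitting \eqref{eqn:prop-Hodge-1} is invoked only to see that the twisting class restricts trivially to that tail, which is automatic when there is no twisting class at all. Hence the same induction gives $\langle\langle\bar\tau_{j_1}(\gamma_1),\dots\rangle\rangle_{g(v)}=\langle\langle T^{j_1}(\gamma_1),\dots\rangle\rangle_{g(v)}$ for the untwisted vertex correlators, provided each vertex is stable, i.e.\ $2g(v)-2+n(v)>0$, which holds by definition of a stable graph. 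The remaining work is purely combinatorial bookkeeping: matching the automorphism weight $1/|\Aut(\Gamma)|$ (carried inside $\mathsf{P}_\Gamma^g(0,0)$), the edge contractions, and the monomial coefficients between the geometric splitting and the definition of the $\Gamma$-bracket.
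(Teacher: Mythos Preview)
Your proposal is correct and follows essentially the same route as the paper: convert $T$-insertions to ancestors via Lemma~\ref{lem:anc-desc}, rewrite through $\Omega_{g,2}^{\mathbf{t}}$, decompose $\mathsf{P}_g^g(0,0)$ over $G_{g,2}$, apply the projection formula and the splitting axioms to factorize over vertices, and then translate the half-edge $\psi$-powers back into $T$-operators. Your explicit justification that the ancestor-to-$T$ conversion at each vertex goes through in the untwisted setting is in fact more careful than the paper's own treatment, which simply asserts that ``the cotangent line class $\psi$ is translated to descendants using the operator $T$.''
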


\begin{proof}
By \eqref{eqn:anc-des-2} in Lemma~\ref{lem:anc-desc} and \eqref{eqn:lambda-g-formula}, together with definition of $\Omega^{\mathbf{t}}$, we have
\begin{align*}
&\langle\langle T^a(v_1)T^b(v_2);\mathsf{P}_{g}^{g}(0,0)\rangle\rangle_g=\langle\langle \bar{\tau}_a(v_1)\bar{\tau}_b(v_2);\mathsf{P}_{g}^{g}(0,0)\rangle\rangle_g \\=&\sum_{m,\beta\geq0}\frac{Q^\beta}{m!}
\int_{[\overline{\mathcal{M}}_{g,m+2}(X,\beta)]^\vir} \left(\bar{\psi}_{1}^{a}\ev_1^* v_1\right)\left(\bar{\psi}_{2}^{b}\ev_2^* v_2\right)\prod_{k=3}^{m+2}\ev_k^*\mathbf{t}(\psi_{k})\cdot \pi_m^* \St^*\mathsf{P}_{g}^{g}(0,0)
\\=&\int_{\overline{\mathcal{M}}_{g,2}}\psi_1^a\psi_2^b\cdot \Omega^{\mathbf{t}}_{g,2}(v_1,v_2)\cdot \mathsf{P}_g^g(0,0). 
\end{align*}
By the projection formula, this equals
\begin{align*}
&\sum_{\Gamma\in {G}_{g,2}}\int_{\overline{\mathcal{M}}_{g,2}}\psi_1^a\psi_2^b\cdot\Omega^{\mathbf{t}}_{g,2}(v_1,v_2)\cdot {\xi_{\Gamma}}_*\mathsf{P}^g_{\Gamma}(0,0)
=\sum_{\Gamma\in {G}_{g,2}}\int_{\overline{\mathcal{M}}_{\Gamma}}\psi_1^a\psi_2^b\cdot {\xi_{\Gamma}}^*\Omega^{\mathbf{t}}_{g,2}(v_1,v_2)\cdot \mathsf{P}^g_{\Gamma}(0,0).
\end{align*}
Then by the splitting axioms~\eqref{eqn:split-ax-1}, \eqref{eqn:split-ax-2} and  equation~\eqref{eqn:PgGamma00}, we can express each summand as a product of descendant correlation functions according to the operator $P \mapsto \langle\langle T^a(v_1)T^b(v_2); P\rangle\rangle_\Gamma$.
In the process, the two marked points are assigned the vectors $v_1,v_2$, and the cotangent line class $\psi$ is translated to descendants using the operator $T$.
Each node is translated into a sum of pairs of primary vector fields $\phi_\alpha$ and $\phi^\alpha$.
\end{proof}
\subsection{Proof of Corollary~\ref{cor:LP-push-g}}
The following vanishing indentity for the Gromov--Witten inavriants of any smooth projective variety was first conjectured by  K.~Liu and H.~Xu in \cite{liu2009proof}, and was later proven in \cite{liu2011new}.
\begin{align}\label{eqn:grr-FP-l=g-LX}
0=-\Tilde{t}_n^\alpha\langle\langle\tau_{n+2g-1}(\phi_\alpha)\rangle\rangle_g+\frac{1}{2}\sum_{i=0}^{2g-2}(-1)^{i}
\left\{\sum_{h=0}^{g}\langle\langle\tau_{i}(\phi_\alpha)\rangle\rangle_h
\langle\langle\tau_{2g-2-i}(\phi^\alpha)\rangle\rangle_{g-h}\right\}
\end{align}
for $n\geq0$.

Combining \eqref{eqn:deg-g-chern-chara} and \eqref{eqn:grr-FP-l=g-LX}, we obtain
\begin{small}
\begin{align}\label{eqn:push-for-tau}
 &\sum_{i=0}^{2g-2}(-1)^{i}\langle\langle\tau_{i}(\phi_\alpha)\tau_{2g-2-i}(\phi^\alpha)\rangle\rangle_{g-1} \nonumber 
\\=&\frac{(-1)^{g}g}{2^{2g-2}}
\sum_{m=1}^{g-1} \frac{1}{m}\sum_{\substack{k_1+\dots+k_m=g\\ k_1,\dots,k_m\geq1}} \sum_{\substack{g_1+\dots+g_m=g-1\\ g_1,\dots,g_m\geq1}}
\sum_{\alpha_1,\dots,\alpha_m=1}^{N}\prod_{i=1}^{m}\frac{-1}{k_i!}\sum_{l_i=0}^{k_i-1}\binom{k_i-1}{l_i}
\langle\langle T^{l_i}(\phi_{\alpha_{i-1}})T^{k_i-1-l_i}(\phi^{\alpha_i});\mathsf{P}_{g_i}^{g_i}(0,0)\rangle\rangle_{g_i}.
\end{align}
\end{small}
Recall that for any arbitrary contravariant tensors $P$ and $Q$ on the big phase space, the following formula
was proved in \cite[Proposition 3.2]{liu2011certain}:
\begin{align*}
    \sum_{i=0}^{m}(-1)^{i}P(\tau_i(\phi_\alpha))Q(\tau_{m-i}(\phi^\alpha))=\sum_{i=0}^{m}(-1)^{i}P(T^i(\phi_\alpha))Q(T^{m-i}(\phi^\alpha))
\end{align*}
for $m\geq0$. 
In particular, if we take $P(W)=Q(W)=W$, 
the left hand side of \eqref{eqn:push-for-tau} becomes
\[\sum_{i=0}^{2g-2}(-1)^{i}\langle\langle T^{i}(\phi_\alpha)T^{2g-2-i}(\phi^\alpha)\rangle\rangle_{g-1}.\]
This completes the proof of Corollary~\ref{cor:LP-push-g}.  

\section{Push-forward relations on $\overline{\mathcal{M}}_{g}$ for $(g\geq2)$}\label{sec:push-forward relation}

\subsection{Push-forward relations}
First, we introduce our  definition of push-forward relations, which is slightly different from the notion proposed in \cite[Section~4.9.2]{pandharipande2015calculus}.  Consider the gluing map,
\[\iota\colon \overline{\mathcal{M}}_{g,2}\rightarrow \overline{\mathcal{M}}_{g+1}\]
with image equal to the boundary divisor $\Delta_0\subset \overline{\mathcal{M}}_{g+1}$ of curves with a nonseparating node. 
A push-forward relation can be understand as an  element of $\ker(\iota_*)$,
where $\iota_*\colon R^*(\overline{\mathcal{M}}_{g,2})\rightarrow R^*(\overline{\mathcal{M}}_{g+1})$. 
Push-forward relations among descendant stratum classes  yield universal equations for genus less than or equal to $g$   Gromov--Witten invariants, despite the fact that the relations are embedded in  genus $g+1$ moduli space of curves.
A simple family of push-forward relations in $R^{2g+r+1}(\overline{\mathcal{M}}_{g+1})$ for $(g\geq1, r\geq1)$ were first found in \cite{liu2011new}
\begin{align}\label{eqn:push-pand-liu}
\sum_{a+b=2g+r}(-1)^{a}\iota_*\left({\psi_1}^{a}{\psi_2}^{b}\right)=0.    
\end{align}
Comparing equations~\eqref{eqn:LP-push-g-class} and \eqref{eqn:push-pand-liu}, we see \eqref{eqn:LP-push-g-class} gives the correction terms explicitly for the left hand side of equation~\eqref{eqn:push-pand-liu} for $r=0$. 

\subsection{Proof of Theorem~\ref{thm:LP-push-g-class}}
Let $j\colon\Delta(g_1,g_2)\cong\overline{\mathcal{M}}_{g_1,2}\times \overline{\mathcal{M}}_{g_2,2}\rightarrow\overline{\mathcal{M}}_{g+1,2}$ denote the boundary divisor parametrizing nodal curves with two components $C_1$ and $C_2$ with genus $g_1$ and $g_2$, where $g_1+g_2=g+1$.
Let $\psi_{*i}$ be the cotangent line class at the node along the curve $C_i$ for $i=1,2$.  
By the relation \cite[Proposition 2]{liu2011new} in genus $g+1$, we have
\begin{align}\label{eqn:LP-prop2}
-\psi_1^{2g+3}-\psi_2^{2g+3}+\sum_{g_1+g_2=g+1: g_i>0}\sum_{a+b=2g+2}(-1)^{a}j_*\left(\psi_{*1}^a\psi_{*2}^b\cap [\Delta(g_1,g_2)]\right)=0    
\end{align}
in $R^{2g+3}(\overline{\mathcal{M}}_{g+1,2})$.
Pushing forward equation~\eqref{eqn:LP-prop2} to $\overline{\mathcal{M}}_{g+1}$ by forgetting the two marked points,  via string equation,  we have the following relation on $\overline{\mathcal{M}}_{g+1}$
\begin{align}
\kappa_{2g+1}+\frac{1}{2}\sum_{g_1+g_2=g,g_i>0}\sum_{a+b=2g}(-1)^{a}j_*\left(\psi_{*1}^a\psi_{*2}^b\cap [\Delta(g_1,g_2)]\right)=0.   
\end{align}
Meanwhile, by Mumford’s Grothendieck--Riemann--Roch calculation (cf. \cite{mumford1983towards}), the following holds on $\overline{\mathcal{M}}_{g+1}$:
\begin{align*}
  &\frac{(2g+2)!}{B_{2g+2}}\ch_{2g+1}(\mathbb{E}) \\
  =&\kappa_{2g+1}+\frac{1}{2}\iota_*\left(\sum_{a+b=2g}(-1)^{a}\psi_{1}^{a}\psi_{2}^{b}\right)+\frac{1}{2}\sum_{g_1+g_2=g,g_i>0}\sum_{a+b=2g}(-1)^{a}j_*\left(\psi_{*1}^a\psi_{*2}^b\cap [\Delta(g_1,g_2)]\right)
\end{align*}
Thus  we get the following relation on $\overline{\mathcal{M}}_{g+1}$,
\begin{align*}
 \frac{(2g+2)!}{B_{2g+2}}\ch_{2g+1}(\mathbb{E}) =\frac{1}{2}\iota_*\left(\sum_{a+b=2g}(-1)^{a}\psi_{1}^{a}\psi_{2}^{b}\right)\end{align*}
Together with equation~\eqref{eqn:ch-2g-1-hodge} and Proposition~\ref{prop:lambda-g-lambda-g-1}, we obtain Theorem~\ref{thm:LP-push-g-class}.

\subsection{Connection to degree-$g$ topological recursion relations}\label{subsec:relation-hodge-trr}
We propose the following connection between the relations \eqref{eqn:LP-push-g}, and a class of degree-$g$ topological recursion relations on the moduli space of stable curves:
\begin{conjecture}
  The universal equation \eqref{eqn:LP-push-g} is a consequence of the degree-$g$ topological recursion relations proposed in \cite{clader2022topological}.
\end{conjecture}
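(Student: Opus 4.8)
The plan is to argue within the standard dictionary between universal equations valid for all cohomological field theories and tautological relations on $\overline{\mathcal{M}}_{g,n}$: such an equation is a consequence of a family of relations precisely when the tautological cycle it encodes lies in the ideal generated by that family under products and pushforwards along the tautological gluing and forgetful maps. Rather than manipulate \eqref{eqn:LP-push-g} directly, I would pass to its cycle-level lift. Theorem~\ref{thm:LP-push-g-class} exhibits \eqref{eqn:LP-push-g} as the image, under the splitting principle in genus $g+1$, of the explicit tautological relation \eqref{eqn:LP-push-g-class} in $R^{2g+1}(\overline{\mathcal{M}}_{g+1})$; since the latter holds for every CohFT and produces \eqref{eqn:LP-push-g} for every CohFT, it suffices to show that \eqref{eqn:LP-push-g-class} lies in the ideal generated by the degree-$g$ topological recursion relations of \cite{clader2022topological}. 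I would first record the two sides of \eqref{eqn:LP-push-g-class} structurally: the left-hand side is the nonseparating-node pushforward $\iota_*\big(\sum_{a+b=2g}(-1)^a\psi_1^a\psi_2^b\big)$, and the right-hand side is the circular-graph class of Proposition~\ref{prop:lambda-g-lambda-g-1} applied in genus $g+1$, equal up to an explicit constant to $\ch_{2g+1}(\mathbb{E})=\frac{(-1)^g}{(2g+1)!}\lambda_{g+1}\lambda_g$ by \eqref{eqn:ch-2g-1-hodge}.

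The degree-$g$ relations of \cite{clader2022topological} express a high-codimension monomial in $\psi$- and $\kappa$-classes on a genus-$g$ moduli space as an explicit sum over boundary strata. The mechanism I would pursue is to apply these relations to the high powers $\psi_1^a$ and $\psi_2^b$ (with $a+b=2g$) attached to the two branches of the nonseparating node on the left-hand side of \eqref{eqn:LP-push-g-class}: each application trades a block of $\psi$-powers for a one-edge degeneration carrying a lower-genus vertex, and iterating turns the two $\psi$-towers into a chain of vertices joined by edges whose propagators are exactly the factors $\frac{-1}{k(e)}\sum_{l+m=k(e)-1}\frac{\psi_{h(e)}^{l}}{l!}\frac{\psi_{h'(e)}^{m}}{m!}$ of \eqref{eqn:LP-push-g-class}. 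The crucial structural observation is that the alternating trace $\sum_{a+b=2g}(-1)^a\psi_1^a\psi_2^b$, together with the identification of the two node branches, is precisely the closing-up that converts such an open chain into the circular graphs $G^\circ_{g+1,0}$; the Hodge-splitting properties \eqref{eqn:prop-Hodge-1} and \eqref{eqn:prop-Hodge-2} then account for the distribution of genus among the vertices. I would organize the whole reduction as an induction on $g$, peeling off one vertex of the chain with a single degree-$g$ relation and invoking the lower instances of the statement for the remainder.

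To produce the $\mathsf{P}_{g(v)}^{g(v)}(0,0)$ insertions that decorate the vertices on the right-hand side of \eqref{eqn:LP-push-g-class}, I would follow the route of Proposition~\ref{prop:intro-3-spin}: the relations of \cite{clader2022topological} are expected to descend from Pixton's $3$-spin relations \cite{pixton2012conjectural}, and in that setting the top-degree Pixton class appears, exactly as in Lemma~\ref{lem:con-gamma-lambda-g}, as the constant term in $r$ of a weighting sum. Reading off this $r^0$-coefficient via Faulhaber's formula is what would generate the Bernoulli factor $B_{2g+2}$ and hence the overall normalization $\frac{2^{2g-1}}{(-1)^{g+1}(g+1)}$ of \eqref{eqn:LP-push-g-class}.

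The step I expect to be hardest is the uniform combinatorial matching of coefficients. Both sides of \eqref{eqn:LP-push-g-class} are completely explicit, so the heart of the proof is an identity reconciling the constants produced by iterating the degree-$g$ relations with the data in \eqref{eqn:LP-push-g-class}: the prefactor, the automorphism weights $\frac{1}{|\Aut(\Gamma)|}$, the cyclic symmetry, the edge products, and the Bernoulli number. I anticipate that the sums over the compositions recording edge- and vertex-genera, together with the cyclic weight, are exactly what the trace-like closure of a chain of relations yields, but verifying this in closed form for all $g$ — rather than only in the low-genus cases visible in the examples following Proposition~\ref{prop:lambda-g-lambda-g-1} — is the delicate point. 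A secondary difficulty, which must be settled first, is to confirm that the degree-$g$ relations alone generate \eqref{eqn:LP-push-g-class}, i.e.\ that no auxiliary input beyond \cite{clader2022topological} (such as the Liu--Xu vanishing used in the derivation of \eqref{eqn:LP-push-g}) is secretly required; were that the case, the correct statement would instead be that \eqref{eqn:LP-push-g} follows from the degree-$g$ relations together with those standard equations.
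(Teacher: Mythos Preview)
The statement you are trying to prove is a \emph{Conjecture} in the paper, not a theorem. The paper does not prove it; immediately after stating it, the authors write ``We verify the conjecture for $g=2$ case in the example below,'' and the example is a direct check using only the genus-$1$ topological recursion relation \eqref{eqn:genus-1-TRR}. So there is no proof in the paper to compare your proposal against.

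Your proposal is therefore not a proof but a research outline, and you acknowledge as much: the ``uniform combinatorial matching of coefficients'' is explicitly left open, the claim that the relations of \cite{clader2022topological} ``are expected to descend from Pixton's $3$-spin relations'' is itself conjectural, and you flag the possibility that auxiliary input beyond \cite{clader2022topological} may be needed. These are not minor technicalities---they are precisely the content of the conjecture. In particular, passing to the cycle-level relation \eqref{eqn:LP-push-g-class} and trying to generate it from the degree-$g$ TRRs does not obviously simplify matters: the paper's own proof of \eqref{eqn:LP-push-g-class} goes through $\lambda_g\lambda_{g-1}$ and the double ramification cycle formula, neither of which is known to lie in the ideal generated by \cite{clader2022topological}. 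Your inductive ``peeling off one vertex'' picture is suggestive, but you have not identified which specific relation from \cite{clader2022topological} performs the peeling step, nor why the $\mathsf{P}_{g(v)}^{g(v)}(0,0)$ insertions emerge from that process rather than from a separate Pixton-type input. As it stands, the proposal is a plausible strategy, not a proof, and the paper does not claim otherwise.
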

We verify the conjecture for $g=2$ case in the example below.
\begin{example}
  For genus $g=2$, the equation \eqref{eqn:LP-push-g} specializes to
\begin{align}\label{eqn:LP-push-g=2}
 &\sum_{i=0}^{2}(-1)^{i}\langle\langle T^{i}(\phi_\alpha)T^{2-i}(\phi^\alpha)\rangle\rangle_{1}  \nonumber
\\=&\frac{2}{2^2}
\sum_{m=1}^1 \frac{1}{m}\sum_{\substack{k_1+\dots+k_m=2\\ k_1,\dots,k_m\geq1}} \sum_{\substack{g_1+\dots+g_m=1\\ g_1,\dots,g_m\geq1}}
\prod_{i=1}^{m}\frac{-1}{k_i!}\sum_{l_i=0}^{k_i-1}\binom{k_i-1}{l_i}
\langle\langle T^{l_i}(\phi_{\alpha_{i-1}})T^{k_i-1-l_i}(\phi^{\alpha_i});\mathsf{P}_{g_i}^{g_i}(0,0)\rangle\rangle_{g_i}
\end{align}
We will check that \eqref{eqn:LP-push-g=2} is a consequence of the genus-1 topological recursion relation:
\begin{align}\label{eqn:genus-1-TRR}
  \langle\langle T(W)\rangle\rangle_1 =\frac{1}{24}
  \sum_{\alpha}\langle\langle W\phi_\alpha\phi^\alpha\rangle\rangle_0
\end{align}
for any vector field $W$ on the big phase space. 
We start by rewriting the left hand side of \eqref{eqn:LP-push-g=2} as
\begin{align*}
&\sum_{\alpha}\sum_{i=0}^{2}(-1)^{i}\langle\langle T^{i}(\phi_\alpha)T^{2-i}(\phi^\alpha)\rangle\rangle_{1}
=2\sum_{\alpha}\langle\langle \phi_\alpha T^2(\phi^\alpha)\rangle\rangle_1-\sum_{\alpha}\langle\langle T(\phi_\alpha) T(\phi^\alpha)\rangle\rangle_1
\\=&\frac{1}{24}\sum_{\alpha,\beta}\langle\langle \phi_\alpha T(\phi^\alpha)\phi_\beta\phi^\beta\rangle\rangle_0,
\end{align*}
where we have used \eqref{eqn:genus-1-TRR} and its derivatives in the last equality.
The right hand side of \eqref{eqn:LP-push-g=2} equals
\begin{align*}
&-\frac{1}{4}\sum_{\alpha}\sum_{l_1=0}^{1}\binom{1}{l_1}
\langle\langle T^{l_1}(\phi_{\alpha})T^{1-l_1}(\phi^{\alpha});\mathsf{P}_{1}^{1}(0,0)\rangle\rangle_{1}
\\=&-\frac{1}{4}\sum_{\alpha}
\langle\langle T(\phi_{\alpha})\phi^\alpha;\mathsf{P}_{1}^{1}(0,0)\rangle\rangle_{1}
-\frac{1}{4}\sum_{\alpha}
\langle\langle \phi_{\alpha}T(\phi^\alpha);\mathsf{P}_{1}^{1}(0,0)\rangle\rangle_{1}
\\=&
-\frac{1}{2}\sum_{\alpha}
\langle\langle \phi_{\alpha}T(\phi^\alpha);\mathsf{P}_{1}^{1}(0,0)\rangle\rangle_{1}
=\frac{1}{24}\sum_{\alpha,\beta}\langle\langle\phi_\alpha T(\phi^\alpha)\phi_\beta\phi^\beta \rangle\rangle_0
\end{align*}
where we have used the identity
\begin{align*}
\sum_{\alpha}
\langle\langle T(\phi_{\alpha})\phi^\alpha;\mathsf{P}_{1}^{1}(0,0)\rangle\rangle_{1}=\sum_{\alpha,\beta}\eta^{\alpha\beta}
\langle\langle T(\phi_{\alpha})\phi_\beta;\mathsf{P}_{1}^{1}(0,0)\rangle\rangle_{1}
=\sum_{\beta}
\langle\langle \phi_{\beta}T(\phi^\beta);\mathsf{P}_{1}^{1}(0,0)\rangle\rangle_{1}. 
\end{align*}
in the second equality, and the formula~\eqref{eqn:P_1^1(00)} for $\mathsf{P}_1^1(0,0)$ in the last equality.
\end{example}

\section{Chern class of Hodge bundle}\label{sec:Chern class of Hodge bundle}

Unlike the Chern characters, the Chern classes $\lambda_i=c_i(\mathbb{E})$ already vanish in degrees $i>g$.
Thus for degree reasons, we believe the vanishing of the $\lambda_i$ implies much stronger relations than the relations from the Chern characters we have studied in Section~\ref{sec:chern-character-Hodge}, and this makes it very interesting to study what kind of relations the vanishing of the $\lambda_i$ for $i > g$ leads to.

\subsection{Linear Hodge integrals}

Define the linear Hodge integral
\begin{align}
\langle\tau_{k_1}(\phi_{\alpha_1}),\dots,\tau_{k_n}(\phi_{\alpha_n});\lambda_j\rangle_{g,n,\beta}:=\int_{[\overline{\mathcal{M}}_{g,n}(X,\beta)]^\vir}\prod_{i=1}^{n}\psi_i^{k_i} \ev_i^*\phi_{\alpha_i}\cdot \lambda_j    
\end{align} 
where $\lambda_j=c_j(\mathbb{E})$  is the $j$-th Chern class of the Hodge bundle $\mathbb{E}$.

The generating function of genus-$g$ linear
Hodge integrals is given by
\begin{align}\label{eqn:def-linear-Hodge}
F_g^{\mathbb{E}}(\mathbf{t},u):=\Bigg\langle\exp\Big(\sum_{n,\alpha}t_n^\alpha\tau_{n}(\phi_\alpha)\Big);\lambda(u)\Bigg\rangle_{g}
\end{align}
where $\lambda(u)=\sum_{i=0}^{g}\lambda_i u^i$ is the Chern polynomial of the Hodge bundle $\mathbb{E}$, and $u$ is a formal parameter.
Via a standard identity relating the Chern classes and Chern characters of a vector bundle, we have
\begin{align}
\lambda(u)=\exp\left(\sum_{l\geq1}(2l-2)!u^{2l-1}\ch_{2l-1}(\mathbb{E})\right) .   
\end{align}
The generating function of genus-$g$ linear Hodge integrals $F^{\mathbb{E}}_{g}(\mathbf{t},u)$ can be obtained from $F_g^{\mathbb{E}}(\mathbf{t},\mathbf{s})$ via the substitution
\begin{align}\label{eqn:tran-s-u}
s_{2i-1}=(2i-2)!\cdot u^{2i-1}.
\end{align}

\subsection{The Faber--Pandharipande relation and Givental quantization}
\subsubsection{The Faber--Pandharipande relation}
Define the differential operator for $l\geq 1$
\begin{equation}
  \label{eq:D-operator}
  D_{2l-1}:=-\sum_{n=0}^{\infty}\sum_{\alpha}\tilde{t}_n^\alpha\frac{\partial}{\partial t^\alpha_{n+2l-1}}+\frac{\hbar}{2}\sum_{n=0}^{2l-2}(-1)^{n}\eta^{\alpha\beta}\frac{\partial}{\partial t_n^\alpha}\frac{\partial}{\partial t_{2l-2-n}^\beta}
\end{equation}
Let $\mathcal{D}^{\mathbb{E}}(\mathbf{t},\mathbf{s})=\exp\left(\sum_{g\geq0}\hbar^{g-1}F^{\mathbb{E}}_g(\mathbf{t},\mathbf{s})\right)$ be the total descendant potential of Hodge integrals.
The following differential equation was proved in \cite[Propositon 2]{faber2000hodge}
\begin{align}\label{eqn:FP-prop2}
\frac{\partial}{\partial s_{2l-1}}\mathcal{D}^{\mathbb{E}}(\mathbf{t},\mathbf{s})=\frac{B_{2l}}{(2l)!}D_{2l-1}\cdot \mathcal{D}^{\mathbb{E}}(\mathbf{t},\mathbf{s})   
\end{align}
for any $l\geq1$. 
The identity \eqref{eqn:FP-prop2} may be used to derive the following formula (cf. \cite{givental2001gromov}) 
\begin{align}\label{eqn:Zhodge-Z-ord}
 \mathcal{D}^{\mathbb{E}}(\mathbf{t},\mathbf{s})
 =\exp(\sum_{l\geq1}\frac{B_{2l}}{(2l)!}s_{2l-1}D_{2l-1})\cdot   \mathcal{D}^{X}(\mathbf{t})
\end{align}
for $\mathcal{D}^{\mathbb{E}}(\mathbf{t},\mathbf{s})$ in terms of $\mathcal{D}^{X}(\mathbf{t})=\exp\left(\sum_{g\geq0}\hbar^{g-1}F_g(\mathbf{t})\right)$, the total descendant potential of Gromov--Witten invariants of $X$.
\subsubsection{Quantization of quadratic Hamiltonians}

In this section, we give a brief review of Givental's quantization of quadratic Hamiltonians (see \cite{givental2001gromov} for more details).

Let $z$ be a formal variable. We consider the space $\mathbb{H}$ which is the space of Laurent polynomials in one variable $z$ with coefficients in $H^*(X;\mathbb{C})$.
We define a symplectic form $\Omega$ on $\mathbb{H}$ via
\[\Omega(f,g)= \operatorname{Res}_{z=0}\eta(f(-z),g(z))dz\]
for any $f,g\in\mathbb{H}$. Note that we have $\Omega(f,g)=-\Omega(g,f)$. There is a natural polarization $\mathbb{H}=\mathbb{H}_+\oplus \mathbb{H}_-$ corresponding to the decomposition $f(z,z^{-1})=f_+(z)+f_-(z^{-1})z^{-1}$ of Laurent polynomials into polynomial and polar parts. It is easy to see that $\mathbb{H}_+$ and $\mathbb{H}_-$ are both Lagrangian subspaces of $\mathbb{H}$ with respect to $\Omega$.

Introduce a Darboux coordinate system $\{p^\beta_m,q^\alpha_n\}$ on $\mathbb{H}$ with respect to the above polarization. A general element $f\in\mathbb{H}$ can be written in the form
\[\sum_{m\geq 0,\beta}p^\beta_m\phi^\beta(-z)^{-m-1}+\sum_{n\geq 0,\alpha}q^\alpha_n\phi_{\alpha} z^n,\]
where $\{{\phi}^\alpha\}$ is the dual basis to $\{\phi_\alpha\}$.
Denote
\begin{eqnarray*}
\mathbf{p}(z):=\sum_{m\geq 0,\beta}p^\beta_m\phi^\beta(-z)^{-m-1},\quad
\mathbf{q}(z):=\sum_{n\geq 0,\alpha}q^\alpha_n\phi_{\alpha} z^n.
\end{eqnarray*}
For convenience, we identify $\mathbb{H}_{+}$ with the big phase space via identification between basis $\{z^{n}\phi_\alpha\}$ and $\{\tau_n(\phi_\alpha)\}$, and dilaton shift about coordinates $q_n^\alpha=t_n^\alpha-\delta_{n}^{1}\delta_{\alpha}^{1}$.

Let $A\colon\mathbb{H}\to\mathbb{H}$ be a linear infinitesimal symplectic transformation, i.e.\ $\Omega(Af,g)+\Omega(f,Ag)=0$ for any $f,g\in\mathbb{H}$.
In Darboux coordinates, the quadratic Hamiltonian
\[f\to\frac{1}{2}\Omega(Af,f)\]
is a series of homogeneous degree two monomials in $\{p^\beta_m,q^\alpha_n\}$.

We define the quantization of quadratic monomials as
\[
  \widehat{q^\beta_mq_n^\alpha}=\frac{q^\beta_m q_n^\alpha}{\hbar},\quad \widehat{q^\beta_m p_n^\alpha}=q^\beta_m\frac{\partial}{\partial q^\alpha_n},\quad
  \widehat{p^\beta_m p_n^\alpha}=\hbar \frac{\partial}{\partial q^\beta_m}\frac{\partial}{\partial q^\alpha_n},
\]
where $\hbar$ is a formal variable.
We define the quantization $\widehat{A}$ of a general $A$ by extending the above equalities linearly.
The differential operators $\widehat{q^\beta_mq_n^\alpha},\widehat{q^\beta_mp_n^\alpha},\widehat{p^\beta_mp_n^\alpha}$ act on the \emph{Fock space}, which is the space of formal functions in $\mathbf{q}(z)\in\mathbb{H}_+$.
For example, the descendant potential $\mathcal{D}(\mathbf{t})$ may be regarded as an element in the Fock space via the dilaton shift $\mathbf{q}(z)=\mathbf{t}(z)-\mathbf{1} z$.
The quantization of a symplectic transform of the form $\exp(A)$, with $A$ infinitesimal symplectic, is defined as $\exp(\widehat{A})=\sum_{n\geq 0}\frac{\widehat{A}^n}{n!}$.

\subsection{Universal relations from the Chern classes of the Hodge bundle}

It is easy to see that multiplication by $z^{2k-1}$ defines an infinitesimal symplectic transformation on $(\mathbb{H},\Omega)$, and we denote by $\widehat{z^{2k-1}}$ the corresponding quantization.
It may be checked that $\widehat{z^{2k-1}}$ coincides with the operator $D_{2k-1}$ from \eqref{eq:D-operator}.
For convenience, we will use the following convention
\begin{align*}
\widetilde{\mathcal{P}}(\mathbf{q})=\mathcal{P}(\mathbf{t})|_{\mathbf{t}(z)=\mathbf{q}(z)+\mathbf{1}z}    
\end{align*}
for any partition function $\mathcal{P}(\mathbf{t})$.
This allows us to restate \eqref{eqn:Zhodge-Z-ord} as
\begin{align}\label{eqn:quan-ZE-Z}
\widetilde{\mathcal{D}}^{\mathbb{E}}(\mathbf{q},\mathbf{s})=\widehat{R(z)}\cdot \widetilde{\mathcal{D}}^{X}(\mathbf{q})
\end{align}
for the $R$-matrix
\begin{equation*}
  R(z)=\exp(\sum_{i=1}^{\infty}\frac{B_{2i}}{(2i)!}s_{2i-1}z^{2i-1}).
\end{equation*}
So under the substitution \eqref{eqn:tran-s-u}, taking the logarithm of equation~\eqref{eqn:quan-ZE-Z}, 
we obtain
\begin{equation*}
  \sum_{g\geq0}\hbar^{g-1}\widetilde{F}_{g}^{\mathbb{E}}(\mathbf{q},u)= \log\left(\exp\Big(\sum_{l\geq1}\frac{B_{2l}}{(2l)(2l-1)}u^{2l-1}\widehat{z^{2l-1}}\Big)\widetilde{\mathcal{D}}^X(\mathbf{q})\right). 
\end{equation*}
Denote 
\begin{align}
&\langle\langle\tau_{i_1}(\phi_1),\dots,\tau_{i_n}(\phi_n);c_{g,n}\rangle\rangle^{\sim}_g(\mathbf{q}(z))=\langle\langle\tau_{i_1}(\phi_1),\dots,\tau_{i_n}(\phi_n);c_{g,n}\rangle\rangle_g(\mathbf{t}(z))|_{\mathbf{t}(z)=\mathbf{q}(z)+1z}
\end{align}
for certain cohomology class $c_{g,n}\in H^*(\overline{\mathcal{M}}_{g,n})$. 
\begin{proposition}
  Let $g > 0$ and $i \ge 0$.
  Then, the following holds for descendant Gromov--Witten invariants
  \begin{equation*}
    [u^i \hbar^{g-1}] \log\left(\exp\Big(\sum_{l\geq1}\frac{B_{2l}}{(2l)(2l-1)}u^{2l-1}\widehat{z^{2l-1}}\Big)\widetilde{\mathcal{D}}^X(\mathbf{q})\right)
    =
    \begin{cases}
      0 & \text{, if $i > g$}, \\
      \frac{(-1)^g}{2^g}\cdot\langle\langle \,\,;\mathsf{P}_{g}^g(\emptyset)\rangle\rangle_g^{\sim}(\mathbf{q}(z)) & \text{, if $i=g$}.
    \end{cases}
  \end{equation*}
Here $[u^i\hbar^{g-1}]$ denotes taking the coefficient of $u^i\hbar^{g-1}$. 
\end{proposition}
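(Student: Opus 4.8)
The plan is to expand the logarithm using Givental's $R$-matrix action in terms of a sum over stable graphs, following the Givental-Teleman formalism. The quantized operator $\exp\bigl(\sum_{l\geq1}\frac{B_{2l}}{(2l)(2l-1)}u^{2l-1}\widehat{z^{2l-1}}\bigr)$ is precisely $\widehat{R(u,z)}$ for the $R$-matrix $R(z)=\exp(\sum_{i=1}^\infty \frac{B_{2i}}{(2i)(2i-1)}u^{2i-1}z^{2i-1})$ appearing in Theorem~\ref{thm:intro-Hodge-class-relation}. The key move is to identify the logarithm of the $R$-matrix action on the descendant potential with the connected sum over decorated graphs $\Gamma \in G_g^{\mathrm{Feyn}}$, where each vertex carries a correlator $\langle\langle\,\,\rangle\rangle^{\sim}_{g(v)}(R^{-1}\mathbf{q})$ and each edge carries the bivector propagator built from $R^{-1}$. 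This is exactly the content of $\Cont_\Gamma$ in Theorem~\ref{thm:intro-Hodge-class-relation}, so the statement of the Proposition reduces, after extracting the $\hbar^{g-1}$ coefficient, to showing
\[
  [u^i]\sum_{\Gamma\in G^{\mathrm{Feyn}}_{g}}\frac{1}{|\Aut(\Gamma)|}\Cont_{\Gamma}
  = \widetilde F_g^{\mathbb E}\text{-coefficient},
\]
which is governed by the Chern class relations.

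First I would record that $\widetilde F_g^{\mathbb E}(\mathbf q, u)$ is the generating function of linear Hodge integrals, so by definition its $u^i$ coefficient is the correlator twisted by $\lambda_i$. The vanishing $\lambda_i = 0$ for $i > g$ on $\overline{\mathcal M}_{g,n}$ then immediately forces the $i>g$ coefficient to vanish, giving the first case. For the $i = g$ case, I would use the $\lambda_g$ formula \eqref{eqn:lambda-g-formula}, namely $\lambda_g = \frac{(-1)^g}{2^g}\mathsf P_g^g(0,\dots,0)$, which identifies the $u^g$ coefficient with $\frac{(-1)^g}{2^g}\langle\langle\,\,;\mathsf P_g^g(\emptyset)\rangle\rangle_g^{\sim}(\mathbf q(z))$. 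Thus the right-hand side of the Proposition is nothing but the geometric meaning of the $u^i$-coefficient of $\widetilde F_g^{\mathbb E}$, and the real content is to verify that the graph-sum expansion of the logarithm genuinely computes $\sum_g \hbar^{g-1}\widetilde F_g^{\mathbb E}(\mathbf q,u)$.

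The heart of the argument is therefore the graph-sum expansion itself: I would invoke the standard Feynman-diagram formula for the action of a quantized upper-triangular symplectic operator $\widehat{R}$ on a total descendant potential (as in Givental's work and the Givental-Teleman classification), which expresses $\log(\widehat{R}\,\widetilde{\mathcal D}^X)$ as a sum over connected graphs with $R^{-1}$-dressed vertices and edge propagators of the stated form. The genus grading by $\hbar^{g-1}$ corresponds to $g(\Gamma) = \sum_v g(v) + h^1(\Gamma)$, and extracting $[\hbar^{g-1}]$ selects precisely the graphs of total genus $g$, i.e.\ the set $G_g^{\mathrm{Feyn}}$. I would match the propagator: the bivector assigned to each edge in Theorem~\ref{thm:intro-Hodge-class-relation} is $\frac{\eta^{\alpha\beta}\phi_\alpha\otimes\phi_\beta - \eta^{\alpha\beta}R^{-1}(\psi_{h})(\phi_\alpha)\otimes R^{-1}(\psi_{h'})(\phi_\beta)}{\psi_h+\psi_{h'}}$, which is the standard Givental edge factor for the symplectic operator $R$, and confirm that the vertex insertions $\langle\langle\,\,\rangle\rangle^{\sim}_{g(v)}(R^{-1}\mathbf q)$ arise from the dilaton-shifted substitution $\mathbf t \mapsto \mathbf q + \mathbf 1 z$ combined with the $R^{-1}$-change of the input.

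The main obstacle I anticipate is bookkeeping the combinatorics precisely enough to certify that the abstract Givental graph sum agrees term-by-term with the concrete $\Cont_\Gamma$ defined in Theorem~\ref{thm:intro-Hodge-class-relation}, including the $\frac{1}{|\Aut(\Gamma)|}$ symmetry factors and the correct placement of $R^{-1}(\psi)$ arguments on half-edges versus legs. A subtle point is the compatibility of the dilaton shift with the $R^{-1}$-action: one must check that $R^{-1}\mathbf q$ in the vertex brackets, together with the shifted potential convention $\widetilde{(\cdot)}$, produces exactly the linear Hodge generating function rather than an off-by-a-shift variant. Once the graph sum is matched to $\sum_g\hbar^{g-1}\widetilde F_g^{\mathbb E}(\mathbf q,u)$ and the $\hbar^{g-1}$ coefficient is extracted, the two displayed cases follow immediately from the Chern class vanishing and from \eqref{eqn:lambda-g-formula}, completing the proof.
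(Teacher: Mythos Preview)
Your proposal inverts the logical order of the paper and, in doing so, introduces an unnecessary detour while leaving the essential step unjustified. In the paper, the Proposition is a one-line consequence of the identity
\[
  \sum_{g\geq0}\hbar^{g-1}\widetilde{F}_{g}^{\mathbb{E}}(\mathbf{q},u)
  = \log\Big(\exp\Big(\sum_{l\geq1}\tfrac{B_{2l}}{(2l)(2l-1)}u^{2l-1}\widehat{z^{2l-1}}\Big)\widetilde{\mathcal{D}}^X(\mathbf{q})\Big),
\]
which is displayed immediately before the Proposition and is itself obtained from Faber--Pandharipande's differential equation \eqref{eqn:FP-prop2} via \eqref{eqn:Zhodge-Z-ord} and \eqref{eqn:quan-ZE-Z}. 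Given this identity, extracting $[u^i\hbar^{g-1}]$ yields the $\lambda_i$-twisted genus-$g$ correlator, and the two cases follow from $\lambda_i=0$ for $i>g$ and from \eqref{eqn:lambda-g-formula}. No Feynman graph expansion is needed here; in the paper that expansion (Theorem~\ref{thm:givental-quant}) is introduced \emph{after} the Proposition, solely to reformulate it as Theorem~\ref{thm:Hodge-class-relation}.

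Your route goes through the graph sum and then asserts that ``the real content is to verify that the graph-sum expansion of the logarithm genuinely computes $\sum_g \hbar^{g-1}\widetilde F_g^{\mathbb E}(\mathbf q,u)$'', proposing to do this by invoking Givental's Feynman-diagram formula. But Givental's formula only gives you $\log(\widehat{R}\,\widetilde{\mathcal D}^X)=\text{graph sum}$; it says nothing about why either side equals the Hodge generating function $\sum_g\hbar^{g-1}\widetilde F_g^{\mathbb E}$. That identification is precisely the Faber--Pandharipande input $\widetilde{\mathcal D}^{\mathbb E}=\widehat{R}\,\widetilde{\mathcal D}^X$, which you never invoke. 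Without it, your argument does not close: you have $A=B$ (log $=$ graph sum) and you want $B=C$ (graph sum $=$ Hodge), but the tool you cite only re-proves $A=B$. Once you insert the missing step \eqref{eqn:quan-ZE-Z}, the graph expansion becomes entirely superfluous for this Proposition, and your final paragraph (vanishing of $\lambda_i$ for $i>g$ and \eqref{eqn:lambda-g-formula} for $i=g$) is then exactly the paper's proof.
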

\begin{proof}
  This follows from the vanishing $\lambda_i = 0$ if $i > g$, and equation~\eqref{eqn:lambda-g-formula}.
\end{proof}

\begin{theorem}[\cite{givental2004symplectic}]\label{thm:givental-quant}
  Let $\mathbf{q}(z)=\sum_{m,\alpha}q_m^\alpha\phi_\alpha z^m$ be an element of $\mathbb{H}_+$.
  Given a partition function $\mathcal{D}(\mathbf{q})$ on $\mathbb{H}_+$, the quantized operators act as follows.
  The quantization of an $R$-matrix $R$ acts on $\mathcal D$ via
\[
\widehat{R} \mathcal{D}(\mathbf{q})=\left[e^{\frac{\hbar}{2} V\left(\frac{\partial}{\partial_{\mathbf{q}}},\frac{\partial}{\partial_{\mathbf{q}}}\right)}\mathcal{D}\right](R^{-1}\mathbf{q})
\]
where $R^{-1}\mathbf{q}$ is the power series $R^{-1}(z)\mathbf{q}(z)$, and the quadratic form $V=\sum_{k,l}(p_k,V_{kl}p_l)$ is defined via
\[
V(w,z)=\sum_{k,l\geq 0}V_{kl}w^kz^l=\frac{1-R(-w)^*R(-z)}{w+z},
\]
where $*$ denotes the adjoint with respect to the intersection paring on $H^*(X;\mathbb{C})$.
\end{theorem}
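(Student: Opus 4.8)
The plan is to prove the formula by reducing to a one-parameter subgroup and identifying both sides as solutions of the same ordinary differential equation in the group parameter. Write $R(z) = \exp(A(z))$ with $A(z) = \sum_{k\geq 1} a_k z^k$ the infinitesimal symplectic generator; the condition $\Omega(Af,g) + \Omega(f,Ag) = 0$ is equivalent to $R(-z)^* R(z) = \mathrm{Id}$, and infinitesimally to $A(z)^* = -A(-z)$. Set $R_s = \exp(sA)$, so $R_1 = R$. By the definition of the quantization of $\exp(\,\cdot\,)$ recalled above, $\widehat{R_s} = \exp(s\widehat{A})$, hence $\frac{d}{ds}\widehat{R_s}\,\mathcal{D} = \widehat{A}\,\widehat{R_s}\,\mathcal{D}$ and $\widehat{R_0} = \mathrm{Id}$. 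It therefore suffices to show that the right-hand side of the theorem, regarded as a family of operators $G_s$, satisfies this same equation with $G_0 = \mathrm{Id}$.

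Because $A$ strictly raises the $z$-degree, it maps $\mathbb{H}_+$ into $\mathbb{H}_+$ and sends only a part of $\mathbb{H}_-$ into $\mathbb{H}_+$; consequently the quadratic Hamiltonian $\tfrac12\Omega(Af,f)$ contains a $qp$-part and a $pp$-part but no $qq$-part. Its quantization thus splits as $\widehat{A} = L + \tfrac{\hbar}{2}Q(\partial_{\mathbf q},\partial_{\mathbf q})$, where $L$ is the first-order operator generating the linear flow $\mathbf{q}\mapsto R_s^{-1}\mathbf{q}$ and $Q$ is the constant-coefficient quadratic form in the $\partial_{\mathbf q}$ arising from the $\mathbb{H}_-\to\mathbb{H}_+$ component of $A$. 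I would write $L$ and $Q$ out explicitly in the Darboux coordinates $\{p^\beta_m,q^\alpha_n\}$ using the quantization rules $\widehat{q^\beta_m p^\alpha_n} = q^\beta_m\,\partial/\partial q^\alpha_n$ and $\widehat{p^\beta_m p^\alpha_n} = \hbar\,\partial^2/(\partial q^\beta_m\,\partial q^\alpha_n)$; the absence of a $qq$-part is exactly why no $\tfrac{1}{\hbar}$ term appears in the final formula.

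Next I introduce $G_s := T_s\circ W_s$, where $(T_s f)(\mathbf{q}) = f(R_s^{-1}\mathbf{q})$ is the substitution operator and $W_s = \exp\bigl(\tfrac{\hbar}{2}V_s(\partial_{\mathbf q},\partial_{\mathbf q})\bigr)$ with $V_s(w,z) = (1 - R_s(-w)^* R_s(-z))/(w+z)$, so that $G_1\mathcal{D}$ is exactly the claimed right-hand side. At $s=0$ we have $R_0 = \mathrm{Id}$ and $V_0 = 0$, whence $G_0 = \mathrm{Id}$. Differentiating gives $\frac{d}{ds}G_s = \dot T_s W_s + T_s\dot W_s$; since the second-order operators $V_s(\partial,\partial)$ commute among themselves, $\dot W_s = \tfrac{\hbar}{2}\dot V_s(\partial_{\mathbf q},\partial_{\mathbf q})W_s$, so $\dot T_s$ supplies a first-order term and $T_s\dot W_s$ a second-order term. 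A crucial preliminary is that the apparent pole of $V_s$ at $w+z=0$ cancels: the symplectic identity $R_s(z)^* R_s(-z) = \mathrm{Id}$ makes the numerator $1 - R_s(-w)^*R_s(-z)$ vanish at $w=-z$, so $V_s$ is a genuine power series, and differentiating this same identity in $s$ is what produces the infinitesimal generator $Q$.

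The main obstacle is the noncommutative bookkeeping in the identity $\dot T_s W_s + T_s\,\tfrac{\hbar}{2}\dot V_s(\partial,\partial)W_s = (L + \tfrac{\hbar}{2}Q)\,T_s W_s$. The substitution $T_s$ does not commute with $W_s$, and conjugating the second-order operator $\tfrac{\hbar}{2}Q$ past $T_s$ generates precisely the cross terms that must be matched against $\dot V_s$, while $L$ must be reconciled with $\dot T_s$ after transporting the linear flow through $W_s$. I would settle this by comparing both sides order by order in $\hbar$ and in the number of $\partial_{\mathbf q}$, reducing everything to the matrix differential equation for $R_s(-w)^* R_s(-z)$ obtained from $\dot R_s = A R_s$, together with the compatibility of the denominator $w+z$ with the Leibniz rule. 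Equivalently, and more transparently, one can expand $W_s$ by Wick's theorem as a sum over pairings of the $\partial_{\mathbf q}$ with propagator $V_s$; the theorem then becomes the statement that Givental's quantized flow equals this Gaussian contraction composed with the substitution $R_s^{-1}\mathbf{q}$, which is exactly the edge-and-vertex structure reappearing in the Feynman graphs of Theorem~\ref{thm:intro-Hodge-class-relation}.
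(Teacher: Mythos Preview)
The paper does not supply a proof of this theorem: it is stated with attribution to \cite{givental2004symplectic} and used as a black box, so there is no ``paper's own proof'' to compare against. Your proposal therefore goes beyond what the paper does.

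That said, your strategy is the standard one for this result and is sound in outline: reduce to the one-parameter subgroup $R_s=\exp(sA)$, observe that an upper-triangular infinitesimal symplectic $A$ has no $qq$-part so $\widehat A=L+\tfrac{\hbar}{2}Q(\partial,\partial)$, and verify that $G_s=T_s W_s$ satisfies $\dot G_s=\widehat A\,G_s$ with $G_0=\mathrm{Id}$. The regularity of $V_s$ at $w+z=0$ via $R_s(z)^*R_s(-z)=\mathrm{Id}$ is correctly identified.

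The one place where your write-up is genuinely incomplete is the ``noncommutative bookkeeping'' step. You need, concretely, two identities: first, $\dot T_s = L\,T_s$, which follows from differentiating $f(R_s^{-1}\mathbf q)$ and recognizing the vector field $-A\mathbf q\cdot\partial_{\mathbf q}$; second, $T_s\,\tfrac{\hbar}{2}\dot V_s(\partial,\partial) = \tfrac{\hbar}{2}Q(\partial,\partial)\,T_s$. The latter is the heart of the matter. Conjugation by $T_s$ sends $\partial_{q_n^\alpha}$ to the $\partial$-operator associated with $R_s(z)\phi_\alpha z^n$, so $T_s\,V(\partial,\partial)\,T_s^{-1}$ corresponds to the bilinear form $R_s(w)V(w,z)R_s(z)^*$; differentiating $V_s$ and using $\dot R_s=AR_s$ together with $A(-z)^*=-A(z)$ yields
\[
R_s(w)\,\dot V_s(w,z)\,R_s(z)^*=\frac{A(-w)^*+A(-z)}{w+z}=\frac{-A(w)+A(-z)}{w+z},
\]
which is exactly the kernel of $Q$ (the $\mathbb H_-\to\mathbb H_+$ block of $A$, read off via $\Omega$). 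Writing this computation out explicitly, rather than deferring it to ``order by order in $\hbar$'', would complete the argument; the Wick/Feynman reformulation you mention at the end is a consequence of the formula, not an independent proof of it.
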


The action of the exponential of the quadratic differential operator in Theorem~\ref{thm:givental-quant} has a Feynman
graph expansion, and its logarithm  only involves connected graphs (c.f. \cite{clader2018geometric}).
Let $\Gamma$ denote a
connected dual graph consisting of vertices $V$, edges $E$, and legs $L$, with each vertex $v$ labeled by
a genus $g(v)$.
We define the genus of the graph by $g(\Gamma)=h^1(\Gamma)+\sum_{v\in V}g(v)$
where $h^1$ denotes the first Betti number of the Feynman graph $\Gamma$.
Denote $G_{g}^{\mathrm{Feyn}}$ to be the set of connected Feynman graphs of genus $g$.
Feynman graphs differ from the stable graphs from Section~\ref{pixconj} in that they allow genus zero vertices of valence $1$ or $2$.
\begin{theorem} \label{thm:Hodge-class-relation}
  Let
  \begin{equation*}
    R(z)=\exp(\sum_{i=1}^{\infty}\frac{B_{2i}}{(2i)(2i-1)}u^{2i-1}z^{2i-1}).
  \end{equation*}
  Then, for $g\geq2$, the following holds for descendant Gromov--Witten invariants
  \begin{align}
  \label{eqn:Hodge-class-relation}[u^i]\sum_{\Gamma\in G^{\mathrm{Feyn}}_{g}}\frac{1}{|\Aut(\Gamma)|}\Cont_{\Gamma}
  =\begin{cases}
0, &i>g
\\
\frac{(-1)^g}{2^g}\cdot\langle\langle \,\,;\mathsf{P}_{g}^g(\emptyset)\rangle\rangle_g^{\sim}(\mathbf{q}(z))
&i=g
\end{cases}
 \end{align}
where $[u^i]$ denotes taking the coefficient of $u^i$ and  the contribution $\Cont_{\Gamma}$ of a Feynman graph $\Gamma$ is defined as a contraction of tensors:
\begin{itemize}
\item{Each vertex $v$ is assigned a tensor bracket
$$\langle\langle\quad\rangle\rangle_{g(v)}^{\sim}(R^{-1}\mathbf{q})$$
with insertion coming from the half edges.}
\item Each edge $e$ is assigned a bivector with descendants
    \begin{equation*}
      \frac{\sum_{\alpha,\beta}\left(\eta^{\alpha\beta}\phi_{\alpha}\otimes\phi_{\beta}-\eta^{\alpha\beta} R^{-1}(\psi_{h(e)})(\phi_{\alpha})\otimes R^{-1}(\psi_{h'(e)})(\phi_{\beta})\right)
}{\psi_{h(e)}+\psi_{h'(e)}}
    \end{equation*}
\end{itemize}
\end{theorem}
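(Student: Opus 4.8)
The plan is to combine the Proposition immediately preceding this theorem with Givental's explicit formula for the quantized $R$-matrix action (Theorem~\ref{thm:givental-quant}). The Proposition already identifies the left-hand side of \eqref{eqn:Hodge-class-relation} intrinsically as
\[
[u^i\hbar^{g-1}]\log\left(\exp\Big(\sum_{l\geq1}\tfrac{B_{2l}}{(2l)(2l-1)}u^{2l-1}\widehat{z^{2l-1}}\Big)\widetilde{\mathcal{D}}^X(\mathbf{q})\right),
\]
which vanishes for $i>g$ and equals $\frac{(-1)^g}{2^g}\langle\langle\,\,;\mathsf{P}_g^g(\emptyset)\rangle\rangle_g^{\sim}(\mathbf{q}(z))$ for $i=g$. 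So the entire content of the theorem is to show that the $\hbar^{g-1}$-coefficient of this logarithm is precisely $\sum_{\Gamma\in G_g^{\mathrm{Feyn}}}\frac{1}{|\Aut(\Gamma)|}\Cont_\Gamma$, with $\Cont_\Gamma$ defined by the stated vertex brackets and edge bivectors.

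First I would apply Theorem~\ref{thm:givental-quant} with $R(z)=\exp(\sum_i\frac{B_{2i}}{(2i)(2i-1)}u^{2i-1}z^{2i-1})$, which is exactly the $R$-matrix appearing in \eqref{eqn:quan-ZE-Z} after the substitution \eqref{eqn:tran-s-u}. This rewrites $\widehat{R}\,\widetilde{\mathcal{D}}^X(\mathbf{q})$ as $\left[e^{\frac{\hbar}{2}V(\partial_{\mathbf q},\partial_{\mathbf q})}\widetilde{\mathcal{D}}^X\right](R^{-1}\mathbf{q})$. The evaluation at $R^{-1}\mathbf{q}$ is what produces the vertex contributions $\langle\langle\quad\rangle\rangle_{g(v)}^{\sim}(R^{-1}\mathbf{q})$, while the second-order differential operator $\frac{\hbar}{2}V$ produces the edges. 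Next I would expand the exponential $e^{\frac{\hbar}{2}V(\partial_{\mathbf q},\partial_{\mathbf q})}$ and recognize the result as a sum over ways of applying the bidifferential operator $V$, each application connecting two $\partial_{\mathbf q}$-derivatives of the potential $\log\widetilde{\mathcal{D}}^X=\sum_g\hbar^{g-1}\widetilde{F}_g$. This is the standard Wick/Feynman expansion of a Gaussian-type integral: taking the logarithm retains only connected graphs, and the kernel $V(w,z)=\frac{1-R(-w)^*R(-z)}{w+z}$ translates, under the identification $w,z\leftrightarrow\psi_{h(e)},\psi_{h'(e)}$, into exactly the stated edge bivector $\frac{\sum_{\alpha,\beta}(\eta^{\alpha\beta}\phi_\alpha\otimes\phi_\beta-\eta^{\alpha\beta}R^{-1}(\psi_{h(e)})(\phi_\alpha)\otimes R^{-1}(\psi_{h'(e)})(\phi_\beta))}{\psi_{h(e)}+\psi_{h'(e)}}$, after using that $R$ is the inverse of $R^{-1}$ and the self-adjointness/orthogonality relations encoded by $\eta$.

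The main steps, in order, are: (i) invoke the preceding Proposition to reduce the theorem to a combinatorial identity for the logarithm; (ii) apply Givental's formula to separate the $R^{-1}\mathbf{q}$-shift (vertices) from the quadratic operator (edges); (iii) perform the Feynman-graph expansion of $e^{\frac{\hbar}{2}V(\partial,\partial)}$, checking that the genus bookkeeping $g(\Gamma)=h^1(\Gamma)+\sum_v g(v)$ matches the power of $\hbar$ and that connectedness follows from taking the logarithm; (iv) match the kernel $V_{kl}$ to the prescribed edge bivector by a short generating-function computation with $w+z=\psi_{h(e)}+\psi_{h'(e)}$; and (v) confirm that genus-zero vertices of valence $1$ and $2$ are genuinely allowed, which is why the sum ranges over $G_g^{\mathrm{Feyn}}$ rather than the stable graphs of Section~\ref{pixconj}. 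I expect step (iv)—the precise translation of $V(w,z)$ into the edge bivector with the correct signs and adjoints—to be the main obstacle, since one must carefully track how $R(-w)^*R(-z)$ becomes $R^{-1}(\psi_{h(e)})\otimes R^{-1}(\psi_{h'(e)})$ and verify that the numerator vanishes to first order so that the division by $w+z=\psi_{h(e)}+\psi_{h'(e)}$ yields a genuine polynomial bivector. The vanishing-for-$i>g$ statement then follows immediately from the Proposition together with the fact that both sides are being compared coefficient-by-coefficient in $u$.
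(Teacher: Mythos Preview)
Your proposal is correct and follows exactly the same route as the paper: the paper's argument consists of the preceding Proposition, Givental's formula (Theorem~\ref{thm:givental-quant}), and then the one-sentence remark that the action of the quadratic differential operator has a Feynman graph expansion whose logarithm involves only connected graphs, with a citation to \cite{clader2018geometric}. Your steps (iii)--(v) spell out precisely this standard expansion in more detail than the paper itself does, so there is nothing to add.
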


\subsection{Connection to Pixton's 3-spin relations}

In 2011, Pixton \cite{pixton2012conjectural} proposed a large class of
conjectural relations in the tautological ring of
$\overline{\mathcal{M}}_{g,n}$, which extend the Faber--Zagier
relations on the moduli space of smooth curves $\mathcal{M}_g$.
In \cite{pandharipande2015relations}, these relations were proven to
hold in cohomology by studying the cohomological field theory of
Witten's 3-spin class, and so these relations are now usually called
\emph{Pixton's 3-spin relations}.
In \cite{Ja17}, the relations were established in Chow via a study of
the equivariant Gromov--Witten theory of $\PP^1$.
Pulling back to the moduli space of stable maps, Pixton's 3-spin
relations lead to a large class of differential equations for ancestor
Gromov--Witten invariants of any target manifold.

This work leads to a natural question:
\begin{question}
  Is there any connection between the Hodge class relations
  \eqref{eqn:Hodge-class-relation} and Pixton's 3-spin relations?
\end{question}

To answer this question, we start by making a connection between
Pixton's 3-spin relations, and the Chern classes of the Hodge bundle.
There is a canonical way of writing the $\lambda_i$ in terms of
boundary classes, $\psi$-classes and $\kappa$-classes, i.e.\ as
elements of the strata algebra.
As explained in \cite[\S1.3]{pandharipande2015calculus}, the Chern
polynomial $\lambda(u)$ forms a one-dimensional semisimple CohFT
$\Omega^{\lambda(u)}$.
The Givental--Teleman classification for $\Omega^{\lambda(u)}$ yields
the required canonical expression for $\lambda(u)$.

\begin{proposition}
  \label{prop:3-spin}
  Pixton's 3-spin relations on the moduli space of stable curves imply
  the vanishing $\lambda_i = 0$ for $i > g$.
  More precisely, as elements of the strata algebra, we may write
  $\lambda_i$ as a linear combination of Pixton's 3-spin relations.
\end{proposition}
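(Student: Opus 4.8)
The plan is to obtain the relation from the Givental--Teleman classification of the semisimple CohFT $\Omega^{\lambda(u)}$ and then to match the resulting tautological relation against Pixton's 3-spin relations, both being governed by the same Bernoulli $R$-matrix. First I would record that $\Omega^{\lambda(u)}$ is semisimple: on $\overline{\mathcal{M}}_{0,3}$ the Hodge bundle has rank $0$, so its genus-zero three-point class is $\lambda(u)|_{\overline{\mathcal{M}}_{0,3}}=1$, giving the trivial one-dimensional Frobenius algebra with its (nondegenerate) unit pairing. The Givental--Teleman reconstruction therefore writes $\Omega^{\lambda(u)}=\widehat{R}\,\omega^{\mathrm{triv}}$, and comparison with \eqref{eqn:quan-ZE-Z} under the substitution \eqref{eqn:tran-s-u} identifies the $R$-matrix as the scalar Bernoulli series $R(z)=\exp\left(\sum_{i\geq1}\frac{B_{2i}}{(2i)(2i-1)}u^{2i-1}z^{2i-1}\right)$ already appearing in Theorem~\ref{thm:Hodge-class-relation}. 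Unwinding the reconstruction graph sum expresses $\lambda(u)$ canonically as an element of the strata algebra, namely a sum over stable graphs with vertices carrying the topological part and with half-edges and legs decorated by $R$ and $R^{-1}$; this is the canonical expression for $\lambda_i$ referred to in the statement.

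The second step is to read off the relation. Since $\mathbb{E}$ has rank $g$, the class $\lambda_i$ vanishes in $H^*(\overline{\mathcal{M}}_{g,n})$ for every $i>g$. Extracting the coefficient of $u^i$ from the strata-algebra expression produced above therefore yields, for each $i>g$, an explicit element of the strata algebra mapping to zero in cohomology, i.e. a tautological relation; this is exactly the $i>g$ part of Theorem~\ref{thm:Hodge-class-relation}, now realized on the level of the strata algebra.

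The central step is to locate this relation inside the span of Pixton's 3-spin relations. By \cite{pandharipande2015relations} the 3-spin relations arise by the very same mechanism: one applies the Givental--Teleman reconstruction to a shift of Witten's $3$-spin class, which is concentrated in a single cohomological degree, and the vanishing of every higher-degree term of the reconstruction furnishes relations in the strata algebra. By Pixton's polynomiality in the level $r$ these extend to relations for all $r$, and the $R$-matrix governing the associated Chiodo classes is, along its relevant diagonal component, precisely the Bernoulli series $R(z)$ above. I would make this identification explicit, matching the coefficients $B_{2i}/((2i)(2i-1))$ of the Hodge $R$-matrix with the corresponding entries of the $r$-spin $R$-matrix in the appropriate specialization, and then translate the $\kappa$- and $\psi$-decorated graph sum for $\lambda(u)$ term by term into the graph sums defining the 3-spin relations. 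Because both sides are built from the same $R$-matrix acting on a one-dimensional topological field theory, this translation expresses the degree-$(>g)$ vanishing of $\lambda(u)$ as a finite linear combination of 3-spin relations, which in turn shows these relations imply $\lambda_i=0$ for $i>g$.

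The hard part will be this last matching: one must verify that the Bernoulli $R$-matrix is genuinely realized as the relevant specialization of the $r$-spin $R$-matrix, so that the Hodge relation lies in the actual $\mathbb{C}$-span of Pixton's relations rather than merely being of the same structural type. This requires controlling the translation of the topological unit vector and the $\kappa$-class contributions between the two graph sums, as well as the reduction from the $r$-spin state space to the one-dimensional Hodge state space. Once the $R$-matrices are matched, the remaining bookkeeping is routine Givental graph-sum combinatorics.
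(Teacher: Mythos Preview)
Your overall architecture is reasonable, but the crucial third step has a genuine gap, and the paper closes it by a different mechanism than the one you sketch.

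The difficulty is in your claim that the Bernoulli series $R(z)$ is ``the relevant diagonal component'' of the $R$-matrix governing Pixton's 3-spin relations. Pixton's 3-spin relations arise from the Givental--Teleman reconstruction of Witten's $3$-spin (equivalently $A_2$) CohFT, whose $R$-matrix is a $2\times 2$ matrix built from Airy-type asymptotics, not a one-dimensional Bernoulli exponential. There is no evident specialization of parameters that collapses the $A_2$ $R$-matrix onto your scalar Hodge $R$-matrix, and your reference to ``Pixton's polynomiality in the level $r$'' and to ``Chiodo classes'' suggests you may be conflating two different families: Pixton's 3-spin tautological relations on the one hand, and Pixton's $r$-dependent DR/Chiodo formulas on the other. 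The latter indeed feature Bernoulli polynomials, but they are not the relations in question here. So the ``hard part'' you flag is not merely bookkeeping; as stated, the matching does not go through.

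The paper avoids this obstacle by passing through an auxiliary two-dimensional CohFT: the degree-zero $\mathbb{C}^*$-equivariant Gromov--Witten theory of $\mathbb{P}^1$. Two facts make this work. First, by an explicit localization computation, $\Omega^{\mathbb{P}^1}$ splits as a direct sum of two copies of the Hodge CohFT $\Omega^{\lambda(u)}$ (with $u=\mu^{-1}$ and $u=(-\mu)^{-1}$), and correspondingly its Givental--Teleman $R$-matrix is block-diagonal with the Bernoulli series on the diagonal. Second, the results of \cite{janda2018frobenius} (specifically \S3.8 there) already establish that the relations obtained from the reconstruction of $\Omega^{\mathbb{P}^1}$ --- namely the polar part in the equivariant parameter $\mu$ --- lie in the span of Pixton's 3-spin relations. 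Combining these, the polar part of the $\mathbb{P}^1$ reconstruction, evaluated on the insertion $([0],1,\dots,1)$, is exactly the degree $>g$ part of the Hodge strata-algebra expression, and hence the latter is a linear combination of 3-spin relations. In short, the paper does not match $R$-matrices directly against the $A_2$ theory; it embeds the Hodge CohFT into a geometric CohFT for which the comparison with 3-spin has already been carried out, and then reads off the desired statement.
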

\begin{proof}
  We use the results of \cite{janda2018frobenius}, which show that a
  large class of tautological relations from cohomological field
  theories are linear combinations of the 3-spin relations.
  In particular, the discussion of
  \cite[Section~3.8]{janda2018frobenius} applies to the CohFT
  $\Omega^{\PP^1}$ of the degree zero $\CC^*$-equivariant
  Gromov--Witten theory of $\PP^1$.
  We first analyze this CohFT, then relate it to the Hodge CohFT, and
  finally study their tautological relations.

  The state space of $\Omega^{\PP^1}$ is the equivariant cohomology
  \begin{equation*}
    H^*_{\CC^*}(\PP^1) \cong \CC[\mu][H]/(H(H - \mu)) \cong \CC[\mu]\langle[0], [\infty]\rangle
  \end{equation*}
  viewed as a $2$-dimensional free module over the base ring $\CC[\mu]$.
  Here, $[0] = H$ and $[\infty] = H - \mu$ are the cohomology classes of
  the points $0$ and $\infty$ in $\PP^1$, respectively.
  Then $e_0 = [0]/\mu_0$ and $e_\infty = [\infty]/\mu_\infty$ for
  $\mu_0 = \mu$ and $\mu_\infty = -\mu$ form a basis of orthogonal
  idempotents.
  In particular, this CohFT is semisimple.
  In the basis $e_0$, $e_\infty$, the pairing is given by the matrix
  \begin{equation*}
    \begin{pmatrix}
      1/\mu_0 & 0 \\
      0 & 1/\mu_\infty
    \end{pmatrix}.
  \end{equation*}
  Note that there is the formula
  \begin{equation*}
    \Omega^{\PP^1}_{g, n}(\phi_1, \dotsc, \phi_n)
    = \sum_{i = 0}^g \lambda_i \cdot\int_{\PP^1} \prod_j \phi_j \cdot ([0] + [\infty])^{g - i}
  \end{equation*}
  for $\phi_1, \dotsc, \phi_n \in H^*_{\CC^*}(\PP^1)$.
  If we decompose $\phi_i = \phi_{i0} e_0 + \phi_{i\infty} e_\infty$,
  we may rewrite this as
  \begin{equation*}
    \Omega^{\PP^1}_{g, n}(\phi_1, \dotsc, \phi_n)
    = \sum_{\alpha \in \{0, \infty\}} \mu_\alpha^{g - 1} \prod_j \phi_{j\alpha} \cdot \sum_{i = 0}^g \lambda_i \mu_\alpha^{-i}
    = \sum_{\alpha \in \{0, \infty\}} \mu_\alpha^{g - 1} \Omega^{\lambda(\mu_\alpha^{-1})}_{g, n}(\phi_{1\alpha}, \dots, \phi_{n\alpha}).
  \end{equation*}
  Thus, $\Omega^{\PP^1}$ is a direct sum of two Hodge CohFTs.

  There is a Givental--Teleman reconstruction
  \begin{equation}
    \label{eq:Hodge-GT}
    \Omega^{\lambda(u)} = R . \omega,
  \end{equation}
  where
  \begin{equation*}
    R(z) = \exp\left(\sum_{i=1}^{\infty}\frac{B_{2i}}{(2i)(2i-1)}u^{2i-1}z^{2i-1}\right)
  \end{equation*}
  and $\omega_{g, n}(a_1, \dotsc, a_n) = a_1 \cdot \dotsb \cdot a_n$
  for $a_1, \dotsc, a_n \in \CC$.
  For $\Omega^{\PP^1}$, the Givental--Teleman reconstruction
  \begin{equation}
    \label{eq:P1-GT}
    \Omega^{\PP^1}_{g, n}(\phi_1, \dotsc, \phi_n) = R^{\PP^1} . \omega^{\PP^1}_{g, n}(\phi_1, \dotsc, \phi_n)
  \end{equation}
  respects the direct sum structure, with $R$-matrix
  \begin{equation*}
    R^{\PP^1}(z) =
    \begin{pmatrix}
      R(z)|_{u = \mu_0^{-1}} & 0 \\
      0 & R(z)|_{u = \mu_\infty^{-1}},
    \end{pmatrix}
  \end{equation*}
  if written in the basis $\{e_0, e_\infty\}$, and topological field
  theory defined via
  \begin{equation*}
    \omega^{\PP^1}_{g, n}(\phi_1, \dotsc, \phi_n) = \sum_{\alpha \in \{0, \infty\}} \mu_\alpha^{g - 1} \omega_{g, n}(\phi_{1\alpha}, \dotsc, \phi_{n\alpha})
  \end{equation*}

  The results of \cite[Section~3.8]{janda2018frobenius} imply that for
  any choice of $\phi_1, \dotsc, \phi_n$ (well-defined in the
  non-equivariant limit $\mu \to 0$), the polar terms in $\mu$ on the
  right hand side of \eqref{eq:P1-GT} are a linear combination of
  Pixton's 3-spin relations.
  If $n > 0$, note that because of the diagonal form of the pairing
  and $R^{\PP^1}$, we have
  \begin{equation*}
    \Omega^{\PP^1}_{g, n}([0], 1, 1, \dotsc, 1)
    = \mu^g \left((R . \omega)(1,\dots,1)\right)|_{u = \mu^{-1}},
  \end{equation*}
  and thus the polar terms correspond exactly to the terms in
  \eqref{eq:Hodge-GT} of degree $i > g$.
  This establishes the proposition in the case $n > 0$.
  
  The case $n = 0$ follows from the case $n = 1$ via the dilaton
  equation, and the fact that Pixton's relations are stable under
  pushing-forward via forgetful morphisms.
\end{proof}
\begin{remark}
  The result of Proposition~\ref{prop:3-spin} immediately implies that
  the differential equations for ancestor Gromov--Witten invariants
  from the vanishing $\lambda_i = 0$ for $i > g$ are linear
  combinations of those for Pixton's 3-spin relations.
  We expect a comptability between the Givental--Teleman
  reconstruction, quantization of quadratic Hamiltonian, and the
  descendant/ancestor correspondence, however we were not able to find
  a reference for this fact.
  This compatibility would imply that in a strong sense, the $i > g$
  part of \eqref{eqn:Hodge-class-relation} is a consequence of
  Pixton's 3-spin relations.
\end{remark}
\begin{remark}
  Given that Pixton's 3-spin relations are expected to be a complete
  set of relations in the tautological ring, we also expect the $i=g$
  part of \eqref{eqn:Hodge-class-relation} to also be a consequence of
  Pixton's 3-spin relations.
  However, we do not know how to prove this.
  One difficulty is that the two sides of
  \eqref{eqn:Hodge-class-relation} have very different origins.
  On a basic level, the left hand side of
  \eqref{eqn:Hodge-class-relation} comes from the
  Grothendieck--Riemann--Roch computation of the $\lambda_g$-class,
  while the right hand side comes from relative Gromov--Witten theory.
\end{remark}

~\;

\noindent {\bf  Acknowledgements.}
The authors would like to thank the anonymous referees for careful reading of the manuscript and for the many
helpful suggestions and comments.
The first author was partially supported by NSF grants DMS-2054830 and DMS-2239320.
The second author was partially supported by  National Key R\&D Program of
China (2022YFA1006200), National
Science Foundation of China (Grant No. 12071255) and Shandong Provincial Natural Science Foundation
(Grant No. ZR2021MA101). 
\hspace*{\fill}

\noindent{\bf Declarations}

\hspace*{\fill}

\noindent{\bf Conflict of interest} The authors declared that they have no conflicts of interest to this work.

\bibliographystyle{alpha}
\bibliography{references}

\begin{thebibliography}{CJWZ23}

\bibitem[CG07]{coates2007quantum}
Tom Coates and Alexander Givental.
\newblock Quantum {R}iemann-{R}och, {L}efschetz and {S}erre.
\newblock {\em Ann. of Math. (2)}, 165(1):15--53, 2007.

\bibitem[CGT15]{coates2015virasoro}
Tom Coates, Alexander Givental, and Hsian-Hua Tseng.
\newblock Virasoro constraints for toric bundles.
\newblock {\em arXiv preprint arXiv:1508.06282}, 2015.

\bibitem[CJWZ23]{clader2022topological}
Emily Clader, Felix Janda, Xin Wang, and Dmitry Zakharov.
\newblock Topological recursion relations from {P}ixton's formula.
\newblock {\em Michigan Math. J.}, 73(2):227--241, 2023.

\bibitem[CPS18]{clader2018geometric}
Emily Clader, Nathan Priddis, and Mark Shoemaker.
\newblock Geometric quantization with applications to {G}romov-{W}itten theory.
\newblock In {\em B-model {G}romov-{W}itten theory}, Trends Math., pages
  399--462. Birkh\"{a}user/Springer, Cham, 2018.

\bibitem[CR02]{chen2001orbifold}
Weimin Chen and Yongbin Ruan.
\newblock Orbifold {G}romov-{W}itten theory.
\newblock In {\em Orbifolds in mathematics and physics ({M}adison, {WI},
  2001)}, volume 310 of {\em Contemp. Math.}, pages 25--85. Amer. Math. Soc.,
  Providence, RI, 2002.

\bibitem[EHX97]{eguchi1997quantum}
Tohru Eguchi, Kentaro Hori, and Chuan-Sheng Xiong.
\newblock Quantum cohomology and {V}irasoro algebra.
\newblock {\em Phys. Lett. B}, 402(1-2):71--80, 1997.

\bibitem[FJR13]{fan2013witten}
Huijun Fan, Tyler Jarvis, and Yongbin Ruan.
\newblock The {W}itten equation, mirror symmetry, and quantum singularity
  theory.
\newblock {\em Ann. of Math. (2)}, 178(1):1--106, 2013.

\bibitem[FP00a]{faber2000hodge}
Carel Faber and Rahul Pandharipande.
\newblock Hodge integrals and {G}romov-{W}itten theory.
\newblock {\em Invent. Math.}, 139(1):173--199, 2000.

\bibitem[FP00b]{faber2000logarithmic}
Carel Faber and Rahul Pandharipande.
\newblock Logarithmic series and {H}odge integrals in the tautological ring.
\newblock {\em Michigan Math. J.}, 48:215--252, 2000.
\newblock With an appendix by Don Zagier, Dedicated to William Fulton on the
  occasion of his 60th birthday.

\bibitem[Giv01]{givental2001gromov}
Alexander Givental.
\newblock Gromov-{W}itten invariants and quantization of quadratic
  {H}amiltonians.
\newblock {\em Mosc. Math. J.}, 1(4):551--568, 645, 2001.
\newblock Dedicated to the memory of I. G. Petrovskii on the occasion of his
  100th anniversary.

\bibitem[Giv04]{givental2004symplectic}
Alexander Givental.
\newblock Symplectic geometry of {F}robenius structures.
\newblock In {\em Frobenius manifolds}, volume E36 of {\em Aspects Math.},
  pages 91--112. Friedr. Vieweg, Wiesbaden, 2004.

\bibitem[Jan17]{Ja17}
Felix Janda.
\newblock Relations on $\overline{M}_{g,n}$ via equivariant {G}romov-{W}itten
  theory of $\mathbb{P}^1$.
\newblock {\em Algebr. Geom.}, 4(3):311--336, 2017.

\bibitem[Jan18]{janda2018frobenius}
Felix Janda.
\newblock Frobenius manifolds near the discriminant and relations in the
  tautological ring.
\newblock {\em Letters in Mathematical Physics}, 108(7):1649--1675, 2018.

\bibitem[JPPZ17]{janda2017double}
Felix Janda, Rahul Pandharipande, Aaron Pixton, and Dimitri Zvonkine.
\newblock Double ramification cycles on the moduli spaces of curves.
\newblock {\em Publications math{\'e}matiques de l'IH{\'E}S}, 125(1):221--266,
  2017.

\bibitem[KM97]{kontsevich1994gromov}
Maxim Kontsevich and Yuri Manin.
\newblock Gromov-{W}itten classes, quantum cohomology, and enumerative
  geometry.
\newblock In {\em Mirror symmetry, {II}}, volume~1 of {\em AMS/IP Stud. Adv.
  Math.}, pages 607--653. Amer. Math. Soc., Providence, RI, 1997.

\bibitem[Liu02]{liu2002quantum}
Xiaobo Liu.
\newblock Quantum product on the big phase space and the {V}irasoro conjecture.
\newblock {\em Adv. Math.}, 169(2):313--375, 2002.

\bibitem[Liu11]{liu2011certain}
Xiaobo Liu.
\newblock On certain vanishing identities for {G}romov-{W}itten invariants.
\newblock {\em Trans. Amer. Math. Soc.}, 363(6):2939--2953, 2011.

\bibitem[LP11]{liu2011new}
Xiaobo Liu and Rahul Pandharipande.
\newblock New topological recursion relations.
\newblock {\em Journal of Algebraic Geometry}, 20(3):479--494, 2011.

\bibitem[LX09]{liu2009proof}
Kefeng Liu and Hao Xu.
\newblock A proof of the {F}aber intersection number conjecture.
\newblock {\em J. Differential Geom.}, 83(2):313--335, 2009.

\bibitem[Mum83]{mumford1983towards}
David Mumford.
\newblock Towards an enumerative geometry of the moduli space of curves.
\newblock In {\em Arithmetic and geometry, {V}ol. {II}}, volume~36 of {\em
  Progr. Math.}, pages 271--328. Birkh\"auser Boston, Boston, MA, 1983.

\bibitem[Pan18]{pandharipande2015calculus}
Rahul Pandharipande.
\newblock A calculus for the moduli space of curves.
\newblock In {\em Algebraic geometry: {S}alt {L}ake {C}ity 2015}, volume 97.1
  of {\em Proc. Sympos. Pure Math.}, pages 459--487. Amer. Math. Soc.,
  Providence, RI, 2018.

\bibitem[Pix12]{pixton2012conjectural}
Aaron Pixton.
\newblock Conjectural relations in the tautological ring of
  $\overline{M}_{g,n}$.
\newblock {\em arXiv preprint arXiv:1207.1918}, 2012.

\bibitem[PPZ15]{pandharipande2015relations}
Rahul Pandharipande, Aaron Pixton, and Dimitri Zvonkine.
\newblock Relations on $\overline{M}_{g,n}$ via 3-spin structures.
\newblock {\em Journal of the American Mathematical Society}, 28(1):279--309,
  2015.

\end{thebibliography}

\end{document}